\numberwithin{equation}{section}
\numberwithin{figure}{section}
\theoremstyle{plain}
\newtheorem{thm}{\protect\theoremname}[section]
  \theoremstyle{plain}
  \newtheorem{prop}[thm]{\protect\propositionname}
  \theoremstyle{remark}
  \newtheorem{rem}[thm]{\protect\remarkname}
  \theoremstyle{plain}
  \newtheorem{lem}[thm]{\protect\lemmaname}
  \theoremstyle{definition}
  \newtheorem{defn}[thm]{\protect\definitionname}
  \theoremstyle{plain}
  \newtheorem{conjecture}[thm]{\protect\conjecturename}
\newcounter{myparagraph}[subsection]
\newcommand{\myparagraph}{\refstepcounter{myparagraph}}
\renewcommand{\themyparagraph}{{\arabic{section}.\arabic{subsection}.\alph{myparagraph}}}
\newcommand*{\para}[1]{\vskip0.3cm\noindent\hspace{-3pt}\myparagraph{\bf (\themyparagraph){ {#1.}}}}
 \newtheorem{cla}[thm]{Claim}
  \providecommand{\conjecturename}{Conjecture}
  \providecommand{\definitionname}{Definition}
  \providecommand{\lemmaname}{Lemma}
  \providecommand{\propositionname}{Proposition}
  \providecommand{\remarkname}{Remark}
\providecommand{\theoremname}{Theorem}
\begin{document}

\global\long\def\Ssp{Z_{2}^{sp}}
\global\long\def\Usp{\tilde{X}_{2}^{sp}}
\global\long\def\Xsp{\tilde{X}_{0}^{sp}}

\global\long\def\Sspp{Z_{1}^{sp}}
\global\long\def\Uspp{\tilde{X}_{1}^{sp}}

\global\long\def\Hsp{\tilde{X}_{0}^{sp,\sharp}}

\global\long\def\cXsp{\tilde{X}_{0}^{*}}

\global\long\def\Xspi{\tilde{X}_{0}^{sp,(1)}}
\global\long\def\Xspii{\tilde{X}_{0}^{sp,(2)}}
\global\long\def\Xspiii{\tilde{X}_{0}^{sp,(3)}}

\global\long\def\cvarphi{\check{\varphi}}
 \global\long\def\crho{\check{\rho}}

\global\long\def\rG{\mathrm{G}}

\global\long\def\sE{\mathcal{E}}
\global\long\def\sK{\mathcal{K}}

\global\long\def\sQ{\mathcal{Q}}

\global\long\def\sO{\mathcal{O}}

\global\long\def\mC{\mathbb{C}}

\global\long\def\mP{\mathbb{P}}

\global\long\def\mR{\mathbb{R}}

\global\long\def\mQ{\mathbb{Q}}

\global\long\def\mZ{\mathbb{Z}}

\global\long\def\Hom{\mathrm{Hom}}

\global\long\def\sZ{\mathcal{Z}}

\global\long\def\tx{{\tt x}}
\global\long\def\ty{{\tt y}}
\global\long\def\tz{{\tt z}}

\title{K3 surfaces from configurations of six lines in $\mathbb{P}^{2}$
and mirror symmetry I}

\author{Shinobu Hosono, Bong H. Lian, Hiromichi Takagi and Shing-Tung Yau}
\begin{abstract}
From the viewpoint of mirror symmetry, we revisit the hypergeometric
system $E(3,6)$ for a family of K3 surfaces. We construct a good
resolution of the Baily-Borel-Satake compactification of its parameter
space, which admits special boundary points (LCSLs) given by normal
crossing divisors. We find local isomorphisms between the $E(3,6)$
systems and the associated GKZ systems defined locally on the parameter
space and cover the entire parameter space. Parallel structures are
conjectured in general for hypergeometric system $E(n,m)$ on Grassmannians.
Local solutions and mirror symmetry will be described in a companion
paper \cite{HLTYpartII}, where we introduce a K3 analogue of the
elliptic lambda function in terms of genus two theta functions. 
\end{abstract}

\maketitle
{\small{}\tableofcontents{}}{\small \par}

\section{\textbf{\textup{Introduction}}}

Consider double covers of $\mathbb{P}^{1}$ branched along four points
in general positions. They define a family of elliptic curves called
the Legendre family over the moduli space of the configurations of
four points on $\mathbb{P}^{1}$, which are naturally parametrized
by the cross ratio of the four points. It is a classical fact that
the elliptic lambda function is defined as a modular function that
arises from the hypergeometric series representing period integrals
for the Legendre family. 

Higher dimensional analogues of the Legendre family have been studied
in many context in the history of modular forms and analysis related
to them. Among others, Matsumoto, Sasaki and Yoshida \cite{YoshidaEtal}
have studied extensively in the 90's the two dimensional generalization
of the Legendre family, i.e., the double covers of the projective
plane $\mathbb{P}^{2}$ branched along six lines in general positions.
After making suitable resolutions, the double covers define a family
of smooth K3 surfaces parametrized by the configurations of six lines.
In \cite{YoshidaEtal}, the authors studied in great details the period
integrals of the family and determined the monodromy properties of
the period integrals completely. They described the set of the differential
equations satisfied by the period integrals in terms of the so-called
Aomoto-Gel'fand system \cite{Ao,GelfandGraev,GelfandGelfand} on Grassmannians
$G(3,6)$, and named them hypergeometric system $E(3,6)$. 

Around the same time in the 90's, period integrals for families of
Calabi-Yau manifolds were studied intensively to verify several predictions
from mirror symmetry of Calabi-Yau manifolds. For Calabi-Yau manifolds
given as complete intersections in a toric variety, it is now known
that the period integrals for such a family are solutions to a hypergeometric
system called Gel'fand-Kapranov-Zelevinski (GKZ) system. In particular,
it was shown in \cite{HLY,HKTY} that for GKZ systems in this context
there exist special boundary points called large complex structure
limits (LCSLs), and mirror symmetry appears nicely in the form of
\textit{generalized} Frobenius method which provides a closed formula
for period integrals and mirror map near these boundary points. 

In this paper, we will revisit the hypergeometric system $E(3,6)$
from the viewpoint of mirror symmetry of K3 surfaces. Despite the
fact that many analytic properties of $E(3,6)$ have been studied
in details in the literature e.g. \cite{YoshidaEtal,Sekiguchi-Taka},
it was not clear how to construct the degeneration points (LCSLs)
in the parameter space of $E(3,6)$. We will find that the $\mathcal{D}$-module
associated to the hypergeometric system $E(3,6)$ over its parameter
space is locally trivialized by the $\mathcal{D}$-module of the corresponding
GKZ hypergeometric system (\textbf{Theorem \ref{thm:Thm1}}). Thanks
to this general property, it turns out that the techniques developed
in \cite{HLY,HKTY} for GKZ systems can be applied to $E(3,6)$ (\textbf{Theorem
\ref{thm:Thm2}}); this includes the existence of the degeneration
points and the closed formula of the period integrals around them.
To show our results, we first cover the parameter space of $E(3,6)$,
which can be identified with the Baily-Borel-Satake compactification
of the family of the K3 surfaces, by certain Zariski open subsets
of toric varieties on which GKZ systems are defined. Using this covering
property, we finally show that there are two nice algebraic resolutions
of the Baily-Borel-Satake compactification (\textbf{Theorem \ref{thm:Resolutions}})
which are related by a four dimensional flip. 

Around the special degeneration points (LCSLs), following \cite{HLY,HKTY},
we can define the so-called mirror maps. In our case, these mirror
maps can be regarded as two dimensional generalizations of the elliptic
lambda function. We will call them \textit{$\lambda_{K3}$-functions}.
In a companion paper \cite{HLTYpartII}, we will describe the $\lambda_{K3}$-functions
in terms of genus two theta functions. Moreover, we will find that,
corresponding to the two different algebraic resolutions related by
a flip, there exist two different definitions for the $\lambda_{K3}$-functions. 

\vskip0.2cm

Here is the outline of this paper: In Section 2, after introducing
our family of K3 surfaces and the hypergeometric system $E(3,6)$
satisfied by period integrals, we will introduce the configuration
space of six ordered points as the parameter space of $E(3,6)$. We
summarize known properties about the compactification of the parameter
space of $E(3,6)$ and also introduce other closely related parameter
spaces: the configuration space of 3 points and 3 lines in $\mathbb{P}^{2}$
and the parameter space of the GKZ system which trivializes the $E(3,6)$.
In Section 3, we describe a toric compactification of the parameter
space of this GKZ system, and construct the expected LCSLs after making
a resolution. In Section 4, we observe that the configuration space
of 3 points and 3 lines in $\mathbb{P}^{2}$ arises naturally from
certain residue calculations of a period integral. We find that the
toric compactification for the GKZ system gives a toric partial resolution
of the GIT compactification of the configuration space of 3 points
and 3 lines in $\mathbb{P}^{2}$. In Section 5 and 6, we reconstruct
the partial resolution using classical projective geometry. Transforming
this partial resolution (locally) by certain birational map to the
Baily-Borel-Satake compactification, we construct the desired algebraic
resolutions of the Baily-Borel-Satake compactification. In Section
7, we combine the results of the preceding sections and rephrase them
in the language of $\mathcal{D}$-modules to state the main results
of this paper. We also formulate conjectural generalizations of our
results. 

\vskip0.3cm

\noindent\textbf{Acknowledgements:} This project was initiated by
a question by Naichung C. Leung about hypergeometric systems on Grassmannians
to S.H. We are grateful to him for asking the question which actually
has drawn our attention to the problems left unsolved in the 90's.
We are also grateful to Osamu Fujino for useful advice to the proof
of Claim \ref{cla:flip}. S.H. would like to thank for the warm hospitality
at the CMSA at Harvard University where progress was made. This work
is supported in part by  Grant-in Aid Scientific Research (C 16K05105,
S 17H06127, A 18H03668 S.H. and C 16K05090 H.T.). B.H.L and S.-T.
Yau are supported by the Simons Collaboration Grant on Homological
Mirror Symmetry and Applications 2015--2019.

~

~

\section{\textbf{\textup{The hypergeometric system $E(3,6)$}} }

\subsection{Double covering of $\mathbb{P}^{2}$ branched along six lines }

Let us consider six lines $\ell_{i}(i=1,..,6)$ in $\mathbb{P}^{2}$
in general position. We denote them by $\left\{ \ell_{i}(\tx,\ty,\tz)=0\right\} \subset\mathbb{P}^{2}$
with the following linear forms: 
\[
\ell_{i}(\tx,\ty,\tz):=a_{0i}\tz+a_{1i}\tx+a_{2i}\ty\;\;(i=1,...,6).
\]
When the lines are in general position, the double cover branched
along the six lines defines a singular K3 surface with $A_{1}$ singularity
at each 15 intersection points $P_{ij}:=\ell_{i}\cap\ell_{j}$. Blowing-up
the 15 $A_{1}$ singularities, we have a smooth K3 surface $X$ of
Picard number 16 generated by the hyperplane class $H$ from $\mathbb{P}^{2}$
and the $-2$ curves of the exceptional divisors $E_{ij}$ from the
blow-up. The configurations of six lines define a four dimensional
family of K3 surfaces,\textcolor{red}{{} }which we will call double
cover family of K3 surfaces for short in this paper. The period integrals
of the family of holomorphic two forms and their monodromy properties
were studied extensively in \cite{YoshidaEtal} by analyzing the hypergeometric
system $E(3,6)$. We will revisit the system $E(3,6)$ from the viewpoint
of mirror symmetry and provide a new perspective for mirror symmetry.

\subsection{Period integrals of $X$}

Recall that the Legendre family consists of elliptic curves given
by double covers of $\mathbb{P}^{1}$ branched along four points in
general position. The double cover family of K3 surfaces is a natural
generalization of the Legendre family. Analogous to the period integrals
of the Legendre family \cite[Chap.IV,10]{YoshidaBook} are the period
integrals of a holomorphic two form: 
\begin{equation}
\bar{\omega}_{C}(a)=\int_{C}\frac{d\mu}{\sqrt{\prod_{i=1}^{6}\ell_{i}(\tx,\ty,\tz)}},\label{eq:peridIntI}
\end{equation}
where $d\mu=\tx d\ty\wedge d\tz-\ty d\tx\wedge d\tz+\tz d\tx\wedge d\ty$
and $C$ is an integral (transcendental) cycle in $H_{2}(X,\mathbb{Z}$).
Explicit descriptions of the transcendental cycles can be found in
\cite{YoshidaEtal}. Also the lattice of transcendental cycles is
determined to be 
\[
T_{X}\simeq U(2)\oplus U(2)\oplus A_{1}\oplus A_{1},
\]
where $\mbox{\ensuremath{U}(2)}$ represents the hyperbolic lattice
$U$ of rank 2 with the Gram matrix multiplied by 2, and $A_{1}=\langle-2\rangle$
is the root lattice of $sl(2,\mathbb{C})$. As obvious in the above
definition, the period integrals $\bar{\omega}_{C}(a)$ determine
(multi-valued) functions defined on the set of $3\times6$ matrices
$A$ representing (ordered) six lines in general positions. Explicitly,
we describe the matrices $A$ by 
\begin{equation}
A=\left(\begin{matrix}a_{01} & a_{02} & a_{03} & a_{04} & a_{05} & a_{06}\\
a_{11} & a_{12} & a_{13} & a_{14} & a_{15} & a_{16}\\
a_{21} & a_{22} & a_{23} & a_{24} & a_{25} & a_{26}
\end{matrix}\right).\label{eq:matrix-A}
\end{equation}
Let $M_{3,6}$ be the affine space of all $3\times6$ matrices, and
set 
\[
M_{3,6}^{o}:=\left\{ A\in M_{3,6}\mid D(i_{1},i_{2},i_{3})\not=0\,(1\leq i_{1}<i_{2}<i_{3}\leq6)\right\} 
\]
 with $D(i_{1},i_{2},i_{3})$ representing $3\times3$ minors of $A$.
Then, under the genericity assumption, the configurations of six lines
are parametrized by 
\[
P(3,6):=GL(3,\mathbb{C})\diagdown M_{3,6}^{o}\diagup(\mathbb{C}^{*})^{6},
\]
where $(\mathbb{C}^{*})^{6}$ represents the diagonal $\mathbb{C}^{*}$-actions.
The differential operators which annihilate the period integrals define
the hypergeometric system of type $E(3,6)$ \cite[Sect.1.4]{YoshidaEtal},
which is the Aomoto-Gel'fand system on Grassmannian $G(3,6)$ \cite{Ao,GelfandGraev,GelfandGelfand}.
The following proposition is easy to derive.
\begin{prop}
The period integral $\bar{\omega}(a)$ satisfies the following set
of differential equations:
\begin{equation}
\begin{aligned}(\mathrm{i}) & \;\;\sum_{i=0}^{2}a_{ij}\frac{\partial\;}{\partial a_{ij}}\bar{\omega}(a)=-\frac{1}{2}\bar{\omega}(a), & 1\leq j\leq6,\qquad\qquad\qquad\\
(\mathrm{ii}) & \;\;\sum_{j=1}^{6}a_{ij}\frac{\partial\;}{\partial a_{kj}}\bar{\omega}(a)=-\delta_{ik}\bar{\omega}(a) & 1\leq i\leq3,\qquad\qquad\qquad\\
(\mathrm{iii}) & \;\;\frac{\partial^{2}\;}{\partial a_{ij}\partial a_{kl}}\bar{\omega}(a)=\frac{\partial^{2}\;}{\partial a_{il}\partial a_{kj}}\bar{\omega}(a), & \;\;\;1\leq i,k\leq3,\;1\leq j,l\leq6.
\end{aligned}
\label{eq:GelfandEq}
\end{equation}
\end{prop}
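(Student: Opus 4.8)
The plan is to derive all three families of equations as differential consequences of the symmetries of the integral representation (\ref{eq:peridIntI}), so the first task is to license differentiation under the integral sign. The integrand $\Phi(x;A):=\bigl(\prod_{i=1}^{6}\ell_{i}(\tx,\ty,\tz)\bigr)^{-1/2}$ is holomorphic in the entries $a_{ij}$ away from the branch divisors $\{\ell_{i}=0\}$, and on the resolved smooth K3 surface $X$ the cycle $C\in H_{2}(X,\mZ)$ can be transported to a locally constant family as $A$ varies, the branch configuration depending holomorphically on $A$. Since the blow-up of the $A_{1}$ points makes the integral locally absolutely convergent with estimates uniform on compact sets in $A$, each $\partial/\partial a_{ij}$ may be passed inside $\int_{C}$. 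Granting this, (\ref{eq:GelfandEq}) reduces to three pointwise/symmetry statements about $\Phi$.

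Equations (i) and (iii) are then immediate. For (i), rescaling the $j$-th column by $t$ sends $\ell_{j}\mapsto t\,\ell_{j}$ and leaves the other factors fixed, so $\prod_{i}\ell_{i}\mapsto t\prod_{i}\ell_{i}$ and $\bar{\omega}$ is homogeneous of degree $-\tfrac12$ in each column; differentiating this homogeneity in $t$ at $t=1$ produces exactly the Euler identity (i). For (iii), I would differentiate $\Phi$ directly. Using $\partial\Phi/\partial a_{ij}=-\tfrac12\,\ell_{j}^{-1}x_{i}\Phi$, a second differentiation with $j\neq l$ gives
\[
\frac{\partial^{2}\Phi}{\partial a_{ij}\,\partial a_{kl}}=\tfrac14\,\ell_{j}^{-1}\ell_{l}^{-1}x_{i}x_{k}\,\Phi,
\]
which is manifestly invariant under exchanging the column labels $j\leftrightarrow l$ while keeping the row labels $i,k$ fixed; this is the integrand form of (iii), and the case $j=l$ is vacuous. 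Integrating over $C$ yields (iii).

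The genuinely structural point is (ii), which I would obtain from $GL(3,\mC)$-covariance rather than from a pointwise identity. For $g\in GL(3,\mC)$ one has $\ell_{j}(x;gA)=\ell_{j}(g^{\top}x;A)$, so substituting $y=g^{\top}x$ in (\ref{eq:peridIntI}) and using that $d\mu$ is the contraction of the Euler vector field with $d\tx\wedge d\ty\wedge d\tz$—hence rescales by the determinant of the linear substitution—gives $\bar{\omega}(gA)=(\det g)^{-1}\bar{\omega}(A)$, the transported cycle being homotopic to $C$ for $g$ near the identity. Taking $g=I+\epsilon E_{ki}$ (the elementary matrix with a $1$ in position $(k,i)$) and differentiating at $\epsilon=0$ turns the left-hand side into $\sum_{j}a_{ij}\,\partial\bar{\omega}/\partial a_{kj}$ and the right-hand side into $-\operatorname{tr}(E_{ki})\,\bar{\omega}=-\delta_{ik}\,\bar{\omega}$, which is precisely (ii).

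The main obstacle is not the algebra but the analytic justification underlying every step: interchanging $\partial_{a_{ij}}$ with $\int_{C}$ uniformly near the moving branch locus, and the claim in (ii) that the cycle $g^{\top}$-transported back from $y$ to $x$ is homotopic to the original $C$ (i.e. monodromy-freeness under small deformations). Both rest on the smoothness of the resolved family $X$ and the holomorphic dependence of its Hodge structure on $A$; once these are in place, (i)--(iii) follow as above.
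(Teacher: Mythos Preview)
Your argument is correct and follows essentially the same route as the paper: parts (i) and (iii) by direct differentiation of the integrand, and part (ii) by first establishing the covariance $\bar{\omega}(gA)=(\det g)^{-1}\bar{\omega}(A)$ via the invariance of the Euler vector field under linear coordinate changes, then taking its infinitesimal form at $g=I+\epsilon E_{ki}$. The only difference is that you spell out the analytic justifications (differentiation under the integral, cycle transport) that the paper leaves implicit.
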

\begin{proof}
The relations (i) and (iii) are rather easy to verify by differentiating
(\ref{eq:peridIntI}) directly. To derive (ii), we note that $d\mu=i_{E}d\tx\wedge d\ty\wedge d\tz$
holds with the Euler vector field $E=\tx\frac{\partial\;}{\partial\tx}+\ty\frac{\partial\;}{\partial\ty}+\tz\frac{\partial\;}{\partial\tz}$.
Since the Euler vector field is invariant under the linear coordinate
transformation, it is easy to verify 
\[
\bar{\omega}(ga)=(\det g)^{-1}\bar{\omega}(a),
\]
for the left $GL(3,\mathbb{C})$-action on $A=(a_{ij})\in M_{3,6}^{o}$.
The relation follows from the infinitesimal form of this relation. 
\end{proof}
In the paper \cite{YoshidaEtal}, the hypergeometric functions representing
the period integrals has been studied in details using the following
affine coordinate system of the quotient $P(3,6)$:
\[
\left(\begin{matrix}1 & 0 & 0 & 1 & 1 & 1\\
0 & 1 & 0 & 1 & x_{1} & x_{2}\\
0 & 0 & 1 & 1 & x_{3} & x_{4}
\end{matrix}\right).
\]
However, this affine coordinate turns out to be inadequate for studying
mirror symmetry. In particular, in order to construct the special
boundary points, called large complex structure limits (LCSLs), we
need a suitable compactification.

\subsection{Period domain and compactifications of the parameter space $P(3,6)$ }

Mirror symmetry for two or three dimensional Calabi-Yau hypersurfaces
or complete intersections in toric Fano varieties was worked out in
many examples in the 90's by constructing families of Calabi-Yau manifolds
and by studying period integrals associated to holomorphic $n$-forms
for $n=2$ or $3$. It is now known that the geometry of mirror symmetry
appears, in a certain simplified form \cite{SYZ}, near the special
boundary points which are given as normal crossing boundary divisors
in suitable compactifications of the parameter spaces for the families
of hypersurfaces \cite{HLY}. The double cover family of K3 surfaces
does not belong to these well-studied families of Calabi-Yau manifolds.
However, its parameter space $P(3,6)$ admits many nice compactifications
relevant to describe the boundary points. We summarize several compactifications
and describe their relationships.

\para{Period domain ${\mathcal D}_{K3}=\Omega(U(2)^{\oplus2}\oplus A_1^{\oplus2})$}
Since the generic member $X$ of the double cover family of K3 surfaces
has the transcendental lattice $T_{X}\simeq U(2)^{\oplus2}\oplus A_{1}^{\oplus2},$
the period integral defines a map from $P(3,6)$ to the period domain
\[
\mathcal{D}_{K3}:=\left\{ [\omega]\in\mathbb{P}((U(2)^{\oplus2}\oplus A_{1}^{\oplus2})\otimes\mathbb{C})\mid\omega.\omega=0,\omega.\bar{\omega}>0\right\} ^{+},
\]
where $^{+}$ represents one of the connected components. Let us denote
by $G$ the Gram matrix of the lattice $U(2)^{\oplus2}\oplus A_{1}^{\oplus2}$
given in the following block-diagonal from: 
\[
G=\left(\begin{matrix}0 & 2\\
2 & 0
\end{matrix}\right)\oplus\left(\begin{matrix}0 & 2\\
2 & 0
\end{matrix}\right)\oplus(-2)\oplus(-2).
\]
Using this, we define 
\[
\mathcal{G}:=\left\{ g\in PGL(6,\mathbb{Z})\mid\,^{t}gGg=G,H(g)>0\right\} 
\]
with $H(g)=(g_{11}+g_{12})(g_{33}+g_{34})-(g_{13}+g_{14})(g_{31}+g_{32})$,
which is a discrete subgroup of $\mathrm{Aut}(\mathcal{D}_{K3}$)
(see \cite[Sect.1.4]{Matsu93}). In \cite[Prop.2.7.3]{YoshidaEtal},
it is shown that the monodromy group of period integrals coincides
with the congruence subgroup $\mathcal{G}(2)=\left\{ g\in\mathcal{G}\mid g\equiv E_{6}\text{ mod }2\right\} $,
hence $P(3,6)\simeq\mathcal{D}_{K3}/\mathcal{G}(2)$ holds and $\mathcal{D}_{K3}$
gives the unifomization of the multi-valued period integral on the
configuration space $P(3,6)$. 

\para{GIT compactification ${\mathcal M}_6$} \label{para:GIT-Y}
A natural compactification of $P(3,6)$ is given by parameterizing
the six lines $\left\{ \ell_{i}\right\} $ by the corresponding points
$\left\{ \bm{a}_{i}\right\} $ in the dual projective space $\check{\mP}^{2}$
and arrange the corresponding ordered six points as in (\ref{eq:matrix-A})
with $\bm{a}_{i}=\,^{t}(a_{1i},a_{2i},a_{3i})$. The configuration
space of these ordered six points is a well-studied object in geometric
invariant theory. In \cite{DoOrt,Reuv}, one can find that a compactification
$\mathcal{M}_{6}$ is given as a double cover of $\mathbb{P}^{4}$
branched along the so-called Igusa quartic, which has the following
description: 
\begin{equation}
\mathcal{M}_{6}\simeq\left\{ Y_{5}^{2}=F_{4}(Y_{0},...,Y_{4})\right\} \subset\mathbb{P}(1^{5},2),\label{eq:DoubleCover}
\end{equation}
where $F_{4}$ is the quartic polynomial 
\[
F_{4}=(Y_{0}Y_{s}+Y_{2}Y_{3}-Y_{1}Y_{4})^{2}+4Y_{0}Y_{1}Y_{4}Y_{s}
\]
with $Y_{s}:=Y_{0}-Y_{1}+Y_{2}+Y_{3}-Y_{4}$. See Appendix \ref{sec:Appendix-birat-M}.1
for a brief summary. Since $\mathcal{M}_{6}$ is a geometric compactification,
the (multi-valued) period map from $P(3,6)$ to $\mathcal{D}_{K3}$
naturally extends to $\mathcal{M}_{6}$, which we will write $\mathcal{P}:\mathcal{M}_{6}\to\mathcal{D}_{K3}$. 

\para{Baily-Borel-Satake compactifications} In \cite{Matsu93}, it
was shown explicitly that the double cover $\mathcal{M}_{6}$ coincides
with the Baily-Borel-Stake compactification of certain arithmetic
quotient of the symmetric space of type $I_{2,2}$ defined by 
\[
\mathbb{H}_{2}=\left\{ W\in Mat(2,\mathbb{C})\mid(W^{\dagger}-W)/2i>0\right\} ,
\]
where $W^{\dagger}:=\,^{t}\overline{W}$. The Siegel upper half space
of genus two $\frak{h}_{2}$ is contained in $\mathbb{H}_{2}$ as
the locus satisfying $W=\,^{t}W$ . To introduce the arithmetic quotients
of $\mathbb{H}_{2}$, following the notation of \cite{Matsu93}, we
define discrete subgroups of $\mathrm{Aut}(\mathbb{H}_{2})$:
\[
\begin{aligned} & \Gamma:=\left\{ g\in PGL(4,\mathbb{Z}[i])\mid g^{\dagger}Jg=J\right\} ,\;g^{\dagger}:=\,^{t}\bar{g},\,\,J:=\left(\begin{smallmatrix}0 & E_{2}\\
-E_{2} & 0
\end{smallmatrix}\right),\\
 & \Gamma_{T}:=\Gamma\rtimes\langle T\rangle,\quad T:W\mapsto\,^{t}W\,\,(W\in\mathbb{H}_{2}),\\
 & \Gamma_{M}:=\left\{ gT^{a}\in\Gamma_{T}\mid(-1)^{a}\det(g)=1,a=0,1\right\} .
\end{aligned}
\]
We also introduce the congruence subgroups: 
\[
\begin{aligned} & \Gamma(1+i):=\left\{ g\mid g\equiv E_{4}\text{ mod (1+i)}\right\} ,\\
 & \Gamma_{T}(1+i):=\Gamma(1+i)\rtimes\left\langle T\right\rangle .
\end{aligned}
\]
Then the arithmetic group relevant to the quotient is
\[
\begin{aligned}\Gamma_{M}(1+i):=\Gamma_{M}\cap\Gamma_{T}(1+i),\end{aligned}
\]
which defines the quotient $\Gamma_{M}(1+i)\setminus\mathbb{H}_{2}$.
Note that there is a natural map $\Gamma_{M}(1+i)\setminus\mathbb{H}_{2}\to\Gamma_{T}(1+i)\setminus\mathbb{H}_{2}$
which is generically $2:1$. 

The Baily-Borel-Satake compactification of the arithmetic quotient
of $\Gamma_{T}(1+i)\setminus\mathbb{H}_{2}$ is given explicitly by
the Zariski closure of the image of the map 
\[
\Phi:\mathbb{H}_{2}\to\mathbb{P}^{9},\;W\mapsto[\Theta_{1}(W)^{2},\cdots,\Theta_{10}(W)^{2}],
\]
where theta functions $\Theta_{i}(W)(i=1,...,10)$ correspond to ten
different (even) spin structures. These squares of the theta functions
are modular forms of weight two on the group $\Gamma_{T}(1+i)$ with
a character given by determinant $\det(gT^{a})=\det(g)$ for $gT^{a}\in\Gamma_{T}(1+i)$,
$(a=0,1)$ (see \cite[Prop. 3.1.1]{Matsu93}). Also, there are five
linear relations among them. Hence we have $\overline{\Gamma_{T}(1+i)\setminus\mathbb{H}_{2}}\simeq\mathbb{P}^{4}$
for the compactification. When $W=\,^{t}W$, these theta functions
reduces to the theta functions $\theta_{1}(\tau)^{4},...,\theta_{10}(\tau)^{4}$
of genus two which generate Siegel modular forms of level two and
even weights. The Igusa quartic is a quartic relation satisfied by
$\theta_{i}(\tau)^{4}$, hence defines a quartic hypersurface in $\mathbb{P}^{4}$.

Actually the above five linear relations correspond to Pl\"ucker
relations (\ref{eq:AppC-rel-Ys}) under a suitable identification
of the $\Theta_{i}(W)^{2}$'s with the semi-invariants $Y_{k}$'s,
which we will do in our companion paper \cite{HLTYpartII} to introduce
$\lambda_{K3}$-functions. Under this identification, the Igusa quartic
$\left\{ F_{4}(Y_{0},...,Y_{4})=0\right\} \subset\mathbb{P}(1^{5})$
above coincides with the closure of $\Phi(\left\{ W=\,^{t}W\right\} )$.

To describe further relations of the arithmetic quotients to $\mathcal{M}_{6}$
in (\ref{eq:DoubleCover}), we note an isomorphism $\mathcal{D}_{K3}\simeq\mathbb{H}_{2}$
of the two domains (see \cite[Sect.1.3]{Matsu93}). Here we also note
the isomorphism $\mathcal{G}(2)\simeq\Gamma_{M}(1+i)$ \cite[Prop.1.5.1]{Matsu93}.
Due to the former isomorphism, we have the period map $\mathcal{P}:\mathcal{M}_{6}\to\mathbb{H}_{2}$
as a multi-valued map on $\mathcal{M}_{6}$ with its monodromy group
$\mathcal{G}(2)$. 
\begin{prop}[{ \cite[Thm.4.4.1]{Matsu93}}]
We have the following commutative diagram:
\[
\xymatrix{\mathcal{M}_{6}\;\;\ar[dr]_{\Phi_{Y}}\ar[rr]^{\mathcal{P}} &  & \;\;\mathbb{H}_{2}\ar[dl]^{\Phi}\\
 & \mathbb{P}^{9},
}
\]
where $\Phi_{Y}$ defined by $A\mapsto[Y_{0}(A),...,Y_{4}(A),Y_{6}(A),...,Y_{10}(A)]$
with the semi-invariants of $3\times6$ matrices given in Appendix
\ref{sec:Appendix-birat-M}.1. 
\end{prop}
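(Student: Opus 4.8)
The plan is to unwind the diagram into a single identity of projective maps and then prove it by comparing line bundles and linear systems on $\mathcal{M}_6$. Since $\Phi$ is defined by the ten squared theta constants $\Theta_i(W)^2$ and $\Phi_Y$ by the ten semi-invariants $Y_0,\dots,Y_4,Y_6,\dots,Y_{10}$, commutativity amounts to showing that, after the isomorphism $\mathcal{D}_{K3}\simeq\mathbb{H}_2$ and $\mathcal{G}(2)\simeq\Gamma_M(1+i)$ identify $\mathcal{M}_6$ with the compactification $\overline{\Gamma_M(1+i)\backslash\mathbb{H}_2}$, the pullbacks $\mathcal{P}^*(\Theta_i^2)$ agree projectively with the $Y_k$ under a suitable bijection $i\mapsto k(i)$ between the ten even spin structures and the ten semi-invariants. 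First I would fix such a labeling, guided by the fact that the ten even theta characteristics of genus two correspond to the ten ways of splitting the six branch lines into two triples, which matches the index patterns of the Plücker-type semi-invariants assembled from the $3\times3$ minors $D(i_1,i_2,i_3)$.

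Next I would identify the two natural line bundles. On the theta side, the $\Theta_i^2$ are weight-two modular forms with the determinant character on $\Gamma_T(1+i)$, hence sections of an ample line bundle $L$ on the Baily-Borel compactification $\overline{\Gamma_T(1+i)\backslash\mathbb{H}_2}\simeq\mathbb{P}^4$; pulling back along the generically $2:1$ map $\Gamma_M(1+i)\backslash\mathbb{H}_2\to\Gamma_T(1+i)\backslash\mathbb{H}_2$ and along $\mathcal{P}$ yields a line bundle $L'$ on $\mathcal{M}_6$ whose sections include all the $\mathcal{P}^*(\Theta_i^2)$. On the configuration side, the weight computation $\bar\omega(ga)=(\det g)^{-1}\bar\omega(a)$ established earlier records exactly the $GL(3,\mathbb{C})$-weight of the period; combined with the diagonal torus weights this presents each $Y_k$ as a section of a line bundle on the GIT quotient $\mathcal{M}_6$. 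The essential step is to check that these two bundles coincide, i.e. that the period integral $\bar\omega$ intertwines the two automorphy factors so that $\Theta_i(\mathcal{P}(A))^2$ and $Y_{k(i)}(A)$ transform identically under both $SL(3,\mathbb{C})\times(\mathbb{C}^*)^6$ and the monodromy group $\mathcal{G}(2)$.

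With the bundles matched, I would finish by a dimension count. Both collections are ten sections subject to precisely five relations — the five linear relations among the $\Theta_i^2$ and the five Plücker relations among the $Y_k$ — so each spans the same five-dimensional space $H^0(\mathcal{M}_6,L')$, and both $\Phi\circ\mathcal{P}$ and $\Phi_Y$ are the associated projective map up to an automorphism of $\mathbb{P}^9$ that permutes and scales coordinates. To rigidify this automorphism to the identity I would match the five linear theta relations with the five Plücker relations (\ref{eq:AppC-rel-Ys}) under the chosen labeling, exactly the correspondence asserted in the text; this forces the residual change of basis to be scalar and yields the stated equality in $\mathbb{P}^9$.

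The main obstacle is the bundle-matching of the second step: it requires knowing that the explicit realization of the period map through the transcendental lattice $U(2)^{\oplus2}\oplus A_1^{\oplus2}$ and its domain $\mathbb{H}_2$ converts the weight-two determinant character of the theta forms into precisely the $(\det)^{-1}$ weight of $\bar\omega$, with no residual discrepancy character. This is where the Kummer and theta geometry of the K3 double cover enters, and where one leans on Matsumoto's explicit compatibility of $\mathcal{G}(2)\simeq\Gamma_M(1+i)$ with the theta--semi-invariant dictionary; the genuinely new verification is the absence of any extra automorphic factor intertwining the two sides.
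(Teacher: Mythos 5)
First, note that the paper itself offers no proof of this proposition: it is quoted verbatim from Matsumoto (Thm.\ 4.4.1 of \cite{Matsu93}), so there is no internal argument to compare against. Judged on its own merits, your outline follows the natural strategy (and roughly the one Matsumoto actually implements): label the ten even theta characteristics by the ten splittings of the six branch lines into two triples, match automorphy factors so that the $\mathcal{P}^{*}(\Theta_{i}^{2})$ and the $Y_{k}$ are sections of one and the same line bundle on $\mathcal{M}_{6}$, and then identify the two ordered ten-tuples. The first two steps are plausible in outline, though the second is exactly the hard content and you correctly flag that you are leaning on Matsumoto's dictionary rather than supplying it.

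The genuine gap is in your final step. Knowing that $\{\mathcal{P}^{*}\Theta_{i}^{2}\}$ and $\{Y_{k}\}$ span the same five-dimensional space of sections, and that the two ten-tuples satisfy the same five linear relations under your labeling, does \emph{not} force the residual change of basis to be scalar: for any linear automorphism $T$ of that five-dimensional space, the tuple $(T(Y_{k}))_{k}$ satisfies exactly the same relations as $(Y_{k})_{k}$, so relation-matching leaves a full $GL_{5}$ of ambiguity. What actually rigidifies the identification is divisor matching for each index separately: $\Theta_{i}^{2}$ vanishes precisely on the Heegner/boundary divisor attached to its characteristic, and $Y_{k(i)}=[i_{1}i_{2}i_{3}][i_{4}i_{5}i_{6}]$ vanishes precisely where the corresponding triple of lines becomes concurrent; once one shows these loci correspond under $\mathcal{P}$, each $\mathcal{P}^{*}\Theta_{i}^{2}$ is individually proportional to $Y_{k(i)}$, and only then do the five linear relations serve their purpose, namely to force all ten proportionality constants to coincide. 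Without the divisor-by-divisor comparison (which is the substantive geometric input of Matsumoto's theorem), your dimension count establishes only that $\Phi\circ\mathcal{P}$ and $\Phi_{Y}$ agree up to an unknown projective-linear automorphism of the target $\mathbb{P}^{4}\subset\mathbb{P}^{9}$, which is strictly weaker than commutativity of the diagram.
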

The map $\Phi_{Y}:\mathcal{M}_{6}\to\mathbb{P}^{4}\subset\mathbb{P}^{9}$
is $2:1$ whose branch locus is the Igusa quartic $\left\{ F_{4}(Y)=0\right\} $
in $\mathbb{P}(1^{5})\simeq\overline{\Gamma_{T}(1+i)\setminus\mathbb{H}_{2}}$.
On the other hand, we have noted that there is a natural map $\Gamma_{M}(1+i)\setminus\mathbb{H}_{2}\to\Gamma_{T}(1+i)\setminus\mathbb{H}_{2}$
which is generically $2:1$. All these facts are unified by the existence
of new theta function $\Theta$ which is modular of weight 4 on $\Gamma_{T}$
\cite[Lem.3.1.3]{Matsu93}, and which vanishes on $\left\{ W=\,^{t}W\right\} $.
Proofs of the following results can be found in Proposition 3.1.5
and Theorem 3.2.4 in \cite{Matsu93}. 
\begin{prop}
The theta functions satisfy 
\begin{equation}
\Theta(W)^{2}=\frac{3\cdot5^{2}}{2^{6}}\bigg\{\bigg(\sum_{i=1}^{10}\Theta_{i}(W)^{4}\bigg)^{2}-4\,\sum_{i=1}^{10}\Theta_{i}(W)^{8}\bigg\}\label{eq:Theta-equation}
\end{equation}
and this describes the Baily-Borel-Satake compactification of $\Gamma_{M}(1+i)\setminus\mathbb{H}_{2}$
as a double cover of $\overline{\Gamma_{T}(1+i)\setminus\mathbb{H}_{2}}\simeq\mathbb{P}^{4}$.
This compactification is isomorphic to the GIT compactification $\mathcal{M}_{6}$. 
\end{prop}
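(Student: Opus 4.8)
The plan is to prove the explicit theta relation \eqref{eq:Theta-equation} first, and then to read off both the double-cover description and the identification with $\mathcal{M}_{6}$ from it together with the commutative triangle $\Phi\circ\mathcal{P}=\Phi_{Y}$ of the preceding proposition. Everything is driven by the fact that, by definition, $\Gamma_{M}$ is the index-two kernel of the sign character $\chi(gT^{a})=(-1)^{a}\det(g)$ on $\Gamma_{T}$, and that the weight-$4$ form $\Theta$ of \cite[Lem.3.1.3]{Matsu93} is, up to scale, the $\chi$-semiinvariant automorphic form that distinguishes the two sheets of the generically $2:1$ cover $\Gamma_{M}(1+i)\setminus\mathbb{H}_{2}\to\Gamma_{T}(1+i)\setminus\mathbb{H}_{2}$.

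For the identity itself I would argue as follows. The form $\Theta^{2}$ is a holomorphic weight-$8$ modular form for $\Gamma_{T}(1+i)$; after passing to squares its automorphic character matches that of a degree-$4$ monomial in the $\Theta_{i}^{2}$, so it lies in the degree-$4$ part of the homogeneous coordinate ring of $\overline{\Gamma_{T}(1+i)\setminus\mathbb{H}_{2}}$. Since the five relations among the $\Theta_{i}^{2}$ are linear, $\overline{\Gamma_{T}(1+i)\setminus\mathbb{H}_{2}}\simeq\mathbb{P}^{4}$ sits \emph{linearly} in $\mathbb{P}^{9}$; hence, by Baily--Borel, that coordinate ring is a polynomial ring generated in weight $2$, and the weight-$8$ invariants are exactly the quartics in the $\Theta_{i}^{2}$. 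I may therefore write $\Theta^{2}=Q(\Theta_{1}^{2},\dots,\Theta_{10}^{2})$ for some quartic $Q$. Now $\Theta$ vanishes on the symmetric locus $\{W={}^{t}W\}$, whose image under $\Phi$ has Zariski closure the irreducible Igusa quartic $\{F_{4}=0\}$; thus $Q$ vanishes on $\{F_{4}=0\}$ and, being itself a quartic, must equal $c\,F_{4}$. Rewriting $F_{4}$ in the theta coordinates through the identification of the $Y_{k}$ with the semi-invariants turns $c\,F_{4}$ into the symmetric combination $(\sum_{i}\Theta_{i}^{4})^{2}-4\sum_{i}\Theta_{i}^{8}$, and the constant $c=3\cdot 5^{2}/2^{6}$ is then fixed by matching leading coefficients at one convenient point (a zero-dimensional Satake cusp), a finite computation.

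Granting \eqref{eq:Theta-equation}, the geometry follows quickly. The enlarged map $W\mapsto[\Theta_{1}(W)^{2}:\cdots:\Theta_{10}(W)^{2}:\Theta(W)]$, which after the five linear relations lands in $\mathbb{P}(1^{5},2)$ with $Y_{5}=\Theta$, is $\Gamma_{M}(1+i)$-invariant because $\Gamma_{M}=\ker\chi$ fixes both $\Theta$ and every $\Theta_{i}^{2}$; it therefore descends to $\Gamma_{M}(1+i)\setminus\mathbb{H}_{2}$ and extends to its Baily--Borel--Satake compactification, and \eqref{eq:Theta-equation} identifies the image with the hypersurface $\{Y_{5}^{2}=F_{4}\}=\mathcal{M}_{6}$ of \eqref{eq:DoubleCover}. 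Forgetting $Y_{5}=\Theta$ is precisely the generically $2:1$ projection onto $\overline{\Gamma_{T}(1+i)\setminus\mathbb{H}_{2}}\simeq\mathbb{P}^{4}$ branched along $\{\Theta=0\}=\{F_{4}=0\}$, which is the asserted double-cover structure. The identification with the GIT compactification is then the content of the preceding proposition: under $\mathcal{G}(2)\simeq\Gamma_{M}(1+i)$ and $\mathcal{D}_{K3}\simeq\mathbb{H}_{2}$ one has $P(3,6)\simeq\Gamma_{M}(1+i)\setminus\mathbb{H}_{2}$, the triangle $\Phi\circ\mathcal{P}=\Phi_{Y}$ extends $\mathcal{P}$ to an isomorphism $\mathcal{M}_{6}\xrightarrow{\sim}\overline{\Gamma_{M}(1+i)\setminus\mathbb{H}_{2}}$ carrying $\Phi_{Y}$ to the double cover just described, and a double cover of $\mathbb{P}^{4}$ branched along a fixed irreducible quartic is unique.

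The main obstacle is the identity \eqref{eq:Theta-equation}: the substantive input is the ring-theoretic claim that the weight-$8$ invariants of $\Gamma_{T}(1+i)$ are nothing but quartics in the $\Theta_{i}^{2}$ (equivalently, that the automorphic ring is generated in weight $2$), together with the bookkeeping that pins down the exact constant. Once \eqref{eq:Theta-equation} is established, the double-cover statement and the comparison with $\mathcal{M}_{6}$ are essentially formal, resting only on the uniqueness of the branched cover and on the already-available period-map triangle.
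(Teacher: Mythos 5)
Your strategy is sound, but it is worth saying up front that the paper does not prove this proposition at all: it is stated with an explicit pointer to Proposition 3.1.5 and Theorem 3.2.4 of \cite{Matsu93}, so your reconstruction is a genuinely different (and more self-contained) route than what the authors offer. The skeleton of your argument is the natural one and almost certainly parallels Matsumoto's: (a) $\Theta^{2}$ is a weight-$8$ form with trivial character for $\Gamma_{T}(1+i)$ (the character of $\Theta$ is exactly the sign character cutting out $\Gamma_{M}$, so it dies upon squaring, and $\det^{4}=1$ on the unitary group over $\mathbb{Z}[i]$, so quartics in the $\Theta_{i}^{2}$ are likewise character-free); (b) such forms are quartics in the $\Theta_{i}^{2}$; (c) vanishing of $\Theta$ on $\{W={}^{t}W\}$, whose image is Zariski-dense in the irreducible Igusa quartic, forces $Q=c\,F_{4}$; (d) a finite computation identifies $F_{4}$ with the symmetric expression and pins down $c$. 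Two caveats. First, step (b) is where the real content sits, and your justification ("by Baily--Borel, that coordinate ring is a polynomial ring generated in weight $2$") is slightly too quick: knowing that $\mathrm{Proj}$ of the ring of modular forms is $\mathbb{P}^{4}$ via the weight-$2$ part does not by itself give generation in weight $2$ in low degrees; you additionally need that weight-$8$ forms extend to sections of $\mathcal{O}(4)$ on the compactification (Koecher principle plus normality of the Baily--Borel model, with the automorphic bundle identified with $\mathcal{O}(1)$). That extension statement, together with the isomorphism $\overline{\Gamma_{T}(1+i)\setminus\mathbb{H}_{2}}\simeq\mathbb{P}^{4}$ itself, is precisely the imported input that the paper outsources to \cite{Matsu93}, so your proof is conditional on it rather than independent of it. Second, your closing argument for the identification with $\mathcal{M}_{6}$ via uniqueness of the double cover of $\mathbb{P}^{4}$ branched along a fixed irreducible quartic is correct (the square root $\mathcal{O}(2)$ of $\mathcal{O}(4)$ is unique) and matches the logic of the commutative triangle in the preceding proposition; this part is genuinely formal, as you say. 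What your route buys is a transparent derivation of \eqref{eq:Theta-equation} from first principles once the structure of the weight-$2$ ring is granted; what the paper's citation buys is brevity and the verified constant $3\cdot 5^{2}/2^{6}$, which in your approach still requires the "finite computation" you defer.
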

The geometry of the double cover (\ref{eq:DoubleCover}), or (\ref{eq:Theta-equation}),
is a well-studied subject in many respects. For example, it is known
that the double cover is singular along 15 lines which are identified
with the one dimensional boundary component of the Baily-Borel-Satake
compactification. It is also singular at 15 points, which are given
as intersections of the lines, representing the zero dimensional components
of the Baily-Borel-Satake compactification. In Section \ref{sec:BBS},
we will describe the configuration of these singularities, and will
find a good resolution from the viewpoint of mirror symmetry. Our
resolution is also important for introducing the functions $\lambda_{K3}$
which are the mirror maps of our family. 

\para{Birational toric variety ${\mathcal M}_{3,3}$} The Aomoto-Gel'fand
system $E(3,6)$ should be considered as a hypergeometric system defined
over the GIT compactification (or the Baily-Borel-Satake compactification)
$\mathcal{M}_{6}$. In the next section, we will find that there appears
another variety $\mathcal{M}_{3,3}$, which is a toric variety, from
the analysis of period integrals. Classically, $\mathcal{M}_{3,3}$
comes from the following birational correspondence to $\mathcal{M}_{6}$
\cite{Reuv}. Let us consider the six lines $\left\{ \ell_{i}\right\} $
in general position and select three lines $\ell_{i_{1}},\ell_{i_{2}},\ell_{i_{3}}$
to have the map 
\begin{equation}
\left\{ \ell_{i}\right\} \mapsto\left\{ \ell_{i_{1}}\cap\ell_{i_{2}},\ell_{i_{1}}\cap\ell_{i_{3}},\ell_{i_{2}}\cap\ell_{i_{3}},\ell_{i_{4}},\ell_{i_{5}},\ell_{i_{6}}\right\} ,\label{eq:birat-map-M6-M33}
\end{equation}
which gives a configuration of three points in $\mathbb{P}^{2}$ and
three lines in $\mathbb{\mathbb{P}}^{2}$. This defines a rational
map from $\mathcal{M}_{6}$ to the moduli space of configurations
of three points and three lines in $\mathbb{P}^{2}.$ The variety
$\mathcal{M}_{3,3}$ is the GIT compactification of these configurations,
which turns out to be the following toric hypersurface; 
\[
\mathcal{M}_{3,3}\simeq\left\{ X_{1}X_{2}X_{3}=X_{4}X_{5}X_{6}\right\} \subset\mathbb{P}^{5}.
\]
Since three points in general position determines three lines passing
through them, given a configuration of three points and three lines
in general position, we have six lines in general position in $\mathbb{P}^{2}$.
Hence the map (\ref{eq:birat-map-M6-M33}) gives a birational map
between $\mathcal{M}_{6}$ and $\mathcal{M}_{3,3}$. See Appendix
\ref{sec:Appendix-birat-M} for its explicit form. This toric variety
$\mathcal{M}_{3,3}$ will play a key role in our analysis of period
integrals defined on $\mathcal{M}_{6}$.

\subsection{Toroidal compactification $\mathcal{M}_{SecP}$ of $P(3,6)$}

In this section, we shall apply the techniques in \cite{HLY} to give
a toric compactification of $P(3,6)$. This is essential for describing
mirror symmetry of the double cover family of K3 surfaces. The compactification
$\mathcal{M}_{6}$ of $P(3,6)$ deals with the $GL(3,\mathbb{C})$
action on the affine coordinates of $A=(a_{ij})$ in terms of classical
invariant theory. Similarly for the birational toric variety $\mathcal{M}_{3,3}$.
Our third compactification $\mathcal{M}_{SecP}$ arises from reducing
the group actions of $GL(3,\mathbb{C})$ and $(\mathbb{C}^{*})^{6}$
on $A\in M_{3,6}^{o}$ to the diagonal torus actions. 

\para{Partial 'gauge' fixing to $T\simeq (\mC^*)^5$} To reduce $GL(3,\mathbb{C})$
action to the diagonal torus actions, we transform the general matrix
$A\in M_{3,6}^{o}$ to the form, 
\begin{equation}
\left(\begin{matrix}1 & 0 & 0 & a_{2} & b_{1} & c_{0}\\
0 & 1 & 0 & a_{0} & b_{2} & c_{1}\\
0 & 0 & 1 & a_{1} & b_{0} & c_{2}
\end{matrix}\right)=:(E_{3}\,\bm{a}\,\bm{b}\,\bm{c}).\label{eq:E3-gaugeA}
\end{equation}
Clearly this reduces the $GL(3,\mathbb{C})$ action from the left
to the diagonal tori. We note that there are still residual group
actions of the diagonal tori $(\mathbb{C}^{*})^{3}\subset GL(3,\mathbb{C})$
combined with the $(\mC^{*})^{6}$ action from the right, i.e., 
\[
T:=\left\{ (g,t)\in GL(3,\mC)\times(\mathbb{C}^{*})^{6}\mid g\left(E_{3}\begin{smallmatrix}* & * & *\\
* & * & *\\
* & * & *
\end{smallmatrix}\right)t=\left(E_{3}\begin{smallmatrix}* & * & *\\
* & * & *\\
* & * & *
\end{smallmatrix}\right)\right\} \big/\sim,
\]
where $(\lambda g,\lambda^{-1}t)\sim(g,t)$ with $\lambda\in\mathbb{C}^{*}$.
It is easy to see that $T\simeq(\mC^{*})^{5}$. We denote by $M_{3,6}^{E_{3}}$
the subset of $M_{3,6}^{o}$ consisting of matrices of the form (\ref{eq:E3-gaugeA}).
We regard $M_{3,6}^{E_{3}}$ as a subset of the 9-dimensional affine
$\mC$-space $\mathbb{A}^{9}=\mathbb{C}^{9}$. Note that $M_{3,6}^{E_{3}}$
is an open dense subset in $\mathbb{A}^{9}$, and the $T$ action
naturally extends to $\mathbb{A}^{9}$. It is easy to read off the
weights of the $T\simeq(\mC^{*})^{5}$ actions on $\mathbb{A}^{9}$.
To do that we fix the isomorphism $T\simeq(\mC^{*})^{5}$, and present
the weights of the $T$-actions on $(\bm{a}\,\bm{b}\:\bm{c})\in\mathbb{A}^{9}$
in the following table: 
\begin{equation}
\begin{matrix}a_{0} & a_{1} & a_{2} & b_{0} & b_{1} & b_{2} & c_{0} & c_{1} & c_{2}\\
\hline 1 & 1 & 1 & 0 & 0 & 0 & 0 & 0 & 0\\
0 & 0 & 0 & 1 & 1 & 1 & 0 & 0 & 0\\
0 & 0 & 0 & 0 & 0 & 0 & 1 & 1 & 1\\
0 & -1 & -1 & 0 & 0 & 1 & 0 & 1 & 0\\
0 & 1 & 0 & 0 & -1 & -1 & 0 & 0 & 1
\end{matrix}\label{eq:tableWeights}
\end{equation}
The toroidal compactification $\mathcal{M}_{SecP}$ of $P(3,6)$ will
turn out to be a toric variety compactifying the quotient $\mathbb{A}^{9}/T$. 

\para{Toroidal compactification via the secondary fan} \label{para:SecPolytope}
As it will become clear when we describe the differential equations
of period integrals, the toric variety of the quotient $\mathbb{A}^{9}/T$
is given by the data of nine integral vectors which we read from the
nine column integral vectors in the table (\ref{eq:tableWeights}).
Following the convention in \cite{GKZ1}, reordering the columns slightly,
we define a finite set $\mathcal{A}$ of the integral vectors by 
\begin{equation}
\mathcal{A}:=\left\{ \left(\begin{smallmatrix}1\\
0\\
0\\
0\\
0
\end{smallmatrix}\right),\left(\begin{smallmatrix}0\\
1\\
0\\
0\\
0
\end{smallmatrix}\right),\left(\begin{smallmatrix}0\\
0\\
1\\
0\\
0
\end{smallmatrix}\right),\left(\begin{smallmatrix}1\\
0\\
0\\
-1\\
1
\end{smallmatrix}\right),\left(\begin{smallmatrix}1\\
0\\
0\\
-1\\
0
\end{smallmatrix}\right),\left(\begin{smallmatrix}0\\
1\\
0\\
0\\
-1
\end{smallmatrix}\right),\left(\begin{smallmatrix}0\\
1\\
0\\
1\\
-1
\end{smallmatrix}\right),\left(\begin{smallmatrix}0\\
0\\
1\\
1\\
0
\end{smallmatrix}\right),\left(\begin{smallmatrix}0\\
0\\
1\\
0\\
1
\end{smallmatrix}\right)\right\} .\label{eq:defA}
\end{equation}
 This set $\mathcal{A}$ is a finite set in $\mZ^{5}\equiv N$. We
denote by $M$ the dual of $N$ with the dual pairing $\langle\,,\,\rangle:M\times N\to\mZ$. 
\begin{prop}
The cone $Cone(\mathcal{A})$ generated by $\mathcal{A}$ is a Gorenstein
cone in $N_{\mathbb{R}}$, and satisfies 
\[
Cone(\mathcal{A})\cap\left\{ x\mid\langle m,x\rangle=1\right\} =\mathcal{A}
\]
 with $m=(1,1,1,0,0).$ \end{prop}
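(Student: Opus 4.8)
The plan is to establish the two assertions separately. For the second one I read the claimed equality as an identity of lattice-point sets, i.e. I will show $Cone(\mathcal{A})\cap N\cap\{x\mid\langle m,x\rangle=1\}=\mathcal{A}$, since the left-hand side as literally written is a $4$-dimensional polytope. For the Gorenstein property I would take $m=(1,1,1,0,0)\in M$ and check directly from \eqref{eq:defA} that $\langle m,v\rangle=1$ for each of the nine generators $v\in\mathcal{A}$; this is immediate because the first three coordinates of every generator form one of the standard basis vectors $e_1,e_2,e_3$. The functional $m$ is primitive, and since $\langle m,v\rangle=1>0$ for all generators, $Cone(\mathcal{A})$ lies in the half-space $\langle m,\cdot\rangle\ge0$ and meets its boundary hyperplane only at $0$, so the cone is strongly convex; it is full-dimensional because, labelling the generators $v_1,\dots,v_9$ in the order of \eqref{eq:defA}, the vectors $v_1,v_2,v_3$ together with $v_5-v_1=-e_4$ and $v_4-v_5=e_5$ form a basis of $N$. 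As every extremal ray of a finitely generated cone is spanned by one of its generators, each primitive ray generator is some $v\in\mathcal{A}$ and hence lies on $\{\langle m,\cdot\rangle=1\}$; this exhibits $Cone(\mathcal{A})$ as a Gorenstein cone with degree functional $m$.

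The heart of the matter is the lattice-point assertion, and the key observation is that the degree-one slice $P:=Cone(\mathcal{A})\cap\{\langle m,\cdot\rangle=1\}$ fibers over a unimodular simplex. Consider the lattice projection $p\colon N=\mZ^5\to\mZ^3$ onto the first three coordinates. By \eqref{eq:defA} the nine generators split into three groups according to whether $p(v)$ equals $e_1$, $e_2$ or $e_3$; in the last two coordinates the three groups give the planar triangles $\mathrm{conv}\{(0,0),(-1,1),(-1,0)\}$, $\mathrm{conv}\{(0,0),(0,-1),(1,-1)\}$ and $\mathrm{conv}\{(0,0),(1,0),(0,1)\}$. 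A routine $2\times2$ determinant computation shows each of these three triangles is unimodular (so only its vertices are lattice points), while the base $\mathrm{conv}\{e_1,e_2,e_3\}$ is the standard $2$-simplex in the affine plane $\{x_1+x_2+x_3=1\}$, whose only lattice points in $\mZ^3$ are $e_1,e_2,e_3$. Thus $P$ is the Cayley polytope of three unimodular triangles placed over the three vertices of a unimodular $2$-simplex.

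I would then finish as follows. Given $q\in P\cap N$, its image $p(q)$ lies in $\mathrm{conv}\{e_1,e_2,e_3\}\cap\mZ^3=\{e_1,e_2,e_3\}$; say $p(q)=e_1$. Writing $q$ as a convex combination of the generators and applying $p$, the fact that $e_1$ is a vertex of $\mathrm{conv}\{e_1,e_2,e_3\}$ forces all the weight onto the first group, so $q$ lies in the first triangle, whose only lattice points are its three vertices by unimodularity; all three belong to $\mathcal{A}$, and the cases $p(q)=e_2,e_3$ are identical. Combined with the evident inclusion $\mathcal{A}\subseteq P\cap N$, this yields the desired equality. The only real obstacle is this no-extra-lattice-points direction, and the unimodular-fibration observation is precisely what reduces it to the trivial unimodular cases; the determinant verifications themselves are routine.
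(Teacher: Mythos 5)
Your proposal is correct, and it is considerably more informative than what the paper offers: the paper's entire proof is the sentence ``This can be verified by direct computations,'' so there is no argument to compare against beyond brute-force checking. Your reading of the displayed equality as a statement about lattice points (i.e.\ $Cone(\mathcal{A})\cap N\cap\{\langle m,x\rangle=1\}=\mathcal{A}$) is the right one, since the literal left-hand side is the $4$-dimensional polytope $Conv(\mathcal{A})$. The Gorenstein verification is complete: $\langle m,v\rangle=1$ for all nine generators (which also forces each $v$ to be primitive), strong convexity follows from positivity of $m$ on the generators, and your basis $e_1,e_2,e_3,-e_4=v_5-v_1,e_5=v_4-v_5$ gives full-dimensionality. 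The Cayley-polytope observation --- that the height-one slice fibers over the unimodular simplex $\mathrm{conv}\{e_1,e_2,e_3\}$ with unimodular triangular fibers, each determinant being $\pm1$ --- cleanly reduces the ``no extra lattice points'' claim to trivial two-dimensional cases, and the vertex argument correctly forces all convex weight onto a single group of three generators. This structures the ``direct computation'' the authors allude to into a short conceptual proof; nothing is missing.
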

\begin{proof}
This can be verified by direct computations.
\end{proof}
We consider the regular triangulations of the convex hull $Conv(\mathcal{A})$.
Following \cite{GKZ2}, we have the so-called secondary polytope of
$\mathcal{A}$, which we denote by $Sec(\mathcal{A})$. See Appendix
\ref{sec:Appendix-SecA}. The secondary polytope is a lattice polytope
in $L_{\mathbb{R}}:=L\otimes\mR$ with 
\begin{equation}
L:=\mathrm{Ker}\left\{ \varphi_{\mathcal{A}}:\mZ^{\mathcal{A}}\to\mZ^{5}\right\} ,\label{eq:lattice-L}
\end{equation}
where $\varphi_{\mathcal{A}}$ is the integral linear map defined
by the $5\times9$ matrix obtained from $\mathcal{A}$ in (\ref{eq:defA}).
The normal fan of $Sec(\mathcal{A})$, called secondary fan, will
be denoted by $Sec\,\Sigma(\mathcal{A})$. The projective toric variety
$\mathbb{P}_{Sec(\mathcal{A})}$ for the polytope $Sec(\mathcal{A})$
in $L_{\mathbb{R}}=L\otimes\mathbb{R}$ is the toric variety giving
a natural compactification of the quotient $\mathbb{A}^{9}/T$. We
shall denote this compactification by $\mathcal{M}_{SecP}$.
\begin{prop}
\label{prop:SecP}The secondary polytope $Sec(\mathcal{A})\subset L_{\mathbb{R}}$
has 108 vertices. Except for six vertices, the cones from the vertices
are regular cones which define smooth affine charts (coordinate rings)
of $\mathcal{M}_{SecP}$. The affine charts corresponding to the 6
vertices are singular at the origin and are isomorphic to 
\[
\mathcal{M}_{SecP}^{Loc}=\mathrm{Spec}\,\mathbb{C}[C_{NE}\cap L]\simeq\left\{ (x_{ij})\in\mathbb{A}^{2\times3}\mid\mathrm{rk}\left(\begin{matrix}x_{11} & x_{12} & x_{13}\\
x_{21} & x_{22} & x_{23}
\end{matrix}\right)\leq1\right\} ,
\]
where $C_{NE}\subset L_{\mR}$ is the cone defined by 
\[
C_{NE}=\mathrm{Cone}\left\{ \begin{aligned}(-1,\;\;\,0,\;\;\,0,1,0,0,0,1,-1),\\
(\;\;\,0,-1,\,\;\;0,1,-1,1,0,0,0),\\
(\;\;\,0,\;\;\,0,-1,0,0,1,-1,1,0),
\end{aligned}
\;\begin{aligned}(-1,\;\;\,0,\;\;\,0,0,1,-1,1,0,0)\\
(\;\;\,0,-1,\,\;\;0,0,0,0,1,-1,1)\\
(\;\;\,0,\;\;\,0,-1,-1,1,0,0,0,1)
\end{aligned}
\right\} .
\]
\end{prop}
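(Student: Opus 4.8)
The plan is to read off the vertices of $Sec(\mathcal{A})$ through the standard dictionary of \cite{GKZ2}: the vertices of the secondary polytope are in bijection with the regular (coherent) triangulations of $\mathcal{A}$. For the vertex $v$ attached to a triangulation $T$, the affine chart of $\mathbb{P}_{Sec(\mathcal{A})}=\mathcal{M}_{SecP}$ at $v$ is $\mathrm{Spec}\,\mathbb{C}[\mathrm{Cone}(Sec(\mathcal{A})-v)\cap L]$, where the tangent cone $\mathrm{Cone}(Sec(\mathcal{A})-v)\subset L_{\mathbb{R}}$ is generated by the primitive edge vectors at $v$; these edges correspond to the bistellar flips of $T$ (dually, the tangent cone is the dual of the secondary cone of height functions inducing $T$). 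Hence the chart is smooth, indeed $\cong\mathbb{A}^{4}$, exactly when this cone is simplicial and unimodular, and is singular precisely when $v$ is a non-simple vertex. The statement therefore reduces to (a) enumerating the regular triangulations of $\mathcal{A}$ and (b) testing unimodularity of the $4$-dimensional tangent cones.

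For (a) I would first fix the combinatorics of $P:=\mathrm{Conv}(\mathcal{A})$, a $4$-polytope whose nine vertices are the points of (\ref{eq:defA}) lying on the hyperplane $\langle m,\cdot\rangle=1$ with $m=(1,1,1,0,0)$, recording its facets and its circuits. The enumeration is best organized using the symmetry of $\mathcal{A}$: the cyclic permutation of the three blocks $\{v_{1},v_{4},v_{5}\}$, $\{v_{2},v_{6},v_{7}\}$, $\{v_{3},v_{8},v_{9}\}$ combined with the linear map $(x_{4},x_{5})\mapsto(-x_{4}-x_{5},x_{4})$ of order three is an automorphism, and together with the residual reflections of the $3$ points--$3$ lines configuration it yields a finite symmetry group acting on the set of triangulations. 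Listing the regular triangulations up to this group and then spreading them over their orbits gives the count $108$; this is the step I would be happy to confirm with \texttt{TOPCOM}/\texttt{Normaliz}, since the bookkeeping of coherent subdivisions is the most error-prone part.

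For (b), the edges of $Sec(\mathcal{A})$ at $v$ correspond to the bistellar flips of $T$, and the tangent cone is generated by the associated primitive edge vectors. Generically $T$ admits exactly four flips, so the cone is simplicial with four rays; for such $T$ I would verify that these four vectors form a $\mathbb{Z}$-basis of $L$, making the chart $\cong\mathbb{A}^{4}$ and the cone regular. This is the case for all but six triangulations. For the six exceptional ones, which are permuted among themselves by the symmetry group, the vertex has six incident edges, and the six primitive edge vectors are exactly the generators of $C_{NE}$ displayed in the statement (for one representative; the remaining five are its images under the order-three symmetry).

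It then remains to identify $\mathcal{M}_{SecP}^{Loc}=\mathrm{Spec}\,\mathbb{C}[C_{NE}\cap L]$ with the determinantal variety. Arranging the six generators as a $2\times3$ array $g_{ab}$ ($a\in\{1,2\}$, $b\in\{1,2,3\}$), one checks that the three relations $g_{11}+g_{22}=g_{12}+g_{21}$, $g_{11}+g_{23}=g_{13}+g_{21}$, $g_{12}+g_{23}=g_{13}+g_{22}$ hold identically; these are precisely the generators of the toric ideal of the Segre embedding $\mathbb{P}^{1}\times\mathbb{P}^{2}\hookrightarrow\mathbb{P}^{5}$. I would then exhibit a unimodular isomorphism $L\xrightarrow{\sim}\mathbb{Z}^{4}$ carrying $C_{NE}$ onto the cone over $\Delta^{1}\times\Delta^{2}$, whose normal semigroup ring is the Segre ring $\mathbb{C}[x_{ij}]/I_{2}$ with $I_{2}$ the ideal of $2\times2$ minors; normality of the Segre cone guarantees that the six rays are its full Hilbert basis, so the identification with $\{\mathrm{rk}(x_{ij})\le1\}$ is exact. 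The main obstacle is step (a): certifying that there are exactly $108$ regular triangulations and that precisely six of them fail to be unimodular. Everything else is a finite, symmetry-reduced lattice computation, and the recognition of $C_{NE}$ as the Segre cone is the clean conceptual endpoint, matching the four-dimensional flip of the determinantal singularity anticipated in the introduction.
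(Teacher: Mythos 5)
Your proposal is correct and follows essentially the same route as the paper, whose proof is simply the direct computation of the secondary polytope via the GKZ characteristic functions $\psi_{T}$ of the regular triangulations (cf.\ Appendix \ref{sec:Appendix-SecA}), with the $108$ vertices, the unimodularity check at the simple vertices, and the six non-simple vertices with tangent cone $C_{NE}$ all verified by machine (the authors use TOPCOM elsewhere for exactly this kind of enumeration). Your explicit identification of $C_{NE}$ with the cone over $\Delta^{1}\times\Delta^{2}$ (one checks, e.g., that the pairing $\{w_{1},w_{3},w_{5}\}$, $\{w_{2},w_{4},w_{6}\}$ of the generators in the coordinates of (\ref{eq:C_NE-4dim}) has constant column difference $(1,-1,1,-1)$ and that the resulting map to the Segre lattice is unimodular) is a clean way to make the determinantal description rigorous, but it is the same computation the paper asserts.
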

\begin{proof}
We can verify the claimed properties directly calculating the secondary
polytope. The cone $C_{NE}$ is described in Appendix \ref{sec:Appendix-SecA}. \end{proof}
\begin{rem}
One can also find more details about the combinatorics of the secondary
fan in \cite{Sekiguchi-Taka} .
\end{rem}
In the next section, we will observe that the convex hull of the 6
vertices coincides with a polytope which gives $\mathcal{M}_{3,3}$,
and that $\mathcal{M}_{SecP}$ gives a partial resolution of the singularities
of $\mathcal{M}_{3,3}$. This observation is the starting point of
our analysis of $E(3,6)$ defined on $\mathcal{M}_{6}$. 

Explicit forms of hypergeometric series of type $E(3,6:\alpha_{1},...,\alpha_{6})$
for general exponents $\alpha_{i}$ are considered in \cite{Sekiguchi-Taka}
by studying the combinatorial aspect of the secondary polytope $Sec(\mathcal{A})$.
However, it should be noted that our system $E(3,6)$ has special
values of exponents $\alpha_{1}=\cdots=\alpha_{6}=\frac{1}{2}$, which
belongs to the cases called \textit{resonant}, and is beyond the consideration
in \cite{Sekiguchi-Taka}. In fact, we need to find out detailed relationships
between the moduli spaces $\mathcal{M}_{6}$, $\mathcal{M}_{3,3}$
and $\mathcal{M}_{SecP}$ to write the solutions for this case. After
formulating the relationships, we will observe in Section \ref{sec:HyperG-D-Grassmann}
that the techniques in \cite{HKTY,HLY} developed in mirror symmetry
and the results in \cite{YoshidaEtal,Matsu93} merge quite nicely
in a general framework, i.e., \textit{$\mathcal{D}$-module on Grassmannians
\cite{Ao,Ao-Kita,GelfandGelfand}.} 

~

~

\section{\textbf{\textup{\label{sec:GKZ-from-E36}GKZ hypergeometric system
from $E(3,6)$}}}

It is known in general that the Aomoto-Gel'fand system on Grassmannians
is expressed by the Gel'fand-Geraev and Gel'fand-Kapranov-Zelevinski
system (GKZ system for short) when we reduce the $GL(n,\mathbb{C})$-action
to tori by making a ``partial gauge'' of the form (\ref{eq:E3-gaugeA})
(see \cite[Sect.3.3.4]{Ao-Kita}). Here we study the period integral
(\ref{eq:peridIntI}) with the reduced form (\ref{eq:E3-gaugeA})
to set up the GKZ system.

\subsection{GKZ hypergeometric system from $E(3,6)$}

Let us take the parameters in the six lines $\ell_{i}$ as in (\ref{eq:E3-gaugeA}).
Then we can write the holomorphic two form as 

\begin{equation}
\begin{aligned}\frac{d\mu}{\sqrt{\Pi_{i=1}^{6}\ell_{i}}} & =\frac{d\tx\wedge d\ty}{\sqrt{\tx\ty(a_{2}+a_{0}\tx+a_{1}\ty)(b_{1}+b_{2}\tx+b_{0}\ty)(c_{0}+c_{1}\tx+c_{2}\ty)}}\\
 & =\frac{1}{\sqrt{\big(a_{0}+\frac{a_{2}}{\tx}+a_{1}\frac{\ty}{\tx}\big)\big(b_{0}+\frac{b_{1}}{\ty}+b_{2}\frac{\tx}{\ty}\big)\big(c_{0}+c_{1}\tx+c_{2}\ty\big)}}\frac{d\tx\wedge d\ty}{\tx\ty},
\end{aligned}
\label{eq:1-over-f1f2f3}
\end{equation}
where we take the affine coordinate $\tz=1$ of $\mathbb{P}^{2}$.
We observe that the finite set $\mathcal{A}$ in (\ref{eq:defA})
can be interpreted as the exponents of the three Laurent polynomial
factors in the denominator, if we write $\mathcal{A}$ as follows:
\[
\begin{aligned}\mathcal{A}= & \bigg\{ e_{1}\times\left(\begin{matrix}0\\
0
\end{matrix}\right),e_{2}\times\left(\begin{matrix}0\\
0
\end{matrix}\right),e_{3}\times\left(\begin{matrix}0\\
0
\end{matrix}\right),\qquad\qquad\qquad\qquad\qquad\qquad\qquad\qquad\\
 & \quad e_{1}\times\left(\begin{matrix}-1\\
1
\end{matrix}\right),e_{1}\times\left(\begin{matrix}-1\\
0
\end{matrix}\right),e_{2}\times\left(\begin{matrix}0\\
-1
\end{matrix}\right),e_{2}\times\left(\begin{matrix}1\\
-1
\end{matrix}\right),e_{3}\times\left(\begin{matrix}1\\
0
\end{matrix}\right),e_{3}\times\left(\begin{matrix}0\\
1
\end{matrix}\right)\bigg\},
\end{aligned}
\]
where we $e_{1},e_{2},e_{3}$ are the basis of the first factor in
$\mZ^{5}=\mZ^{3}\times\mZ^{2}$. Let us write the three Laurent polynomial
factors as $f_{1}(a,\tx,\ty),f_{2}(b,\tx,\ty),f_{3}(c,\tx,\ty)$ so
that (\ref{eq:1-over-f1f2f3}) becomes 
\[
\frac{1}{\sqrt{f_{1}(a,\mathtt{x},\mathtt{y})f_{2}(b,\mathtt{x},\mathtt{y})f_{3}(c,\mathtt{x},\mathtt{y})}}\frac{d\mathtt{x}\wedge d\mathtt{y}}{\mathtt{xy}}.
\]
Observe the striking similarity with the corresponding forms we encountered
in a folklore paper \cite{HKTY}, except the appearance of the square
root in the denominator. 
\begin{prop}
\label{prop:GKZ-sys-def} Let $\mathcal{A}$ be as given in (\ref{eq:defA}).
The period integral (\ref{eq:peridIntI}) with its integrand (\ref{eq:1-over-f1f2f3})
satisfies GKZ $\mathcal{A}$-hypergeometric system \cite{GKZ1} with
exponents $\beta=\,^{t}(-\frac{1}{2},-\frac{1}{2},-\frac{1}{2},0,0).$\end{prop}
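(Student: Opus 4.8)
The plan is to verify directly that $\bar\omega$, in the factorized form (\ref{eq:1-over-f1f2f3}), is annihilated by the two families of operators defining the GKZ $\mathcal{A}$-hypergeometric system: the \emph{toric} (box) operators $\square_\ell:=\prod_{\ell_v>0}\partial_{c_v}^{\ell_v}-\prod_{\ell_v<0}\partial_{c_v}^{-\ell_v}$ indexed by $\ell=(\ell_v)\in L=\mathrm{Ker}\,\varphi_{\mathcal{A}}$ of (\ref{eq:lattice-L}), and the \emph{Euler} operators $Z_i:=\sum_{v\in\mathcal{A}}(v)_i\,c_v\partial_{c_v}-\beta_i$ for $i=1,\dots,5$. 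Here $c_v$ is the coefficient of the monomial attached to $v\in\mathcal{A}$; writing $\mathcal{A}=\{v_{kj}=e_k\times m_{kj}\}$ as in the regrouping of $\mathcal{A}$ displayed after (\ref{eq:1-over-f1f2f3}), the index $k\in\{1,2,3\}$ records which factor $f_k$ the coefficient sits in and $m_{kj}\in\mZ^2$ is the $(\tx,\ty)$-exponent of its monomial, so that $f_k=\sum_j c_{kj}\,t^{m_{kj}}$ (shorthand for $\tx^{(m_{kj})_1}\ty^{(m_{kj})_2}$) and the integrand is $g\,\frac{d\tx\wedge d\ty}{\tx\ty}$ with $g=(f_1f_2f_3)^{-1/2}$.

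For the toric relations, the key is the elementary identity that applying a product of derivatives inside a single factor, $\prod_j\partial_{c_{kj}}^{n_{kj}}$ with $n_{kj}\ge0$ and $p_k:=\sum_j n_{kj}$, to $f_k^{-1/2}$ produces $\big(\prod_{r=0}^{p_k-1}(-\tfrac12-r)\big)\,t^{\,\sum_j n_{kj}m_{kj}}\,f_k^{-1/2-p_k}$. Since $\partial_{c_{kj}}$ only touches $f_k$, the positive part $\prod_{\ell_v>0}\partial_{c_v}^{\ell_v}$ of $\square_\ell$ applied to $g$ factorizes over $k$, and likewise the negative part. The condition $\ell\in L$ then splits, under $\mZ^5=\mZ^3\times\mZ^2$, into (i) $\sum_j\ell_{kj}=0$ for each $k$ (the first three coordinates), which equates the total derivative count $p_k$ in the positive and negative parts factor by factor, hence the Pochhammer prefactors and the powers $f_k^{-1/2-p_k}$; and (ii) $\sum_{k,j}\ell_{kj}m_{kj}=0$ (the last two coordinates), which equates the total inserted monomial. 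Thus the two parts of $\square_\ell$ agree as functions of $(\tx,\ty)$, the operator annihilates the integrand pointwise, and integrating over $C$ yields $\square_\ell\,\bar\omega=0$.

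For the Euler operators I would start from the logarithmic-derivative identity $\sum_j c_{kj}\partial_{c_{kj}}g=-\tfrac12\,f_k^{-1}\big(\sum_j c_{kj}t^{m_{kj}}\big)g=-\tfrac12\,g$ for each $k$. As the first three coordinates of $v_{kj}=e_k\times m_{kj}$ equal $\delta_{ik}$, the operator $Z_i$ acts on $g$ as multiplication by $-\tfrac12-\beta_i$ for $i=1,2,3$, vanishing precisely when $\beta_i=-\tfrac12$. For $i=4,5$, where $(v_{kj})_i$ is a component of $m_{kj}$, the same computation gives $\sum_{k,j}(m_{kj})_{i-3}\,c_{kj}\partial_{c_{kj}}g=\tx\partial_{\tx}g$ (respectively $\ty\partial_{\ty}g$), the generator of the torus rescaling $(\tx,\ty)\mapsto(s_1\tx,s_2\ty)$ leaving $\frac{d\tx\wedge d\ty}{\tx\ty}$ invariant. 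Since $\tx\partial_{\tx}g\cdot\frac{d\tx\wedge d\ty}{\tx\ty}=d\big(g\,\tfrac{d\ty}{\ty}\big)$ is exact, it integrates to $0$ over the closed cycle $C$, so $Z_i\bar\omega=0$ with $\beta_i=0$ for $i=4,5$. Together these constraints force $\beta={}^t(-\tfrac12,-\tfrac12,-\tfrac12,0,0)$.

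The algebra above is the $\gamma=\tfrac12$ analogue of the standard derivation in \cite{HKTY}; the real content is analytic, and I expect the main obstacle to be the exactness (Stokes) step used for $Z_4,Z_5$. Because $g=(f_1f_2f_3)^{-1/2}$ is multivalued, the primitive $g\,\tfrac{d\ty}{\ty}$ and the vanishing of its integral over $\partial C$ are only meaningful after passing to the double cover, i.e.\ on the resolved K3 surface $X$ of Section 2 where the integrand lifts to a single-valued holomorphic $2$-form and $C\in H_2(X,\mZ)$ is genuinely closed. One must therefore invoke the explicit transcendental cycles of \cite{YoshidaEtal} to exclude boundary contributions along the branch divisor $\{\prod_{i}\ell_i=0\}$; granting this, together with the (routine) justification of differentiation under the integral sign, completes the verification.
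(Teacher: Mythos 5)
Your verification is correct and is precisely the ``easy exercise'' the paper leaves to the reader: its own proof consists only of the remark that the claim follows from the invariance properties of the period integral under the torus action $T$ as in \cite{HKTY,HLY}, with the square root accounting for the shift to $\beta={}^{t}(-\frac{1}{2},-\frac{1}{2},-\frac{1}{2},0,0)$, and your box-operator and Euler-operator computations are exactly that derivation written out. The Stokes-type worry you flag for $Z_{4},Z_{5}$ is what the torus-invariance phrasing is meant to sidestep: rather than exhibiting a single-valued primitive, one checks the finite form of the statement --- $\bar{\omega}$ is unchanged under $(\tx,\ty)\mapsto(s_{1}\tx,s_{2}\ty)$ with the coefficients rescaled by the corresponding character, since $\frac{d\tx\wedge d\ty}{\tx\ty}$ is invariant and the cycle class deforms continuously --- and then differentiates at $s=1$, exactly the device the paper uses to derive relation (ii) of $E(3,6)$ from $\bar{\omega}(ga)=(\det g)^{-1}\bar{\omega}(a)$.
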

\begin{proof}
This follows easily by looking at invariance properties under the
torus action $T$ of the period integral, see \cite{HKTY,HLY}. The
only difference from there is in the exponent $\beta$, which is explained
by the square root in the denominator. We leave the derivation as
an easy exercise for the reader. \end{proof}
\begin{rem}
\label{rem:six-choices}From the first line to the second line of
(\ref{eq:1-over-f1f2f3}), the division by $\tx\ty$ has been made
by making a choice which factor of $\tx\ty$ goes to which factor
of the three parentheses. There are six combinatorially different
ways in total. Recall that we have chosen the isomorphism $T\simeq(\mC^{*})^{5}$
for the weights (\ref{eq:tableWeights}) so that the resulting set
$\mathcal{A}$ is compatible with the choice made in (\ref{eq:1-over-f1f2f3}).
We will return to this point in the next subsection. 
\end{rem}

\subsection{Boundary points (LCSLs) of the GKZ system\label{sub:GKZ-LCSLs}}

A fundamental object in mirror symmetry is a special boundary point
in the moduli space of Calabi-Yau manifolds, called a LCSL, which
appears as the intersection of certain normal crossing boundary divisors
of suitable compactification of the moduli space. In the case of Calabi-Yau
complete intersections in toric varieties, it is well known that such
compactifications are naturally obtained by finding a suitable toric
resolution of the compactification $\mathcal{M}_{SecP}$ \cite{HKTY,HLY}. 

\para{Resolutions of ${\mathcal M}_{SecP}$} \label{para:C_NE} Under
the identification $L\equiv\mZ^{4}$ in (\ref{eq:Appendix-Pi4}),
we have 
\begin{equation}
C_{NE}={\rm Cone}\left\{ \begin{matrix}(1,0,0,0),(0,1,-1,1),(1,-1,1,0),(0,0,0,1)\\
(0,0,1,-1),(-1,1,0,0)
\end{matrix}\right\} ,\label{eq:C_NE-4dim}
\end{equation}
for the cone $C_{NE}\subset L_{\mathbb{R}}$. 
\begin{lem}
\label{lem:C_NE-Resolutions} (1) The dual cone $C_{NE}^{\vee}$ is
generated by $\rho_{1},\cdots,\rho_{5}$ where 
\[
\begin{aligned} & \rho_{1}=(1,1,0,0), &  & \rho_{2}=(0,1,1,0), &  & \rho_{3}=(0,0,1,1),\\
 & \rho_{4}=(1,1,1,0), &  & \rho_{5}=(0,1,1,1).
\end{aligned}
\]
 (2) Without adding extra ray generators, there are two possible decompositions
of $C_{NE}^{\vee}$, namely,
\begin{equation}
C_{NE}^{\vee}=\sigma_{1}^{(1)}\cup\sigma_{2}^{(1)}=\sigma_{1}^{(2)}\cup\sigma_{2}^{(2)}\cup\sigma_{3}^{(2)}\label{eq:decomp-CNE-dual}
\end{equation}
with 
\[
\begin{aligned} & \sigma_{i}^{(1)}={\rm Cone}\left\{ \rho_{1},\rho_{2},\rho_{3},\rho_{6-i}\right\} \;(i=1,2)\text{ and }\\
 & \sigma_{i}^{(2)}{\rm =Cone}\left\{ \rho_{j},\rho_{k},\rho_{4},\rho_{5}\right\} \;\;(\{i,j,k\}=\{1,2,3\}).
\end{aligned}
\]
(3) All $\sigma_{i}^{(k)}$ are smooth simplicial cones, and hence
each in (2) defines a resolution of the singularity at the origin
of $\mathrm{Spec}\,\mathbb{C}[C_{NE}\cap L]$. The first and the second
decompositions in (2) correspond, respectively, to the left and the
right resolutions shown Fig.1. \end{lem}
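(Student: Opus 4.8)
The plan is to treat $C_{NE}$ through its facet structure and to recognize it as the cone over the Segre variety $\mathbb{P}^1\times\mathbb{P}^2$; this single observation organizes all three parts. For part (1) I would first record the six-by-five table of pairings $\langle\rho_i,v_j\rangle$, where $v_1,\dots,v_6$ are the generators of $C_{NE}$ listed in (\ref{eq:C_NE-4dim}), and check that every entry is $\geq 0$, so that each $\rho_i$ lies in $C_{NE}^\vee$. The same table shows that $\rho_1,\rho_2,\rho_3$ each annihilate a four-element subset of the $v_j$ spanning a three-dimensional hyperplane, while $\rho_4,\rho_5$ each annihilate a three-element subset spanning a hyperplane; hence the five vectors are precisely the inward normals to five facets of $C_{NE}$ — three "quadrilateral" facets and two "triangular" facets. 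This is exactly the facet pattern of the cone over a triangular prism $\Delta_1\times\Delta_2$, i.e. the cone over the Segre embedding of $\mathbb{P}^1\times\mathbb{P}^2$, consistent with the determinantal description $\mathrm{rk}(\cdots)\leq 1$ in Proposition \ref{prop:SecP}. Since for a full-dimensional polyhedral cone the extreme rays of the dual are exactly the facet normals, this identifies $C_{NE}^\vee=\mathrm{Cone}(\rho_1,\dots,\rho_5)$; equivalently one verifies $\mathrm{Cone}(\rho_1,\dots,\rho_5)^\vee=C_{NE}$ and invokes biduality of closed convex cones.

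For part (2) the key is the unique linear relation among the five rays. Solving $\sum_i a_i\rho_i=0$ directly yields the single circuit
\[
\rho_1+\rho_2+\rho_3=\rho_4+\rho_5,
\]
with positive part $\{\rho_1,\rho_2,\rho_3\}$ and negative part $\{\rho_4,\rho_5\}$. A vector configuration whose only dependence is one circuit $(Z^+,Z^-)$ admits exactly two triangulations using no new rays, obtained by deleting one ray of $Z^-$ (giving $|Z^-|=2$ maximal cones) or one ray of $Z^+$ (giving $|Z^+|=3$ maximal cones); these are precisely the families $\sigma_1^{(1)},\sigma_2^{(1)}$ and $\sigma_1^{(2)},\sigma_2^{(2)},\sigma_3^{(2)}$. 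Concretely, the covering $C_{NE}^\vee=\sigma_1^{(1)}\cup\sigma_2^{(1)}$ holds because any $x=\sum c_i\rho_i$ with $c_i\geq 0$ can be rewritten, by subtracting $\min(c_4,c_5)(\rho_4+\rho_5-\rho_1-\rho_2-\rho_3)$, so as to force $c_4=0$ or $c_5=0$; the two pieces meet along $\mathrm{Cone}(\rho_1,\rho_2,\rho_3)$ with disjoint interiors, since the functional vanishing on $\rho_1,\rho_2,\rho_3$ takes opposite (nonzero) values on $\rho_4$ and $\rho_5$ by the circuit relation. The three-cone decomposition is handled symmetrically, its three pieces arranged around the common ridge $\mathrm{Cone}(\rho_4,\rho_5)$.

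For part (3) each $\sigma_i^{(k)}$ is generated by four of the $\rho$'s, so I would simply record the determinant of the four generating vectors and check that each equals $\pm 1$; this simultaneously gives linear independence (hence simpliciality) and unimodularity (hence smoothness) with respect to the lattice $L\simeq\mathbb{Z}^4$. Since $C_{NE}^\vee$ is the cone in the dual lattice whose semigroup ring is $\mathbb{C}[C_{NE}\cap L]$, any fan subdividing $C_{NE}^\vee$ without new rays defines a proper birational toric morphism onto $\mathrm{Spec}\,\mathbb{C}[C_{NE}\cap L]$ that is an isomorphism over the open torus; smoothness of all maximal cones makes the source smooth, so each decomposition in (2) resolves the singularity at the torus-fixed origin. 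Geometrically these are the two small resolutions of the determinantal (Segre-cone) singularity, contracting respectively onto the $\mathbb{P}^1$- and $\mathbb{P}^2$-rulings, which is exactly why one decomposition has two maximal cones and the other three, and is the source of the four-dimensional flip referred to in the introduction.

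The step I expect to demand the most genuine argument, rather than determinant bookkeeping, is the exhaustiveness in parts (1) and (2): proving that $\rho_1,\dots,\rho_5$ are \emph{all} the extreme rays of $C_{NE}^\vee$ (equivalently, that $C_{NE}$ has no further facets), and that the two listed subdivisions are the only ones adding no rays. Both are clarified at once by the prism/Segre identification, which pins down the facet lattice of $C_{NE}$ and exhibits the single circuit $\rho_1+\rho_2+\rho_3=\rho_4+\rho_5$ governing the flip; everything else reduces to the routine verification of the pairing table and the five unimodularity determinants.
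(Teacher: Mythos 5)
Your proposal is correct, and all the numerical facts it relies on check out: the pairing table does show each $\rho_i$ nonnegative on all six generators of $C_{NE}$ with zero sets $\{v_3,v_4,v_5,v_6\}$, $\{v_1,v_2,v_3,v_4\}$, $\{v_1,v_2,v_5,v_6\}$, $\{v_2,v_4,v_6\}$, $\{v_1,v_3,v_5\}$, each of rank three; the unique relation is indeed $\rho_1+\rho_2+\rho_3=\rho_4+\rho_5$; and the five $4\times4$ determinants are all $\pm1$. The paper's own proof of this lemma is the single sentence ``All the claims can be verified by explicit calculations,'' so you are not diverging from its method so much as supplying the argument it omits, and you do so with an organizing principle the paper never states in the proof: the identification of $C_{NE}$ with the cone over the prism $\Delta_1\times\Delta_2$ (equivalently, of $\mathrm{Spec}\,\mathbb{C}[C_{NE}\cap L]$ with the Segre cone, which the paper does record separately via the rank condition in Proposition \ref{prop:SecP}), and the single circuit governing the two triangulations. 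This buys you the two points that raw computation leaves implicit --- that the five $\rho_i$ exhaust the facet normals, and that the two listed subdivisions are the \emph{only} ones without new rays --- since both follow formally from the prism facet lattice and the standard classification of triangulations of a circuit; it also explains a priori why one resolution has two maximal cones and the other three (the torus-fixed points of the exceptional $\mathbb{P}^1$ versus $\mathbb{P}^2$), matching Fig.~1. Two cosmetic cautions: the smoothness in (3) is with respect to the dual lattice $L^{\vee}\simeq\mathbb{Z}^4$ rather than $L$ itself (harmless under the identification $\pi_4$, but worth saying), and for completeness the exhaustiveness of the facet list should be closed off by the biduality check $\mathrm{Cone}(\rho_1,\dots,\rho_5)^{\vee}=C_{NE}$ or by verifying that each ridge of the prism lies on exactly two of the five listed facets --- you flag exactly this as the one step demanding genuine argument, which is the right instinct.
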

\begin{proof}
All the claims can be verified by explicit calculations.\end{proof}
\begin{prop}
\label{prop:MsecP-Resolution}Choose a subdivision of (\ref{eq:decomp-CNE-dual}),
independently, at each of the six affine charts of $\mathcal{M}_{SecP}$
corresponding to the six singular vertices in Proposition \ref{prop:SecP}.
For each choice of the subdivisions, we have a resolution of $\mathcal{M}_{SecP}$,
and the difference of the choice in (\ref{eq:decomp-CNE-dual}) is
represented by four dimensional flip shown in Fig. 1. \end{prop}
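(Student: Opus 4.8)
The plan is to localize the whole question at the six singular maximal cones of the secondary fan and to check that the two subdivisions of \eqref{eq:decomp-CNE-dual} glue, independently chart by chart, into one global refinement. By Proposition~\ref{prop:SecP} the fan of $\mathcal{M}_{SecP}$ has $108$ maximal cones, of which $102$ are already smooth and the remaining six are copies of $C_{NE}^{\vee}=\mathrm{Cone}\{\rho_{1},\dots,\rho_{5}\}$. The first thing I would record is that both decompositions in Lemma~\ref{lem:C_NE-Resolutions} introduce only \emph{interior} walls and \emph{no} new ray: decomposition $(1)$ inserts the single $3$-dimensional wall $\mathrm{Cone}\{\rho_{1},\rho_{2},\rho_{3}\}$ that separates $\rho_{4}$ from $\rho_{5}$, while decomposition $(2)$ inserts the $2$-dimensional axis $\mathrm{Cone}\{\rho_{4},\rho_{5}\}$ together with the three walls $\mathrm{Cone}\{\rho_{i},\rho_{4},\rho_{5}\}$. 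Since these walls sit in the strict interior of $C_{NE}^{\vee}$ and leave its boundary facets untouched, subdividing one singular cone affects neither the other maximal cones nor the faces it shares with its neighbours; as the interiors of distinct maximal cones are disjoint, any of the $2^{6}$ independent choices yields a well-defined fan refining $Sec\,\Sigma(\mathcal{A})$.

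Granting this gluing, the induced toric morphism $\pi:\tilde{\mathcal{M}}\to\mathcal{M}_{SecP}$ is proper and birational because $Sec\,\Sigma(\mathcal{A})$ is complete and $\pi$ comes from a refinement. Smoothness of the source is immediate: the $102$ untouched charts are smooth by Proposition~\ref{prop:SecP}, and each singular cone is replaced by the smooth simplicial cones $\sigma_{i}^{(k)}$ of Lemma~\ref{lem:C_NE-Resolutions}(3). Hence every choice is a resolution of $\mathcal{M}_{SecP}$, which is the first assertion.

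For the second assertion I would switch decomposition $(1)\leftrightarrow(2)$ at a single chart, fixing the other five, and compare the two resolutions $\tilde{\mathcal{M}}^{(1)}$ and $\tilde{\mathcal{M}}^{(2)}$. The five rays obey the unique relation $\rho_{1}+\rho_{2}+\rho_{3}=\rho_{4}+\rho_{5}$, so $\{\rho_{1},\dots,\rho_{5}\}$ is a circuit and the two decompositions are exactly its two triangulations (delete $\{\rho_{4},\rho_{5}\}$, resp.\ $\{\rho_{1},\rho_{2},\rho_{3}\}$). As neither refinement adds a ray, both $\pi^{(k)}$ are small and $\tilde{\mathcal{M}}^{(1)}\dashrightarrow\tilde{\mathcal{M}}^{(2)}$ is an isomorphism in codimension one. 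Passing to the star quotient of each central cone, I would identify the loci lying over the torus-fixed point of $C_{NE}^{\vee}$: the quotient $N/\langle\rho_{1},\rho_{2},\rho_{3}\rangle$ carries the two opposite rays $\bar\rho_{4},\bar\rho_{5}$, so $C:=V(\mathrm{Cone}\{\rho_{1},\rho_{2},\rho_{3}\})\cong\mathbb{P}^{1}$, whereas $N/\langle\rho_{4},\rho_{5}\rangle$ carries $\bar\rho_{1},\bar\rho_{2},\bar\rho_{3}$ with $\bar\rho_{1}+\bar\rho_{2}+\bar\rho_{3}=0$, so $S:=V(\mathrm{Cone}\{\rho_{4},\rho_{5}\})\cong\mathbb{P}^{2}$. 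These are precisely the two small resolutions of the determinantal cone $\mathcal{M}_{SecP}^{Loc}$ over the Segre variety $\mathbb{P}^{1}\times\mathbb{P}^{2}$, and the map between them is the four dimensional modification drawn in Fig.~1.

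The hard part is to certify that this modification is a genuine \emph{flip} rather than a flop. I would do this by computing canonical degrees from the wall relation: with $K=-\sum_{\rho}D_{\rho}$ and the circuit $\rho_{4}+\rho_{5}-\rho_{1}-\rho_{2}-\rho_{3}=0$, the contracted curve $C\cong\mathbb{P}^{1}$ satisfies $K\cdot C=+1$, while a line $\ell\subset S\cong\mathbb{P}^{2}$ satisfies $K\cdot\ell=-1$. The opposite signs exclude a flop and exhibit $\tilde{\mathcal{M}}^{(1)}\dashrightarrow\tilde{\mathcal{M}}^{(2)}$ as the flip of the $K$-negative small contraction $\tilde{\mathcal{M}}^{(2)}\to\mathcal{M}_{SecP}$. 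The genuine subtlety, and the reason expert advice in birational geometry is needed, is that the base $\mathcal{M}_{SecP}^{Loc}$ is not $\mathbb{Q}$-Gorenstein — the same relation $\rho_{1}+\rho_{2}+\rho_{3}=\rho_{4}+\rho_{5}$ shows there is no $m\in L_{\mathbb{Q}}$ with $\langle m,\rho_{i}\rangle=-1$ for all $i$ — so one cannot compare discrepancies on the base and must instead set up the flip relatively over $\mathcal{M}_{SecP}$, using that $K$ is Cartier on the two smooth models $\tilde{\mathcal{M}}^{(k)}$.
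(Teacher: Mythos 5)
Your argument is correct and follows the same route as the paper's (very terse) proof: localize at the six singular maximal cones of the secondary fan, resolve by the two ray-free subdivisions of Lemma~\ref{lem:C_NE-Resolutions}, and recognize the modification over the Segre cone of Proposition~\ref{prop:SecP} as the standard $\mathbb{P}^{1}\leftrightarrow\mathbb{P}^{2}$ small modification of Fig.~1; you supply the gluing, circuit, and star-quotient details that the paper leaves to ``explicit construction.'' Your closing computation of the $K$-degrees and the observation that the base is not $\mathbb{Q}$-Gorenstein correctly anticipate the issue the paper only settles later, in Theorem~\ref{thm:Resolutions} and Claim~\ref{cla:flip}, by passing to a klt log pair (note only that, with the usual convention, the flip of the $K$-negative contraction runs from the $\mathbb{P}^{2}$ side to the $\mathbb{P}^{1}$ side, the reverse of the direction you wrote).
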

\begin{proof}
Our proof is based on the explicit construction of the secondary fan
$Sec\,\Sigma(\mathcal{A})$, which consists of 108 four dimensional
cones. Since all cones except the six are smooth, we obtain a resolution
by choosing a subdivision for each of the six cones as claimed. The
four dimensional flip should be clear in the form of the singularity
expressed by the rank condition in Proposition \ref{prop:SecP}. 
\end{proof}
We shall write $\widetilde{\mathcal{M}}_{SecP}$ and $\widetilde{\mathcal{M}}_{SecP}^{+}$,
respectively, for the resolution where all six local resolutions are
of the left type and the right type in Fig.1. 

\begin{figure}[h] 
\resizebox{9.5cm}{!}{
\includegraphics[trim=3cm 4cm 0cm 1.5cm,
clip=true]{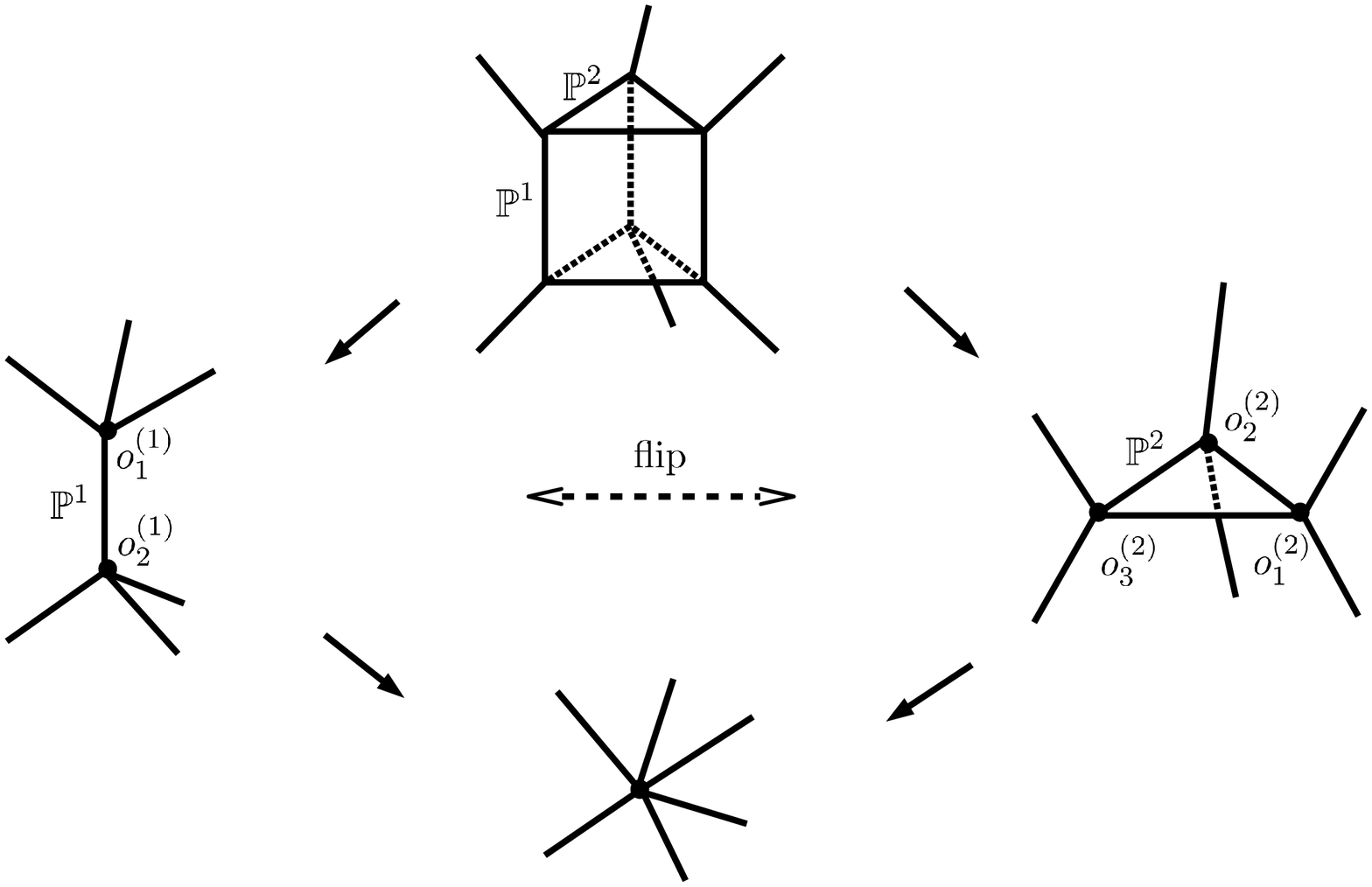}}
\caption{{\bf Fig.1} Four dimensional flip in the resolutions of $\mathrm{Spec}\,{\mathbb C}[C_{NE}\cap L]$. All boundary points $o_i^{(a)}$ are LCSLs. }
\end{figure}

~

\para{Power series solutions and Picard-Fuchs equations} In this
subsection, we give the power series solutions of the GKZ $\mathcal{A}$-hypergeometric
system near the LCSL in the the affine chart $\mathrm{Spec}\,\mathbb{C}[C_{NE}\cap L]$
(see Appendix \ref{sec:Appendix-SecA}). To simplify the form of the
power series, we normalize the period integral (\ref{eq:peridIntI})
as follows: 
\begin{equation}
\omega_{C}(a):=\sqrt{a_{0}b_{0}c_{0}}\;\bar{\omega}_{C}(a).\label{eq:wC-def}
\end{equation}

\begin{defn}
Let $(\sigma_{i}^{(k)})^{\vee}$ be the dual cone of $\sigma_{i}^{(k)}$
in (\ref{eq:decomp-CNE-dual}), which is smooth. We represent $(\sigma_{i}^{(k)})^{\vee}$
in $L_{\mathbb{R}}$ by using (\ref{eq:Appendix-Pi4}). Then in terms
of its primitive generators, we have 
\[
(\sigma_{i}^{(k)})^{\vee}=\mathrm{Cone}\left\{ \ell^{(1)},\ell^{(2)},\ell^{(3)},\ell^{(4)}\right\} .
\]
Let $z_{m}:=\mathtt{a^{\ell^{(m)}}}=\prod_{i=1}^{9}\mathtt{a}_{i}^{\ell_{i}^{(m)}}$
be the affine coordinates on $\mathrm{Spec}\,\mathbb{C}[(\sigma_{i}^{(m)})^{\vee}\cap L]$
with arranging the parameters 
\[
\mathtt{a}:=(-a_{0,}-b_{0},-c_{0},a_{1},a_{2},b_{1},b_{2},c_{1},c_{2}).
\]
Then the hypergeometric series associated to $\sigma_{i}^{(a)}$ is
defined to be 
\begin{equation}
\omega_{0}(z)=\sum_{n_{1},...,n_{4}\geq0}\frac{1}{\Gamma(\frac{1}{2})^{3}}\frac{\prod_{i=1}^{3}\Gamma(n\cdot\ell_{i}+\frac{1}{2})}{\prod_{i=4}^{9}\Gamma(n\cdot\ell_{i}+1)}z_{1}^{n_{1}}z_{2}^{n_{2}}z_{3}^{n_{3}}z_{4}^{n_{4}},\label{eq:w0-z}
\end{equation}
where $n\cdot\ell:=\sum_{k}n_{k}\ell^{(k)}$ (see \cite{HKTY,HLY}). 

~

The hypergeometric series $w_{0}(z)$ is the unique power series solution
of the GKZ $\mathcal{A}$-hypergeometric system on $\mathcal{M}_{SecP}$
near a LCSL point. We now use the method developed in \cite{HLY}
to determine the complete set of the Picard-Fuchs differential operators.
To show the calculations, we take the affine chart $\mathrm{Spec}\,\mathbb{C}[(\sigma_{1}^{(1)})^{\vee}\cap L]$
as an example. It should be clear that the constructions below are
parallel for the other cases $\mathrm{Spec}\,\mathbb{C}[(\sigma_{i}^{(k)})^{\vee}\cap L]$. 

As the primitive generator of $(\sigma_{1}^{(1)})^{\vee}\subset L_{\mR}$,
we first obtain 
\[
\begin{matrix}\ell^{(1)}=(-1,\;\;\,0,\;\;\,0,\;\;\,1,\;\;\,0,\;\;\,0,\;\;\,0,\;\;\,1,-1),\\
\ell^{(2)}=(\;\;\,0,-1,\;\;\,0,\;\;\,1,-1,\;\;\,1,\;\;\,0,\;\;\,0,\;\;\,0),\\
\ell^{(3)}=(\;\;\,0,\;\;\,0,-1,\;\;\,0,\;\;\,0,\;\;\,1,-1,\;\;\,1,\;\;\,0),\\
\ell^{(4)}=(\;\;\,0,\;\;\,0,\;\;\,0,-1,\;\;\,1,-1,\;\;\,1,-1,\;\;\,1).
\end{matrix}
\]
The power series (\ref{eq:w0-z}) now becomes 
\begin{equation}
\omega_{0}(z)=\sum_{n_{1},n_{2},n_{3},n_{4}\geq0}c(n_{1},n_{2},n_{3},n_{4})z_{1}^{n_{1}}z_{2}^{n_{2}}z_{3}^{n_{3}}z_{4}^{n_{4}}\label{eq:w0-local-sol}
\end{equation}
with the coefficients $c(n)=c(n_{1},n_{2},n_{3},n_{4})$ given by
\[
c(n):=\frac{1}{\Gamma(\frac{1}{2})^{3}}\frac{\Gamma(n_{1}+\frac{1}{2})\Gamma(n_{2}+\frac{1}{2})\Gamma(n_{3}+\frac{1}{2})}{\Pi_{_{i=1}}^{3}\Gamma(n_{4}-n_{i}+1)\cdot\Pi_{1\leq j<k\leq3}\Gamma(n_{j}+n_{k}-n_{4}+1)}.
\]
Picard-Fuchs differential equations may be characterized by the set
of differential operators which annihilate the power series $\omega_{0}(z)$.
In the present case, since the period integrals (normalized by $\sqrt{a_{0}b_{0}c_{0}}$)
satisfy the GKZ $\mathcal{A}$-hypergeometric system we can construct
them from the elements $\ell\in(\sigma_{1}^{(1)})^{\vee}\cap L$ .
The method in \cite{HLY} produces finite set of operators in terms
of Gr\"obner basis. 

Let $\ell=\ell_{+}-\ell_{-}$ be the unique decomposition under the
conditions $\ell_{\pm}\in\mZ_{\geq0}^{9}$ and ${\rm supp}(\ell_{+})\cap{\rm supp}(\ell_{-})=\phi$.
For such decomposition $\ell=\ell_{+}-\ell_{-}$, we define the GKZ
differential operator by $\square_{\ell}=\left(\frac{\partial\;}{\partial\mathtt{a}}\right)^{\ell_{+}}-\left(\frac{\partial\;}{\partial\mathtt{a}}\right)^{\ell_{-}}$.
We use the multi-degree convention $\mathtt{a}^{m}:=\Pi_{i=1}^{9}\mathtt{a}_{i}^{m_{i}}$
as above, and similarly for $\left(\frac{\partial\;}{\partial\mathtt{a}}\right)^{m}$.
Following the reference \cite{HLY}, we define 
\[
\mathtt{a}^{\ell_{+}}\square_{l}=\mathtt{a}^{\ell_{+}}\left(\frac{\partial\;}{\partial\mathtt{a}}\right)^{\ell_{+}}-\mathtt{a}^{\ell}\cdot\mathtt{a}^{\ell_{-}}\left(\frac{\partial\;}{\partial\mathtt{a}}\right)^{\ell_{-}},
\]
which we can express in terms of $\theta_{\mathtt{a}_{i}}:=\mathtt{a}_{i}\frac{\partial\;}{\partial\mathtt{a}_{i}}$
and a monomial of $z_{m}:=\mathtt{a}^{\ell^{(m)}}$ since $\ell\in(\sigma_{1}^{(1)})^{\vee}\cap L$
and $\ell^{(m)}$'s generate the cone. Our period integrals $\omega_{C}(a)$
are related to GKZ hypergeometric series by the factor $\sqrt{a_{0}b_{0}c_{0}}$
as in (\ref{eq:wC-def}), hence the differential operators 
\[
\mathcal{D}_{\ell}:=(a_{0}b_{0}c_{0})^{\frac{1}{2}}\left(\mathtt{a}^{\ell_{+}}\square_{l}\right)(a_{0}b_{0}c_{0})^{-\frac{1}{2}}
\]
annihilate the normalized period integrals $\omega_{C}(a).$ In Appendix
\ref{sec:Appendix-PF-eqs}, we list a minimal set of differential
operators which determine the period integrals around the origin of
the affine chart $\mathrm{Spec}\,\mathbb{C}[(\sigma_{1}^{(1)})^{\vee}\cap L]$. \end{defn}
\begin{prop}
\label{prop:LCSL-GKZ}The period integral $\omega_{0}(z)$ in (\ref{eq:w0-local-sol})
is the only power series solution near a LCSL given by the origin
of $\mathrm{Spec}\,\mathbb{C}[(\sigma_{1}^{(1)})^{\vee}\cap L]\simeq\mathbb{C}^{4}$.
The origin is the special point (LCSL) where all other linearly independent
solutions contain some powers of $\log z_{i}\;(i=1,...,4).$ \end{prop}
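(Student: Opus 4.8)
The proposition splits into two assertions---uniqueness of the holomorphic solution and the logarithmic nature of the remaining ones---and I would prove them in that order, exploiting that $(\sigma_{1}^{(1)})^{\vee}$ is a \emph{smooth} cone by Lemma~\ref{lem:C_NE-Resolutions}(3). Because $\sigma_{1}^{(1)}$ is smooth and simplicial, its generators $\ell^{(1)},\dots,\ell^{(4)}$ form a $\mZ$-basis of $L$, so $z_{1},\dots,z_{4}$ are honest local coordinates on $\mathrm{Spec}\,\mC[(\sigma_{1}^{(1)})^{\vee}\cap L]\simeq\mC^{4}$ centred at the torus-fixed point. For the first assertion I would substitute a formal series $\sum_{n\in\mZ_{\geq0}^{4}}a(n)\,z^{n}$ into the operators $\square_{\ell}$ together with the homogeneity (Euler) operators carrying the exponent $\beta=\,^{t}(-\tfrac12,-\tfrac12,-\tfrac12,0,0)$. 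The Euler operators fix the leading exponent to be $\rho=0$ after the normalization \eqref{eq:wC-def}, while the operator $a^{\ell_{+}}\square_{\ell}$, expressed through $\theta_{a_{i}}$ and the monomials $z_{m}=a^{\ell^{(m)}}$, relates $a(n)$ to coefficients $a(n')$ with $n'$ strictly smaller in the partial order induced by the cone. Smoothness guarantees that this recursion is lower-triangular with respect to the total degree $|n|=n_{1}+\cdots+n_{4}$, so once $a(0)$ is fixed every higher coefficient is forced; up to scale there is then a single power-series solution, which one identifies with $\omega_{0}(z)$ of \eqref{eq:w0-local-sol} by comparing the resulting recursion with the Gamma-function coefficients $c(n)$.

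For the second assertion I would run the generalized Frobenius method of \cite{HLY,HKTY}. The plan is to deform the series to $\omega(z;\rho)=\sum_{n\geq0}c(n+\rho)\,z^{n+\rho}$, where $c(n+\rho)$ is obtained from the formula for $c(n)$ by replacing each integer argument $n_{i}$ with $n_{i}+\rho_{i}$ in the Gamma functions. Applying a box operator produces $\square_{\ell}\,\omega(z;\rho)=P_{\ell}(\rho)\,z^{\rho-\ell_{-}}$ for a polynomial $P_{\ell}$ that vanishes at $\rho=0$; therefore the derivatives $\partial_{\rho}^{\mathbf{k}}\omega(z;\rho)\big|_{\rho=0}$ are again solutions for the appropriate multi-indices $\mathbf{k}$. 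Since $\partial_{\rho_{i}}z^{\rho}\big|_{\rho=0}=(\log z_{i})\,z^{0}$, each such derivative manifestly contains powers of $\log z_{i}$. As the GKZ $\mathcal{A}$-system admits more than one independent solution (its holonomic rank exceeds one), these logarithmic series furnish the remaining solutions, and together with the uniqueness from the first part they show that every solution independent of $\omega_{0}$ carries a log. This maximal unipotency---a unique holomorphic solution with all others logarithmic---is precisely the defining property that marks the origin as a LCSL.

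I expect the main obstacle to lie in the triangularity argument underpinning uniqueness, i.e.\ in verifying that the origin is genuinely maximally unipotent rather than merely a boundary point. This rests on the smoothness of $(\sigma_{1}^{(1)})^{\vee}$ from Lemma~\ref{lem:C_NE-Resolutions}, ensuring that the exponents $n\cdot\ell$ increase strictly with $|n|$ so that no competing holomorphic solution can begin at a shifted exponent. A second delicate point is the \emph{resonance} caused by $\alpha_{1}=\cdots=\alpha_{6}=\tfrac12$: the arguments $n_{4}-n_{i}+1$ and $n_{j}+n_{k}-n_{4}+1$ can become non-positive integers, where $1/\Gamma$ vanishes and truncates the support of $c(n)$. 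I would note that this vanishing keeps $\omega_{0}$ well defined and, in the Frobenius step, requires checking that $P_{\ell}(\rho)$ vanishes to the correct order so that the $\rho$-derivatives remain honest solutions; convergence of $\omega_{0}$ near the origin then follows from standard Gamma-ratio estimates, or alternatively from its origin as the period integral \eqref{eq:peridIntI} normalized by \eqref{eq:wC-def}.
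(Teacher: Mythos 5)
Your overall strategy---direct verification of maximal unipotency from the differential operators, followed by the generalized Frobenius method for the logarithmic solutions---is the same route the paper takes (its proof consists of exactly these two pointers, to Appendix \ref{sec:Appendix-PF-eqs} and to \cite{HLTYpartII}). However, your uniqueness argument has a gap at the decisive step. Writing each operator as $\mathcal{D}=p(\theta)+(\text{monomial in }z)\cdot q(\theta)$, with $p$ its leading $\theta$-part at $z=0$, the recursion for a candidate series $\sum a(n)z^{n}$ reads $p(n)\,a(n)=-q(\cdot)\,a(n')$ with $n'<n$; this determines $a(n)$ from lower coefficients only if, for every $n\in\mZ_{\geq0}^{4}\setminus\{0\}$, some $p_{i}(n)\neq0$. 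Equivalently, one must check that the common zero locus of the leading parts of $\mathcal{D}_{1},\dots,\mathcal{D}_{9}$ of Appendix \ref{sec:Appendix-PF-eqs} is the single point $\theta=0$. Neither the smoothness of $(\sigma_{1}^{(1)})^{\vee}$ nor the monotonicity of the exponents $n\cdot\ell$ in $|n|$ delivers this: smoothness only guarantees that $z_{1},\dots,z_{4}$ are genuine coordinates, and a competing power-series solution would begin at a nonzero lattice point $n_{0}$, which is excluded precisely because $n_{0}$ is not an indicial root---not by any triangularity of the recursion. The check itself is short (for instance, the leading parts of $\mathcal{D}_{4},\mathcal{D}_{5},\mathcal{D}_{6}$ force $\theta_{i}=\theta_{j}=\theta_{4}$ for at least two indices $i,j\in\{1,2,3\}$, after which $\mathcal{D}_{1},\dots,\mathcal{D}_{3}$ and $\mathcal{D}_{7},\dots,\mathcal{D}_{9}$ force all $\theta_{k}=0$), and it is exactly what the paper means by ``verified by the set of differential operators in Appendix \ref{sec:Appendix-PF-eqs}''. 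It also underwrites your second assertion, since it shows the indicial ideal is supported at the origin with full multiplicity, so every solution is a power series plus logarithmic corrections.

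Your treatment of the second claim via $\rho$-derivatives of the deformed $\Gamma$-series is the standard mechanism of \cite{HLY,HKTY} that the paper invokes, and your cautionary remarks about resonance (the $1/\Gamma$ factors truncating the support of $c(n)$, and the required order of vanishing of $P_{\ell}(\rho)$ at $\rho=0$) are the right ones; the paper defers the actual closed formula for the logarithmic solutions to \cite{HLTYpartII}, so nothing more is demanded there.
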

\begin{proof}
The first claim can be verified by the set of differential operators
in Appendix \ref{sec:Appendix-PF-eqs}. For the second claim, we will
find a closed formula for the logarithmic solutions. The closed formula
will be described in detail in \cite{HLTYpartII}. 
\end{proof}
Calculations are completely parallel for all other origins $o_{i}^{(k)}$
of the affine charts $\mathrm{Spec}\,\mathbb{C}[(\sigma_{i}^{(k)})^{\vee}\cap L]$
of the resolutions. One can verify the corresponding properties in
the above proposition hold for all $o_{i}^{(k)}$. 
\begin{rem}
\label{rem:six-choices-II} As noted in Remark \ref{rem:six-choices},
the six singular vertices in the secondary polytope $Sec(\mathcal{A})$
come from the combinatorial symmetry when reading $\mathcal{A}$ from
the period integral (\ref{eq:1-over-f1f2f3}). Hence, up to permutations
among the variables $a_{i}$, $b_{j}$ and $c_{k}$, respectively,
the hypergeometric series which we define for each of the six affine
chart have the same form as (\ref{eq:w0-local-sol}). Therefore the
Picard-Fuchs differential operators have the same form, up to suitable
conjugations by monomial factors, for all six affine charts of the
form $\mathrm{Spec}\,\mathbb{C}[C_{NE}\cap L]$ from the vertices
$T_{1},...,T_{6}$. Based on this simple property, we will have the
same Fourier expansions for the certain lambda functions when expanded
around the boundary points. Details are described in \cite{HLTYpartII}.
\end{rem}
~

~

\section{\textbf{\textup{\label{sec:M33-from-Periodint}$\mathcal{M}_{3,3}$
from period integrals }}}

As presented in \cite{HKTY,HLY} for the case of Calabi-Yau complete
intersections in toric varieties, GKZ hypergeometric systems provide
powerful means for calculating various predictions of mirror symmetry.
One may naively expects that this is also the case for $E(3,6)$.
However, it turns out that we need to further understand relationships
between the compactifications $\mathcal{M}_{SecP}$ , $\mathcal{M}_{3,3}$
and finally $\mathcal{M}_{6}$. In this section, we will find that
the compactification$\mathcal{M}_{3,3}$ arises naturally from evaluating
period integrals. We will see that $\mathcal{M}_{SecP}$ is actually
a partial resolution of $\mathcal{M}_{3,3}$.

\subsection{Power series from residue calculations}

Recall that, when determining Picard-Fuchs differential operators
in the previous section, we have normalized the period integral (\ref{eq:peridIntI})
by $\omega_{C}(a)=\sqrt{a_{0}b_{0}c_{0}}\;\bar{\omega}_{C}(a).$ Under
this normalization, by making use of the expansion $\frac{1}{\sqrt{1+P}}=\sum r_{n}P^{n}$,
we can evaluate the period integral over the torus cycle $\gamma=\left\{ \vert\tx\vert=\vert\ty\vert=\varepsilon\right\} $
as follows 
\[
\begin{aligned} & \begin{aligned}\int\frac{\sqrt{a_{0}b_{0}c_{0}}}{\sqrt{(a_{0}+\frac{a_{2}}{\tx}+a_{1}\frac{\ty}{\tx})(b_{0}+\frac{b_{1}}{\ty}+b_{2}\frac{\tx}{\ty})(c_{0}+c_{1}\tx+c_{2}\ty)}}\frac{d\tx d\ty}{\tx\ty}\end{aligned}
\\
 & =\int\sum_{n,m,k}r_{n}\left(\frac{a_{2}}{a_{0}}\frac{1}{\tx}+\frac{a_{1}}{a_{0}}\frac{\ty}{\tx}\right)^{n}r_{m}\left(\frac{b_{1}}{b_{0}}\frac{1}{\ty}+\frac{b_{2}}{b_{0}}\frac{\tx}{\ty}\right)^{m}r_{k}\left(\frac{c_{1}}{c_{0}}\tx+\frac{c_{2}}{c_{0}}\ty\right)^{k}\frac{d\tx d\ty}{\tx\ty}
\end{aligned}
\]
by formally evaluating the residues. 
\begin{lem}
\label{lem:w0-series}We have the period integrals over the torus
cycle $\gamma$ as a power series of 
\begin{equation}
x:=\frac{a_{2}c_{1}}{a_{0}c_{0}},y:=\frac{a_{1}b_{2}}{a_{0}b_{0}},z:=\frac{b_{1}c_{2}}{b_{0}c_{0}},u:=-\frac{a_{1}b_{1}c_{1}}{a_{0}b_{0}c_{0}},v:=-\frac{a_{2}b_{2}c_{2}}{a_{0}b_{0}c_{0}}\label{eq:vars-xyzuv}
\end{equation}
which satisfy the equation $xyz=uv$. Eliminating the powers of $v$,
the result is formally expressed by 
\begin{equation}
\omega_{0}(x,y,z,u):=\sum_{l=-\infty}^{\infty}\sum_{\;\;\;m,n,k\geq\max\left\{ 0,-l\right\} }c(n,m,k,l)x^{n}y^{m}z^{k}u^{l},\label{eq:w0-xyzu}
\end{equation}
where 
\[
c(n,m,k,l):=\frac{1}{\Gamma(\frac{1}{2})^{3}}\frac{\Gamma(m+n+l+\frac{1}{2})\Gamma(n+k+l+\frac{1}{2})\Gamma(m+k+l+\frac{1}{2})}{m!\,n!\,k!\,(m+l)!\,(n+l)!\,(k+l)!}.
\]
\end{lem}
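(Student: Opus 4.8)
The plan is to expand the normalized integrand directly and read off the torus residue term by term. First I would insert the partial gauge (\ref{eq:E3-gaugeA}) into (\ref{eq:1-over-f1f2f3}) and factor $a_0,b_0,c_0$ out of the three parenthetical factors; the resulting $\sqrt{a_0b_0c_0}$ in the denominator cancels the normalization in (\ref{eq:wC-def}), leaving $\prod_{i=1}^{3}(1+P_i)^{-1/2}\,\tfrac{d\tx\,d\ty}{\tx\ty}$ with
\[
P_1=\tfrac{a_2}{a_0}\tfrac{1}{\tx}+\tfrac{a_1}{a_0}\tfrac{\ty}{\tx},\quad P_2=\tfrac{b_1}{b_0}\tfrac{1}{\ty}+\tfrac{b_2}{b_0}\tfrac{\tx}{\ty},\quad P_3=\tfrac{c_1}{c_0}\tx+\tfrac{c_2}{c_0}\ty .
\]
Expanding $(1+P_i)^{-1/2}=\sum_{\nu\ge0}r_\nu P_i^{\nu}$ with $r_\nu=\frac{(-1)^\nu\Gamma(\nu+\frac12)}{\Gamma(\frac12)\,\nu!}$ and applying the multinomial theorem to each $P_i^{\nu_i}$ turns the integrand into a Laurent series in $\tx,\ty$. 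Evaluating the residue over $\gamma$ extracts the coefficient of $\tx^0\ty^0$, which imposes two linear relations on the six exponents $(i,j,p,q,s,t)$ of $\tfrac{a_2}{a_0},\tfrac{a_1}{a_0},\tfrac{b_1}{b_0},\tfrac{b_2}{b_0},\tfrac{c_1}{c_0},\tfrac{c_2}{c_0}$, namely $-i-j+q+s=0$ and $j-p-q+t=0$ (vanishing of the total $\tx$- and $\ty$-degrees).

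For the first assertion I would observe that these two relations coincide with the conditions that the corresponding monomial be invariant under the residual torus $T$ acting through the weights in (\ref{eq:tableWeights}) (the three weights of $a_0b_0c_0$-degree are automatically satisfied by ratios, and the remaining two reproduce exactly the two degree relations). Hence the surviving monomials are precisely the $T$-invariant ones; their semigroup has rank four and is generated by the five ratios $x,y,z,u,v$ of (\ref{eq:vars-xyzuv}), since each of their exponent vectors solves the two relations and the only additive relation among the five, equating the sum of the vectors of $x,y,z$ to that of $u,v$, is exactly the multiplicative relation $xyz=uv$. This identifies the residue with a formal power series supported on $\{xyz=uv\}$, an affine chart of $\mathcal{M}_{3,3}$.

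To reach the explicit formula (\ref{eq:w0-xyzu}) I would eliminate $v$ via $v=xyz/u$ and use the bijection between constraint-solutions and indices $(n,m,k,l)\in\mZ^4$ given by
\[
i=n,\quad q=m,\quad t=k,\quad s=n+l,\quad j=m+l,\quad p=k+l .
\]
One checks directly that this substitution satisfies the two relations, sends each solution to the single monomial $x^{n}y^{m}z^{k}u^{l}$, and that nonnegativity of $(i,j,p,q,s,t)$ is equivalent to $n,m,k\ge0$ together with $l\ge-\min\{n,m,k\}$, i.e.\ exactly the range $m,n,k\ge\max\{0,-l\}$, $l\in\mZ$. The term carried by such a solution is $r_{\nu_1}r_{\nu_2}r_{\nu_3}\binom{\nu_1}{i}\binom{\nu_2}{p}\binom{\nu_3}{s}$ with $\nu_1=i+j=n+m+l$, $\nu_2=p+q=k+m+l$, $\nu_3=s+t=n+k+l$; since $r_\nu\binom{\nu}{i}=\frac{(-1)^\nu\Gamma(\nu+\frac12)}{\Gamma(\frac12)\,i!\,j!}$ (and analogously for the other two factors), the Gamma factors become $\Gamma(n+m+l+\tfrac12)\Gamma(m+k+l+\tfrac12)\Gamma(n+k+l+\tfrac12)$ and the factorial denominator becomes $i!j!p!q!s!t!=n!\,(m+l)!\,(k+l)!\,m!\,(n+l)!\,k!$, matching $c(n,m,k,l)$.

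The one point I would handle with care, and which I expect to be the main obstacle, is the cancellation of signs. The product $r_{\nu_1}r_{\nu_2}r_{\nu_3}$ contributes $(-1)^{\nu_1+\nu_2+\nu_3}=(-1)^{2(n+m+k)+3l}=(-1)^{l}$, while re-expressing the surviving $a,b,c$-monomial through the signed variables produces a further $(-1)^{l}$, because $u=-\tfrac{a_1b_1c_1}{a_0b_0c_0}$ and $v=-\tfrac{a_2b_2c_2}{a_0b_0c_0}$ carry minus signs and the net $u$-exponent is $l$. These combine to $(-1)^{2l}=1$, so the coefficient is the manifestly positive ratio of Gamma functions in the lemma. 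Everything else is routine multinomial and residue bookkeeping, carried out formally as in \cite{HKTY,HLY}.
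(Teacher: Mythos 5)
Your proposal is correct and follows essentially the same route as the paper, which simply declares the residue evaluation "straightforward" (citing Batyrev--Cox and HLY) and notes that the closed form of $c(n,m,k,l)$ follows from the formal GKZ solutions; you have filled in exactly that computation, and your index bijection, range of summation, and sign cancellation $(-1)^{\nu_1+\nu_2+\nu_3}\cdot(-1)^{l}=(-1)^{2l}=1$ all check out.
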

\begin{proof}
The evaluation of the residues is straightforward (cf. \cite{Batyrev-Cox,HLY}).
The closed formula of the coefficients $c(n,m,k,l)$ can be deduced
from the formal solutions of the GKZ system \cite{GKZ1}. \end{proof}
\begin{prop}
\label{prop:w0-xyz-uv}The Laurent series $\omega_{0}(x,y,z,u)$ defines
a regular solutions around a point $[0,0,0,0,0,1]\in\mathcal{M}_{3,3}$
under the following identification of the parameters $x,y,z,u,v$
with the affine coordinate of $\mathcal{M}_{3,3}$:
\[
\left\{ [x,y,z,u,v,1]\in\mathbb{P}^{5}\mid xyz=1\,uv\right\} \subset\mathcal{M}_{3,3}.
\]
\end{prop}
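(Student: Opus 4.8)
The plan is to show that the formal Laurent series $\omega_{0}(x,y,z,u)$ from Lemma~\ref{lem:w0-series} is a genuine holomorphic function near the specified point and that it solves the relevant differential system there. The core idea is that this series is nothing but the hypergeometric series $\omega_{0}(z)$ of the GKZ system written in the coordinates $(x,y,z,u,v)$, and that these coordinates are precisely the restriction to $\mathcal{M}_{3,3}$ of the monomial coordinates on $\mathcal{M}_{SecP}$, which by the earlier sections is a (partial) resolution of $\mathcal{M}_{3,3}$. So the regularity near $[0,0,0,0,0,1]$ should follow by matching the series against the already-established solution $\omega_{0}(z)$ of Proposition~\ref{prop:LCSL-GKZ}.

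First I would make the identification of coordinates precise. The five quantities $x,y,z,u,v$ in \eqref{eq:vars-xyzuv} are $T$-invariant monomials in the nine parameters $\mathtt{a}=(-a_{0},-b_{0},-c_{0},a_{1},a_{2},b_{1},b_{2},c_{1},c_{2})$, so each corresponds to a lattice vector in $L=\mathrm{Ker}\,\varphi_{\mathcal{A}}$; one checks directly that they satisfy the single relation $xyz=uv$, exhibiting them as affine coordinates on the toric hypersurface $\{X_{1}X_{2}X_{3}=X_{4}X_{5}X_{6}\}\subset\mathbb{P}^{5}$ that defines $\mathcal{M}_{3,3}$, with the point $[0,0,0,0,0,1]$ being the distinguished torus-fixed point where $x=y=z=u=v=0$. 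The nontrivial content of the statement is then twofold: that the doubly-infinite sum in \eqref{eq:w0-xyzu} (the $l$-index ranges over all of $\mathbb{Z}$) actually converges to a holomorphic function in a neighborhood of the origin, and that after eliminating $v$ via $v=xyz/u$ the surviving expression defines a single-valued regular function there rather than something with a pole or branch along $u=0$.

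Next I would address convergence and regularity by comparison with the GKZ series. Setting $l=n_{4}$ and re-indexing, the coefficient $c(n,m,k,l)$ coincides with the coefficient $c(n)$ of $\omega_{0}(z)$ in \eqref{eq:w0-z} under the correspondence of the $\sigma_{i}^{(k)}$-charts of $\mathcal{M}_{SecP}$ with the local toric coordinates $z_{1},\dots,z_{4}$; the ratio test / Gamma-function growth estimates used in \cite{HKTY,HLY} to establish convergence of $\omega_{0}(z)$ near the LCSL then transfer verbatim, since the radius of convergence statement is a purely combinatorial property of the coefficients. The factorials in the denominator of $c(n,m,k,l)$, in particular the factors $(m+l)!,(n+l)!,(k+l)!$ which vanish (are infinite) for $l<-\min(m,n,k)$, enforce precisely the summation range $m,n,k\geq\max\{0,-l\}$ and make the formally negative powers of $u$ truncate correctly, so that after substituting $v=xyz/u$ the expression reorganizes into a power series in $x,y,z,u,v$ with no genuine negative exponents.

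The main obstacle I expect is handling the $l<0$ part of the sum cleanly, i.e.\ verifying that the negative powers of $u$ appearing in \eqref{eq:w0-xyzu} are genuinely cancelled by positive powers of $v$ (through the relation $v=xyz/u$) so that the function extends holomorphically across $u=0$ on $\mathcal{M}_{3,3}$ rather than merely on the torus. I would resolve this by working on the resolution $\widetilde{\mathcal{M}}_{SecP}$ (or the chart $\mathrm{Spec}\,\mathbb{C}[(\sigma_{1}^{(1)})^{\vee}\cap L]$): there the series is manifestly a power series $\omega_{0}(z_{1},z_{2},z_{3},z_{4})$ in nonnegative powers of the smooth local coordinates, hence holomorphic, and the claim on $\mathcal{M}_{3,3}$ then follows by pushing this forward along the birational morphism $\widetilde{\mathcal{M}}_{SecP}\to\mathcal{M}_{3,3}$ and observing that the image of the LCSL $o_{1}^{(1)}$ is exactly $[0,0,0,0,0,1]$. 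Finally, that $\omega_{0}$ \emph{solves} the system near this point is inherited from Proposition~\ref{prop:GKZ-sys-def} and Proposition~\ref{prop:LCSL-GKZ}, since the annihilating operators $\mathcal{D}_{\ell}$ are invariant under the coordinate change and the residue evaluation in Lemma~\ref{lem:w0-series} produces a solution by construction.
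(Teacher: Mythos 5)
Your second paragraph already contains the paper's entire proof: the point is simply that the residue computation of Lemma \ref{lem:w0-series} produces, \emph{before} eliminating $v$, a power series in the five monomials $x,y,z,u,v$ with nonnegative exponents, and since these monomials satisfy $xyz=uv$ they are exactly the affine coordinates of $\mathcal{M}_{3,3}$ at $[0,0,0,0,0,1]$; regularity there is then immediate. The doubly infinite range of $l$ in (\ref{eq:w0-xyzu}) is purely an artifact of the elimination of $v$, and the reciprocal factorials $(m+l)!,(n+l)!,(k+l)!$ encode the reinstated powers of $v$. So the statement you flag as ``the main obstacle'' is not an obstacle at all once one reads the five-variable form of the series.

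The detour you then propose to resolve it --- establishing holomorphy of $\omega_{0}(z_{1},\dots,z_{4})$ near the LCSL $o_{1}^{(1)}$ on the resolution and ``pushing forward'' along $\widetilde{\mathcal{M}}_{SecP}\to\mathcal{M}_{3,3}$ --- has a genuine gap. Descent of a holomorphic function through a proper birational morphism onto a normal target (via $f_{*}\mathcal{O}_{\widetilde{\mathcal{X}}}=\mathcal{O}_{\mathcal{X}}$) requires the function to be defined on the \emph{entire} preimage of a neighborhood of the singular point, i.e.\ on a neighborhood of the whole exceptional fiber over the origin of $\mathcal{M}_{3,3}^{Loc}$, which is a union of surfaces. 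A polydisc of convergence around the single point $o_{1}^{(1)}$ covers only a small piece of that fiber, so holomorphy near one LCSL upstairs does not imply regularity at $[0,0,0,0,0,1]$ downstairs. Relatedly, your identification ``$c(n,m,k,l)=c(n)$ under $l=n_{4}$'' is not literal: the chart coordinates $z_{m}=\mathtt{a}^{\ell^{(m)}}$ of $\mathrm{Spec}\,\mathbb{C}[(\sigma_{1}^{(1)})^{\vee}\cap L]$ are different monomials from $x,y,z,u$, so the two series are related only after a nontrivial monomial change of variables and re-indexing of the exponent lattice. Drop the third paragraph and conclude directly from the five-variable power series of Lemma \ref{lem:w0-series}, as the paper does.
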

\begin{proof}
By the definitions of $x,y,z,u,v$, we have the relation $xyz=uv$.
The claim is clear since we have a power series of $x,y,z,u,v$ in
Lemma \ref{lem:w0-series} (before eliminating $v$). \end{proof}
\begin{rem}
Recall that we have made a choice, among six combinatorial possibilities,
from the first line to the second line of (\ref{eq:1-over-f1f2f3})
as noted in Remark \ref{rem:six-choices}. It is easy to deduce that,
if we change our choice there, we will have the same power series
but with different variables, which corresponds to expansions around
different coordinate points of $\mathbb{P}^{5}$ (cf. Remark \ref{rem:six-choices-II}).
Namely, when we reduce the $GL(3,\mathbb{C})$ symmetry to the diagonal
tori as in (\ref{eq:E3-gaugeA}), we may consider that the period
integral (\ref{eq:1-over-f1f2f3}) is defined on 
\[
\mathcal{M}_{3,3}=\left\{ X_{0}X_{1}X_{2}=X_{3}X_{4}X_{5}\right\} \subset\mathbb{P}^{5}.
\]

\end{rem}
~

\subsection{\label{sub:M33Loc}$\mathcal{M}_{SecP}$ and $\mathcal{M}_{3,3}$ }

We have seen in Proposition \ref{prop:LCSL-GKZ} that the special
boundary points (LCSLs) appear in the resolutions of $\mathcal{M}_{SecP}$
. Here it turns out that $\mathcal{M}_{SecP}$ gives a partial resolution
of $\mathcal{M}_{3,3}$. 
\begin{prop}
\label{prop:lines-in-M33}The toric hypersurface $\mathcal{M}_{3,3}\subset\mathbb{P}^{5}$
contains all coordinate lines of $\mathbb{P}^{5}.$ The singularities
of $\mathcal{M}_{3,3}$ consist of six coordinate points $p_{i}(i=0,..,5)$
of $\mathbb{P}^{5}$ and nine coordinate lines $\overline{p_{i}p_{j}}$
$(0\leq i\leq2,3\leq j\leq5)$. \end{prop}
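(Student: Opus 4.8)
The plan is to establish both assertions by direct computation with the defining polynomial $F:=X_{0}X_{1}X_{2}-X_{3}X_{4}X_{5}$ on $\mathbb{P}^{5}$ with homogeneous coordinates $[X_{0}:\cdots:X_{5}]$, where $p_{i}$ denotes the $i$-th coordinate point. For the containment of the coordinate lines I would use a simple pigeonhole observation: the coordinate line $\overline{p_{a}p_{b}}$ is the locus on which every coordinate except $X_{a}$ and $X_{b}$ vanishes, and each of the two monomials of $F$ is a product of three \emph{distinct} coordinates. Since only the two indices $a,b$ are available, neither $\{0,1,2\}$ nor $\{3,4,5\}$ can be contained in $\{a,b\}$, so both monomials — and hence $F$ — vanish identically on $\overline{p_{a}p_{b}}$. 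This shows at once that all $\binom{6}{2}=15$ coordinate lines lie on $\mathcal{M}_{3,3}$.

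For the singular locus I would apply the Jacobian criterion. The six partials of $F$ are the $2$-fold products $X_{1}X_{2},\,X_{0}X_{2},\,X_{0}X_{1}$ and $X_{4}X_{5},\,X_{3}X_{5},\,X_{3}X_{4}$, so a point is singular precisely when all six vanish (one checks that $F$ itself then vanishes automatically, so the equation $F=0$ imposes nothing extra). The vanishing of $X_{0}X_{1}=X_{0}X_{2}=X_{1}X_{2}=0$ is equivalent to the rank-one condition that at most one of $X_{0},X_{1},X_{2}$ is nonzero, and symmetrically for $X_{3},X_{4},X_{5}$. The solution set in $\mathbb{C}^{6}\setminus\{0\}$ is therefore the union of the nine coordinate $2$-planes spanned by one of $e_{0},e_{1},e_{2}$ and one of $e_{3},e_{4},e_{5}$; projectivizing yields exactly the nine lines $\overline{p_{i}p_{j}}$ with $0\le i\le 2$ and $3\le j\le 5$. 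The remaining six coordinate lines (those joining two of $p_{0},p_{1},p_{2}$ or two of $p_{3},p_{4},p_{5}$) fail this condition: for instance on $\overline{p_{0}p_{1}}$ the product $X_{0}X_{1}$ is nonzero away from the endpoints, so the surface is smooth there.

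The computation is elementary, so I do not expect a genuine obstacle; the only point requiring a little care is bookkeeping, namely translating the vanishing of the six quadratic partials into the ``at most one nonzero coordinate in each triple'' condition and recognizing the resulting coordinate planes as the nine lines. I would also take care to explain why the six coordinate points are listed \emph{separately} from the nine lines even though they lie on them: each $p_{i}$ is the common intersection of exactly three of the nine lines (for example $p_{0}\in\overline{p_{0}p_{3}}\cap\overline{p_{0}p_{4}}\cap\overline{p_{0}p_{5}}$), so the six points form the zero-dimensional, more degenerate stratum of the singular locus, matching the description of the zero- and one-dimensional boundary components of the Baily-Borel-Satake compactification recalled earlier.
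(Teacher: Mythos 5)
Your proof is correct, and it is essentially the argument the paper has in mind: the paper simply states that "all claims are easy to verify from the defining equation of the hypersurface" and omits the details, which are exactly your pigeonhole observation for the containment of coordinate lines and the Jacobian criterion computation identifying the singular locus as the nine planes $\langle e_i, e_j\rangle$ with $0\le i\le 2$, $3\le j\le 5$. Your added remark explaining why the six coordinate points are listed separately (as the zero-dimensional stratum where three of the nine lines meet) is a helpful clarification consistent with the paper's later discussion of the boundary components.
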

\begin{proof}
Since all claims are easy to verify from the defining equation of
the hypersurface, we omit the proofs.
\end{proof}
\begin{figure}[h] 
\resizebox{9cm}{!}{
\includegraphics[trim=0cm 4cm 0cm 4cm,
clip=true]{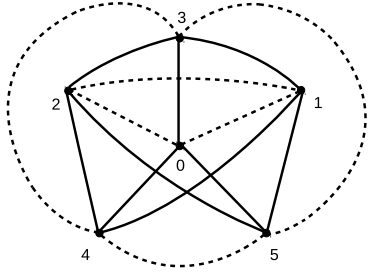}}
\caption{{\bf Fig.2} Singularities of ${\mathcal M}_{3,3}$. Solid lines represent the coordinate lines $\overline{p_ip_j}$ $(0\leq i\leq 2,3 \leq j \leq 5)$ along which ${\mathcal M}_{3,3}$ is singular. Broken lines are the other coordinate lines contained in ${\mathcal M}_{3,3}$.  }
\end{figure}

~

~

The following lemma is our first step to relate $\mathcal{M}_{SecP}$
and $\mathcal{M}_{3,3}$. To state it, we recall that the the secondary
polytope $Sec(\mathcal{A})$ has 108 vertices, whose associated cones
define coordinate rings of the affine charts of $\mathcal{M}_{SecP}$.
Of the 108 vertices, the six vertices $\mathcal{V}$ given in Appendix
\ref{sec:Appendix-SecA} are singular while the rest are smooth (see
Proposition \ref{prop:SecP}). 
\begin{lem}
We have $\mathcal{M}_{3,3}=\mathbb{P}_{Conv(\mathcal{V})}$. \end{lem}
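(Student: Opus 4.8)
The plan is to realize both sides as the projective toric variety of a single six-point configuration and to match them through the invariant that really controls such a variety: its \emph{lattice of affine relations}. Concretely, I would regard the six singular vertices $T_1,\dots,T_6$ of $Sec(\mathcal{A})$ from Appendix \ref{sec:Appendix-SecA} as lattice points of $L\cong\mathbb{Z}^4$ (under the identification (\ref{eq:Appendix-Pi4})), lift them to $\widehat{T}_i:=(1,T_i)\in\mathbb{Z}\oplus L\cong\mathbb{Z}^5$, and reduce the whole statement to the single assertion that the lattice of relations $\ker\big([\widehat{T}_1\,\cdots\,\widehat{T}_6]:\mathbb{Z}^6\to\mathbb{Z}^5\big)$ has rank one and is generated by a primitive vector whose sign pattern, after relabeling, is $(+1,+1,+1,-1,-1,-1)$. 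The conceptual point is that a configuration lifted to height one is determined up to lattice-affine isomorphism by this relation lattice whenever it is saturated, and saturation here is exactly primitivity of the generator.

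Granting that assertion, the conclusion follows in a few steps. First, since the sole affine dependence among the $T_i$ has three positive and three negative coefficients, no $T_i$ is a convex combination of the others, so the six points are in convex position and $Conv(\mathcal{V})$ is a genuine four-dimensional polytope with exactly these six vertices. Second, because the relation lattice is the saturated rank-one lattice generated by $w=(1,1,1,-1,-1,-1)$, the toric ideal of the configuration $\{\widehat{T}_i\}$ is the principal ideal generated by the single binomial $X_1X_2X_3-X_4X_5X_6$ (higher multiples of $w$ give binomials already divisible by this one). Hence the projective subvariety of $\mathbb{P}^5$ it defines equals $\mathcal{M}_{3,3}=\{X_1X_2X_3=X_4X_5X_6\}$, and $\mathbb{P}_{Conv(\mathcal{V})}$ is the normalization of this subvariety. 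Finally, by Proposition \ref{prop:lines-in-M33} the singular locus of $\mathcal{M}_{3,3}$ (nine lines and six points) has codimension three in the fourfold $\mathcal{M}_{3,3}$; as a hypersurface is Cohen--Macaulay, Serre's criterion gives normality of $\mathcal{M}_{3,3}$, so its normalization map is an isomorphism and $\mathbb{P}_{Conv(\mathcal{V})}=\mathcal{M}_{3,3}$.

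The main obstacle is the rank-one-and-primitive statement for the relation lattice: it is not enough to display one cubic relation among the $T_i$; one must verify that the full relation lattice has rank one and that $w$ \emph{generates} it, since a proper overlattice or a nonprimitive generator would force additional binomials and a different, non-isomorphic variety. This reduces to an elementary-divisor (Smith normal form) computation on the explicit $5\times6$ integer matrix $[\widehat{T}_1\,\cdots\,\widehat{T}_6]$, which is finite and routine once the coordinates of $\mathcal{V}$ from Appendix \ref{sec:Appendix-SecA} are substituted. The only care needed is to use the identification $L\cong\mathbb{Z}^4$ of (\ref{eq:Appendix-Pi4}) consistently and, via Proposition \ref{prop:SecP}, to confirm that $\mathcal{V}$ is precisely the set of six \emph{singular} vertices, so that the local cone $C_{NE}$ at each $T_i$ indeed matches the singularity of $\mathcal{M}_{3,3}$ along its nine coordinate lines. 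Everything else is a direct consequence of this one computation.
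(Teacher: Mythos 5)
Your proposal is correct and follows essentially the same route as the paper: the paper's one\nobreakdash-line proof simply reads off from Appendix \ref{sec:Appendix-SecA} that the six points of $\mathcal{V}$ are $0$ together with the exponent vectors of $x,y,z,u,v$ from (\ref{eq:vars-xyzuv}), whose unique affine relation is $xyz=uv$, and you have supplied the details (rank\nobreakdash-one saturated relation lattice, principal toric ideal, normality of the cubic hypersurface) that the paper leaves as ``straightforward.'' One small simplification: the saturation you single out as the main obstacle is automatic, because the relation lattice is the kernel of a homomorphism into the torsion\nobreakdash-free group $\mathbb{Z}\oplus L$, so exhibiting the primitive relation $\widehat{T}_2+\widehat{T}_3+\widehat{T}_4=\widehat{T}_1+\widehat{T}_5+\widehat{T}_6$ and checking that the $5\times 6$ matrix has rank $5$ already completes the key computation without any Smith normal form.
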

\begin{proof}
This follows from the explicit calculation of $Conv(\mathcal{A})$.
We list the six vertices $\mathcal{V}$ of $Sec(\mathcal{A})$ in
Appendix \ref{sec:Appendix-SecA}. From the list, it is straightforward
to see the claim. 
\end{proof}
By the obvious symmetry of $\mathcal{M}_{3,3}$, we may restrict our
attention to the local affine geometry 
\[
\mathcal{M}_{3,3}^{Loc}:=\left\{ xyz=uv\right\} \subset\mC^{5},
\]
and deduce its relation to the resolution $\widetilde{\mathcal{M}}_{SecP}$.
If we read the exponents of the variables in (\ref{eq:vars-xyzuv}),
we can write the toric singularity $\mathcal{M}_{3,3}^{Loc}$ using
the lattice (\ref{eq:lattice-L}) as 
\[
\mathcal{M}_{3,3}^{Loc}=\mathrm{Spec}\,\mC[C_{0}\cap L],
\]
where 
\begin{equation}
C_{0}:=\mathrm{Cone}\left\{ \begin{aligned}(-1,\;\;\,0,-1,0,1,0,0,1,0),\\
(-1,-1,\,\;\;0,1,0,0,1,0,0),\\
(\;\;\,0,-1,-1,0,0,1,0,0,1),\\
(-1,-1,-1,1,0,1,0,1,0),\\
(-1,-1,-1,0,1,0,1,0,1)\,
\end{aligned}
\right\} .\label{eq:cone-C0}
\end{equation}
Note that the five generators $\ell\in L$ of $C_{0}$ listed here
express the the affine coordinates $x,y,z,u,v$ in (\ref{eq:vars-xyzuv})
by the monomials $\mathtt{a}^{\ell}$. Under (\ref{eq:Appendix-Pi4}),
we can also write $C_{0}$ by 
\[
C_{0}=\mathrm{Cone}\left\{ (0,1,0,0),(1,0,0,1),(0,0,1,0),(1,0,1,0),(0,1,0,1\}\right\} \subset\mathbb{R}^{4}.
\]
Note, from the form of $C_{0}$ in (\ref{eq:cone-C0}) and $C_{NE}$
in Appendix \ref{sec:Appendix-SecA}, that $C_{0}$ and $C_{NE}$
are cones from the same vertex $T_{1}$ of $Sec(\mathcal{A})$. 
\begin{lem}
\label{lem:C_NE-C_0} We have $C_{0}\subset C_{NE}\subset L_{\mathbb{R}}$
for the cone $C_{NE}$. \end{lem}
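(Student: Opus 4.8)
The plan is to verify the inclusion through the dual-cone inequalities rather than by writing each generator of $C_0$ as an explicit conic combination of the generators of $C_{NE}$. Since $C_{NE}$ is a rational polyhedral cone in $L_{\mathbb{R}} \cong \mathbb{R}^4$, biduality gives $C_{NE} = (C_{NE}^\vee)^\vee$, so a point $x \in L_{\mathbb{R}}$ lies in $C_{NE}$ if and only if $\langle \rho, x \rangle \ge 0$ for every $\rho \in C_{NE}^\vee$. By part (1) of Lemma~\ref{lem:C_NE-Resolutions}, the dual cone $C_{NE}^\vee$ is generated by the five vectors $\rho_1 = (1,1,0,0)$, $\rho_2 = (0,1,1,0)$, $\rho_3 = (0,0,1,1)$, $\rho_4 = (1,1,1,0)$, $\rho_5 = (0,1,1,1)$, so membership of $x$ in $C_{NE}$ reduces to the five inequalities $\langle \rho_i, x \rangle \ge 0$ $(i = 1,\dots,5)$.

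Next, because $C_0$ is the conic hull of its listed generators and each condition $\langle \rho_i, \cdot \rangle \ge 0$ is preserved under non-negative linear combinations, it suffices to test these five inequalities on the generators of $C_0$ alone. Using the identification (\ref{eq:Appendix-Pi4}) to express both cones in the common $\mathbb{R}^4$ coordinates — in which the generators of $C_0$ read $(0,1,0,0)$, $(1,0,0,1)$, $(0,0,1,0)$, $(1,0,1,0)$, $(0,1,0,1)$ — I would tabulate the $5 \times 5$ array of pairings $\langle \rho_i, g_j \rangle$. Every entry turns out to be $0$, $1$, or $2$; in particular all are non-negative, which establishes $C_0 \subset C_{NE}$. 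The outer inclusion $C_{NE} \subset L_{\mathbb{R}}$ is immediate from the definitions, since $C_{NE}$ is a cone inside $L_{\mathbb{R}}$ by construction in (\ref{eq:C_NE-4dim}).

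The only point requiring care is bookkeeping: one must apply the identification $L \equiv \mathbb{Z}^4$ of (\ref{eq:Appendix-Pi4}) consistently to the nine-coordinate generators of $C_0$ in (\ref{eq:cone-C0}) and to the $\rho_i$, so that the pairings are computed in a single coordinate system. Beyond this consistency check there is no combinatorial or conceptual obstacle, so I expect the argument to be a short finite verification. I would favor the dual-cone formulation over a direct expression of the $g_j$ in the generators of $C_{NE}$, since the latter requires solving several non-negative linear systems whereas the former collapses to reading off signs in one small integer table.
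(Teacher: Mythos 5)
Your proof is correct, but it takes a genuinely different route from the paper's. The paper's argument is primal and essentially tautological once one unwinds the definitions in Appendix \ref{sec:Appendix-SecA}: both $C_{0}$ and $C_{NE}$ are cones at the common vertex $T_{1}$ of the secondary polytope, $C_{0}$ being generated by the edge vectors $\frac{1}{4}(\psi_{T_{i}}-\psi_{T_{1}})$ to the six singular vertices and $C_{NE}$ by the edge vectors to all $108$ vertices, so the generating rays of $C_{0}$ form a subset of those of $C_{NE}$ and the inclusion is immediate. You instead pass to the dual: using biduality for polyhedral cones and the description of $C_{NE}^{\vee}$ from Lemma \ref{lem:C_NE-Resolutions}(1), you reduce the inclusion to the non-negativity of the $5\times5$ table of pairings $\langle\rho_{i},g_{j}\rangle$, which indeed consists only of the values $0$, $1$, $2$ (I checked the table; e.g.\ $\langle\rho_{4},(1,0,1,0)\rangle=2$ and $\langle\rho_{5},(0,1,0,1)\rangle=2$ are the only entries equal to $2$). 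Your approach buys a self-contained, checkable certificate that uses only the four-dimensional coordinate data and does not invoke the secondary-polytope provenance of the two cones; the paper's approach buys brevity and makes the inclusion conceptually transparent, but presupposes that the reader has internalized the construction of $C_{0}$ and $C_{NE}$ as cones from the same vertex. Your caveat about applying the identification (\ref{eq:Appendix-Pi4}) consistently is the right one to flag, and it is resolved by the fact that $\pi_{4}$ is a lattice isomorphism, so the dual pairing becomes the standard dot product on $\mathbb{Z}^{4}$, which is the convention the paper itself uses in Appendix \ref{sec:AppendixB}.
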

\begin{proof}
Since the vertex is chosen in common for $C_{NE}$ and $C_{0}$, the
claimed inclusion is easy to verify.
\end{proof}
In Appendix A, we have listed the primitive generators of the dual
cone $C_{0}^{\vee}$, which we denote by $\mu_{1},\cdots,\mu_{6}$
in order. Similarly we write the primitive generators of the dual
cone $C_{NE}^{\vee}$ by $\rho_{1},...,\rho_{5}.$ Note that, by Lemma
\ref{lem:C_NE-C_0}, we have the reversed inclusion \textit{as a set}
for the dual cones, i.e.,
\[
\mathrm{supp}\,C_{0}^{\vee}\supset\mathrm{supp\,}C_{NE}^{\vee}
\]
holds for the supports, in particular, the rays generated by $\rho_{1},\cdots,\rho_{5}$
are contained in $C_{0}^{\vee}$. Recall that the dual cone $C_{NE}^{\vee}$
has two possible subdivisions into smooth simplicial cones as described
in Lemma \ref{lem:C_NE-Resolutions} (2). In the following lemma,
we consider subdivisions of the dual cone $C_{0}^{\vee}$ using all
rays generated by $\mu_{1},\cdots,\mu_{6},\rho_{1},\cdots,\rho_{5}$. 
\begin{lem}
\label{lem:C_0-Dual-decomp} Up to the subdivisions of $C_{NE}^{\vee}$
in Lemma \ref{lem:C_NE-Resolutions} (2), there is a unique subdivision
of $C_{0}^{\vee}$ into smooth simplicial cones which contains the
dual cone $C_{NE}^{\vee}$ as a simplicial subset. \end{lem}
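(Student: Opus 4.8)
The plan is to make the inclusion $C_{NE}^\vee\subset C_0^\vee$ completely explicit and to read both cones off as cones over familiar $3$-polytopes. First I would take the generators $\mu_1,\dots,\mu_6$ of $C_0^\vee$ from Appendix~\ref{sec:Appendix-SecA} and check, under $L\equiv\mathbb{Z}^4$, that they are the vertices of a triangular prism: two facets of $C_0^\vee$ are simplicial (cones over the two triangular faces) and the other three are cones over quadrilaterals. The decisive observation is that each quadrilateral facet carries exactly one of $\rho_1,\rho_2,\rho_3$ in its relative interior, lying on both of its diagonals (so $\rho_3=\mu_i+\mu_j=\mu_k+\mu_l$ for the four rays of that facet, and similarly for $\rho_1,\rho_2$), while $\rho_4,\rho_5$ lie in the interior of $C_0^\vee$. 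Thus $C_{NE}^\vee$ is the cone over a triangular bipyramid with equatorial triangle $\{\rho_1,\rho_2,\rho_3\}$ and apices $\rho_4,\rho_5$, inscribed in the prism and meeting $\partial C_0^\vee$ exactly along the three rays $\rho_1,\rho_2,\rho_3$. I would record here the few integral relations among the $\mu$'s and $\rho$'s (such as $\rho_4=\rho_1+\mu_i=\rho_2+\mu_j$ and $\rho_5=\rho_2+\mu_k=\rho_3+\mu_l$) that will make the subsequent cones unimodular.

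For existence I would first note that $\partial C_0^\vee$ is forced: every $\mu_i$ is extremal in $C_0^\vee$ and every $\rho_j$ extremal in $C_{NE}^\vee$, so all eleven rays occur in any admissible subdivision; and since $\rho_1,\rho_2,\rho_3$ sit at the centres of the quadrilateral facets, each such facet is triangulated as the fan of four simplicial cones around its centre, while the two triangular facets are left intact. I would then display the maximal cones: the two cones of $\sigma_1^{(1)},\sigma_2^{(1)}$ (resp. the three cones $\sigma_i^{(2)}$) of the chosen subdivision of $C_{NE}^\vee$ from Lemma~\ref{lem:C_NE-Resolutions}; three ``edge'' cones joining, across each vertical edge of the prism, the two quadrilateral centres adjacent to it; two ``cap'' fans with apices $\rho_4$ and $\rho_5$ over the triangular facets; and the six ``bridge'' cones obtained by coning each of the six facets of the bipyramid off to the unique $\mu_i$ lying beyond it. Each is checked smooth by a single $4\times4$ determinant, and completeness is confirmed by a cross-section volume count: weighting $\mathrm{Cone}\{r_1,\dots,r_4\}$ by $1/\prod_m\langle(1,1,1,1),r_m\rangle$ (the rays sit at heights $1,2,3$), the weights sum to the prism volume $3$, which together with containment in $C_0^\vee$ forces a genuine tiling.

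For uniqueness I would exploit that, once the subdivision of $C_{NE}^\vee$ is fixed, the complementary shell $\overline{C_0^\vee\setminus C_{NE}^\vee}$ must be triangulated with no new interior ray available: the only rays off $\partial C_0^\vee$ are $\rho_4,\rho_5$, and these lie on $\partial C_{NE}^\vee$, i.e. on the inner boundary of the shell. The induced triangulations of both boundary components of the shell are already forced, since the boundary of $C_0^\vee$ is as above and the six facets of $C_{NE}^\vee$ are the same whether the first or the second interior subdivision of Lemma~\ref{lem:C_NE-Resolutions} is chosen. One is thereby reduced to showing that a shell bounded by two fixed triangulated spheres and containing no interior vertex admits a unique smooth filling by the available rays, which I would prove by the forced-apex method: for each boundary simplex the fourth generator of the abutting cone is pinned down by demanding both unimodularity and membership in $C_0^\vee$, exactly one of the eleven rays survives, and the filling then propagates inward uniquely until $C_{NE}^\vee$ is reached.

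The step I expect to be the genuine obstacle is this last uniqueness statement. Smoothness alone is insufficient: the diagonal triangulations of the quadrilateral facets are themselves unimodular, and $C_{NE}^\vee$ already shows that a $2\leftrightarrow3$ bistellar flip can preserve unimodularity. The argument must therefore combine three constraints simultaneously — that $\rho_1,\rho_2,\rho_3$ are forced to appear (ruling out the facet diagonals), that no extra rays are permitted, and that no unimodular bistellar flip exists \emph{inside the shell}, as opposed to inside $C_{NE}^\vee$, where the flip is precisely the passage between $\{\sigma_i^{(1)}\}$ and $\{\sigma_i^{(2)}\}$ and accounts for the phrase ``up to the subdivisions of $C_{NE}^\vee$''. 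Excluding admissible flips in the shell is the crux, and I would settle it by the finite, explicit case analysis of candidate apices described above.
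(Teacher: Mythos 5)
Your geometric setup is correct and is a genuinely different route from the paper, whose entire proof is a computer enumeration: all subdivisions of $C_{0}^{\vee}$ on the eleven rays are generated with TOPCOM, $54$ are found, and the claimed property is read off from the list. Your identification of $C_{0}^{\vee}$ as the cone over a triangular prism checks out (the triangular facets are $\{\mu\text{'s dual to }(1,0,1,0)\text{ and }(0,1,0,1)\}$, the three quadrilateral facets lie on $x_{2}=0$, $x_{3}=0$, $x_{1}+x_{4}=0$), as do the relations $\rho_{1}=\mu_{i}+\mu_{j}=\mu_{k}+\mu_{l}$ placing $\rho_{1},\rho_{2},\rho_{3}$ at the centres of the quadrilateral facets and $\rho_{4},\rho_{5}$ in the interior, so the bipyramid picture of $C_{NE}^{\vee}$ is right.

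However, the existence step has a concrete gap: your list of maximal cones is incomplete. Outside $C_{NE}^{\vee}$ you list $3$ edge cones, $2$ cap cones and $6$ bridge cones, i.e.\ $11$ cones, whereas the correct smooth subdivision has $17$ cones outside $C_{NE}^{\vee}$ (cf.\ Remark \ref{rem:SecP-Resol-Loc}: $17$ smooth charts plus the singular chart, $18\times6=108$). Your own completeness test exposes this: with heights $1,1,2,2,2,3,3$ for $\mu_{1},\dots,\mu_{6},\rho_{1},\dots,\rho_{5}$ the weights of your cones sum to $\tfrac{1}{12}+\tfrac{3}{4}+\tfrac{2}{3}+\tfrac{1}{2}=2$, not to the prism volume $3$. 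The six missing cones are forced by the boundary: each quadrilateral facet is fanned into four triangles around its centre $\rho_{a}$, but only the two triangles abutting the \emph{vertical} prism edges are faces of your edge cones; the two abutting the \emph{horizontal} edges (edges of the triangular facets) are faces of none of your cones. The missing cones are the six $\mathrm{Cone}\{\mu_{i},\mu_{j},\rho_{a},\rho_{b}\}$ where $\mu_{i}\mu_{j}$ is a horizontal edge, $\rho_{a}$ is the centre of the adjacent quadrilateral facet and $\rho_{b}\in\{\rho_{4},\rho_{5}\}$ is the cap apex over the adjacent triangular facet (e.g.\ $\mathrm{Cone}\{\mu_{2},\mu_{5},\rho_{1},\rho_{4}\}$); each is unimodular of weight $\tfrac16$, restoring the count to $17$ and the volume to $3$. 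Separately, your uniqueness argument is still only a plan: the ``forced-apex'' propagation through the shell is plausible but is exactly the finite case analysis you defer, and as you note yourself it must rule out unimodular bistellar flips supported in the shell, not merely appeal to smoothness. As it stands the proposal neither exhibits a complete subdivision nor excludes others, so it does not yet prove the lemma, though the strategy is sound and completable.
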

\begin{proof}
By explicit construction of all possible subdivisions, via a C++ code
TOPCOM \cite{TOPCOM}, we find 54 subdivisions. We verify the claimed
property from them.\end{proof}
\begin{lem}
\label{lem:partial-Res-M33loc}By the unique subdivision of $C_{0}^{\vee}$
in Lemma \ref{lem:C_0-Dual-decomp} which contains $C_{NE}^{\vee}$
as the simplicial subset, we have a partial resolution of the singularity
$\mathcal{M}_{3,3}^{Loc}=\mathrm{Spec}\,\mathbb{C}[C_{0}\cap L]$.\end{lem}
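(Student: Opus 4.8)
The plan is to read the statement entirely through the toric dictionary, so that it reduces to the combinatorial fact already secured in Lemma~\ref{lem:C_0-Dual-decomp}. First I would fix the dictionary. Since $\mathcal{M}_{3,3}^{Loc}=\mathrm{Spec}\,\mathbb{C}[C_{0}\cap L]$, the defining semigroup $C_{0}\cap L$ sits inside the lattice $L$; so $L$ plays the role of the character lattice, and $\mathcal{M}_{3,3}^{Loc}=U_{\sigma}$ is the affine toric variety attached to the \emph{single} cone $\sigma:=C_{0}^{\vee}$ in the dual lattice. Resolving $\mathcal{M}_{3,3}^{Loc}$ is therefore literally the same problem as subdividing $C_{0}^{\vee}$.

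Next I would observe that the subdivision $\Sigma$ furnished by Lemma~\ref{lem:C_0-Dual-decomp} is a fan whose support is exactly $C_{0}^{\vee}$: it is built only from the rays through $\mu_{1},\cdots,\mu_{6},\rho_{1},\cdots,\rho_{5}$, each of which lies in $C_{0}^{\vee}$, and by construction it tiles $C_{0}^{\vee}$ without enlarging the support. Because $\Sigma$ refines the fan consisting of $C_{0}^{\vee}$ together with its faces, and has the \emph{same} support, the standard correspondence between fan refinements and toric morphisms yields a toric map $\pi:X_{\Sigma}\to\mathcal{M}_{3,3}^{Loc}$ that is proper (the support is preserved) and birational (an isomorphism over the open torus). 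This proper birational $\pi$ is the asserted resolution; and since every maximal cone of $\Sigma$ is smooth simplicial, $X_{\Sigma}$ is in fact smooth, so $\pi$ resolves the singular point of $\mathcal{M}_{3,3}^{Loc}$.

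The relation to $\mathcal{M}_{SecP}$ then comes from the inclusion $C_{0}\subset C_{NE}$ of Lemma~\ref{lem:C_NE-C_0}, i.e.\ $C_{NE}^{\vee}\subset C_{0}^{\vee}$. By Lemma~\ref{lem:C_0-Dual-decomp} the chosen $\Sigma$ contains, as a subcomplex, the simplicial subdivision of $C_{NE}^{\vee}$ provided by Lemma~\ref{lem:C_NE-Resolutions}; hence the cones of $\Sigma$ lying in $C_{NE}^{\vee}$ form a subfan, whose open toric subvariety is exactly the resolution of the singular chart $\mathrm{Spec}\,\mathbb{C}[C_{NE}\cap L]=\mathcal{M}_{SecP}^{Loc}$ identified in Proposition~\ref{prop:SecP}. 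In this way $\Sigma$ refines $C_{0}^{\vee}$ compatibly with the secondary fan defining $\mathcal{M}_{SecP}$, so the subdivision simultaneously resolves $\mathcal{M}_{3,3}^{Loc}$ and exhibits $\mathcal{M}_{SecP}$ as the intermediate partial-resolution stage over the vertex $T_{1}$.

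The genuinely hard input—that a smooth subdivision of $C_{0}^{\vee}$ compatibly containing $C_{NE}^{\vee}$ exists at all, with the listed cones meeting along common faces and exhausting $C_{0}^{\vee}$—is already supplied by Lemma~\ref{lem:C_0-Dual-decomp}, proved there by the explicit TOPCOM enumeration of the $54$ subdivisions. Granting that, the present lemma carries no further obstacle: it is the routine translation of ``a fan refinement with equal support'' into ``a proper birational toric morphism,'' with smoothness of the maximal cones upgrading this to a resolution. Thus I expect the main difficulty to lie entirely upstream, and the work here to be purely dictionary.
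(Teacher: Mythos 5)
Your argument is correct and is essentially the paper's own (one-line) proof spelled out: the paper simply observes that the subdivision consists of smooth cones ``up to subdivisions of $C_{NE}^{\vee}$,'' and your translation via the standard dictionary --- fan refinement with equal support gives a proper birational toric morphism, smooth maximal cones give smooth charts --- is exactly what is meant by ``clear.'' The one point of emphasis to adjust is that the lemma's \emph{partial} resolution is the toric variety of the coarser fan in which $C_{NE}^{\vee}$ is kept as a single non-simplicial cone (17 smooth charts plus the singular chart $\mathrm{Spec}\,\mathbb{C}[C_{NE}\cap L]$), which you only reach in your third paragraph after first building the full smooth resolution $X_{\Sigma}$.
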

\begin{proof}
The claim is clear, since $C_{0}^{\vee}$ consists of smooth cones
up to subdivisions of $C_{NE}^{\vee}$.\end{proof}
\begin{prop}
\label{prop:Toric-Res-MsecP}The partial resolutions at each singular
points gives globally a partial resolution $\mathcal{M}_{SecP}\to\mathcal{M}_{3,3}.$ \end{prop}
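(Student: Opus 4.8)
The plan is to produce the map not by hand-gluing the six local resolutions of Lemma~\ref{lem:partial-Res-M33loc}, but by recognizing it as the toric morphism induced by a global refinement of fans, and then to reduce the verification of that refinement to the single local model by exploiting the symmetry of $\mathcal{M}_{3,3}$.

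First I would place both varieties on the same footing as toric varieties with common dense torus $\mathrm{Spec}\,\mathbb{C}[L]$ and fans in $L^{\vee}_{\mathbb{R}}$: the variety $\mathcal{M}_{SecP}=\mathbb{P}_{Sec(\mathcal{A})}$ carries the secondary fan $Sec\,\Sigma(\mathcal{A})$, while $\mathcal{M}_{3,3}=\mathbb{P}_{Conv(\mathcal{V})}$ carries the normal fan $\Sigma_{3,3}$ of $Conv(\mathcal{V})$. Both polytopes are four-dimensional, so both fans are complete and share the same support; consequently it suffices to show that $Sec\,\Sigma(\mathcal{A})$ refines $\Sigma_{3,3}$, since a refinement of complete fans over the same lattice induces a proper birational toric morphism between the associated toric varieties.

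To verify the refinement I would reduce to a single maximal cone of $\Sigma_{3,3}$. The polytope $Conv(\mathcal{V})$ has exactly the six vertices $\mathcal{V}$, so $\Sigma_{3,3}$ has six maximal cones, one per torus-fixed point; these are the six coordinate points of $\mathcal{M}_{3,3}$ by Proposition~\ref{prop:lines-in-M33}. The symmetry group of $\mathcal{M}_{3,3}=\{X_1X_2X_3=X_4X_5X_6\}$ (permutations of the three factors on each side together with the exchange of the two sides) acts transitively on these six points, hence on the six maximal cones, each of which is therefore equivalent to the cone $C_0^{\vee}$ dual to $C_0$ at the vertex $T_1$. Over $C_0^{\vee}$ the inclusion $C_0\subset C_{NE}$ of Lemma~\ref{lem:C_NE-C_0} gives $C_{NE}^{\vee}\subset C_0^{\vee}$, and Lemma~\ref{lem:C_0-Dual-decomp} identifies the part of $Sec\,\Sigma(\mathcal{A})$ lying in $C_0^{\vee}$ with the unique smooth subdivision containing $C_{NE}^{\vee}$, which by Lemma~\ref{lem:partial-Res-M33loc} is a partial resolution of the local model $\mathcal{M}_{3,3}^{Loc}$. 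Transporting this by the symmetry across all six cones shows that every maximal cone of $Sec\,\Sigma(\mathcal{A})$ sits inside a maximal cone of $\Sigma_{3,3}$, i.e. the refinement holds.

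The induced toric morphism $\mathcal{M}_{SecP}\to\mathcal{M}_{3,3}$ is then automatically proper and birational, and its restriction over each of the six singular points is exactly the local partial resolution of Lemma~\ref{lem:partial-Res-M33loc}; the six local pictures thus glue with no further work, and the source keeps only the milder $C_{NE}$-type singularities, so the morphism is a partial (not full) resolution. The main obstacle I anticipate is the global consistency of $Sec\,\Sigma(\mathcal{A})$ with the walls of $\Sigma_{3,3}$: one must be sure that no four-dimensional secondary cone straddles two copies of $C_0^{\vee}$ and that the subdivisions induced on the shared faces agree. The symmetry argument reduces this to the interior structure of a single $C_0^{\vee}$, but it still relies on the explicit secondary fan---already computed for Propositions~\ref{prop:SecP} and \ref{prop:MsecP-Resolution}---to guarantee that the $108$ maximal cones partition cleanly into the six groups refining the six copies of $C_0^{\vee}$.
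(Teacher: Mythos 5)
Your argument is correct and is essentially the paper's own proof recast in the standard language of fan refinements: the paper likewise obtains the morphism from the explicit computation of the secondary fan, checking at each of the six copies of $C_0^{\vee}$ that the induced subdivision is the unique one of Lemma \ref{lem:C_0-Dual-decomp} and confirming global consistency by the count $18\times 6=108$ recorded in Remark \ref{rem:SecP-Resol-Loc}. The one step you rightly flag as not following from general principles---that no maximal cone of $Sec\,\Sigma(\mathcal{A})$ straddles two copies of $C_0^{\vee}$---is precisely what that explicit count verifies, so your proposal neither adds to nor omits anything essential from the paper's argument.
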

\begin{proof}
Our proof is based on the explicit coordinate description of $\mathcal{M}_{SecP}$
calculating the secondary polytope. See also Remark \ref{rem:SecP-Resol-Loc}
below.\end{proof}
\begin{rem}
\label{rem:SecP-Resol-Loc} Toric resolutions of $\mathcal{M}_{SecP}$
have been described in Proposition \ref{prop:resolution-MsecP2}.
In the next section, we will obtain the same resolutions by blowing-up
along the singular locus of $\mathcal{M}_{3,3}$ (Proposition \ref{prop:resolution-MsecP2}).
In Fig.4, we depict one of the two possible resolutions of $\mathcal{M}_{3,3}^{Loc}$
schematically. As we see from the picture, the resolution of the singularity
is covered by 19 affine coordinate charts which correspond to 19 maximal
dimensional cones in the subdivision of $C_{0}^{\vee}$. If we remove
the subdivision of $C_{NE}^{\vee}\subset C_{0}^{\vee}$, then the
number reduces to 18, which is explained by 17 smooth maximal cones
and one singular cone $C_{NE}^{\vee}$ corresponding to $\mathrm{Spec}\,\mathbb{C}[C_{NE}\cap L]$.
One can also see the claim in Proposition \ref{prop:Toric-Res-MsecP}
in a simple counting $18\times6=108$ (see Proposition \ref{prop:SecP}).

~

~

~
\end{rem}

\section{\textbf{\textup{\label{sec:MoreOn-M33}More on the resolutions of
$\mathcal{M}_{3,3}$ }}}

In this section, we will describe the resolution without recourse
to the toric geometry of the secondary fan. This will allow us to
relate $\mathcal{M}_{SecP}$ to the geometry of the Baily-Borel-Satake
compactification $\mathcal{M}_{6}$. Recall that we have defined 
\[
\mathcal{M}_{3,3}^{Loc}=\mathrm{Spec}\,\mathbb{C}[C_{0}\cap L]\simeq\left\{ (x,y,z,u,v)\mid xyz=uv\right\} ,
\]
which which describes the local structure of the singularities in
$\mathcal{M}_{3,3}$. We shall write $\mathcal{X}=\mathcal{M}_{3,3}^{Loc}$
for short in what follows.

\subsection{\label{sub:blow-up-tildeX}Blowing-up $\mathcal{X}'\to\mathcal{X}$
along the singular locus}

From the defining equation $xyz=uv$, it is easy to see that the affine
hypersurface $\mathcal{X}\subset\mathbb{C}^{5}$ is singular along
the three coordinate lines of $x,y,z$ coordinates (cf. Subsection
\ref{sub:M33Loc}). Note that we can write the union of these lines
in $\mathbb{C}^{5}$ by 
\[
\Gamma:=\{u=v=xy=yz=zx=0\}.
\]
We will consider the blow-up $\pi_{1}\colon\mathcal{X}'\to\mathcal{X}$
along this locus $\Gamma$. Let us first introduce the blow up $\widetilde{\mathbb{C}_{\Gamma}^{5}}\subset\mathbb{C}^{5}\times\mathbb{P}^{4}$
starting with the relations 
\[
u:v:yz:zx:xy=U:V:W_{1}:W_{2}:W_{3},
\]
for $(u,v,x,y,z)\times[U,V,W_{1},W_{2},W_{3}]\in\mathbb{C}^{5}\times\mathbb{P}^{4}$.
The ideal $I_{\widetilde{\mathbb{C}_{\Gamma}^{5}}}$ of the blow-up
$\widetilde{\mathbb{C}_{\Gamma}^{5}}\subset\mathbb{C}^{5}\times\mathbb{P}^{4}$
is an irreducible component of the scheme defined by the above relations.
We denote by $\pi_{0}:\widetilde{\mathbb{C}_{\Gamma}^{5}}\rightarrow\mathbb{C}^{5}$
the natural projection. Then the blow-up $\mathcal{X}'$ is the strict
transform of $\mathcal{X}\subset\mathbb{C}^{5}$ by the birational
map $\pi_{0}$. 
\begin{prop}
\label{prop:Blow-Up-Xp}The blow-up $\mathcal{X}'$ is given in $\mathbb{C}^{5}\times\mathbb{P}^{4}$
by the following equations:
\begin{equation}
\begin{matrix}\begin{aligned}W_{1}W_{2} & =UVz, &  &  & W_{1}W_{3} & =UVy, &  &  & W_{2}W_{3} & =UVx,\\
W_{1}x & =Uv &  &  & W_{2}y & =Uv &  &  & W_{3}z & =Uv\\
W_{1}x & =Vu, &  &  & W_{2}y & =Vu, &  &  & W_{3}z & =Vu
\end{aligned}
\end{matrix}\label{eq:Ideal-Bl-Xp}
\end{equation}
and
\begin{equation}
\begin{matrix}\begin{aligned}W_{1}u & =yzU, &  &  & W_{2}u & =zxU, &  &  & W_{3}u & =xyU,\\
W_{1}v & =yzV, &  &  & W_{2}v & =zxV, &  &  & W_{3}v & =xyV.
\end{aligned}
\end{matrix}\label{eq:Ideal-Bl-Xp-C5}
\end{equation}
\end{prop}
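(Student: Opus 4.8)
The plan is to compute the strict transform chart by chart over the five standard affine opens of $\mathbb{P}^{4}$. In the chart $\{U\neq0\}$ (resp. $\{V\neq0\}$, $\{W_{i}\neq0\}$) the pullback of the center ideal $(u,v,yz,zx,xy)$ becomes \emph{principal}, generated by the ``pivot'' $u$ (resp. $v$, and $yz,zx,xy$); this pivot is the local equation of the exceptional divisor $\pi_{0}^{-1}(\Gamma)$. The strict transform $\mathcal{X}'$ is then obtained from the total transform $\pi_{0}^{*}(xyz-uv)$ by discarding the exceptional factor and reading off the resulting relations together with the incidence relations of $\widetilde{\mathbb{C}_{\Gamma}^{5}}$. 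Since $\mathcal{X}$ and $\Gamma$ are invariant under the cyclic permutation $(x,y,z)\mapsto(y,z,x)$, $(W_{1},W_{2},W_{3})\mapsto(W_{2},W_{3},W_{1})$ and under the involution $(u,v)\leftrightarrow(v,u)$, $(U,V)\leftrightarrow(V,U)$, it suffices to treat two representative charts: $\{U\neq0\}$, where the pivot is a coordinate, and $\{W_{1}\neq0\}$, where the pivot $yz$ is a product.

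In the chart $\{U\neq0\}$ I would set $U=1$ and use the incidence relations $v=Vu$, $yz=W_{1}u$, $zx=W_{2}u$, $xy=W_{3}u$. Substituting gives $xyz-uv=W_{1}xu-Vu^{2}=u\,(W_{1}x-Vu)$, so after removing the exceptional factor $u$ the strict transform is cut out by $W_{1}x=Vu$; combining this with $xyz=W_{1}xu=W_{2}yu=W_{3}zu$ yields $W_{1}x=W_{2}y=W_{3}z$, and hence the full set of relations $W_{i}x=Vu\,(=Uv)$ together with $W_{1}W_{2}=Vz$, $W_{1}W_{3}=Vy$, $W_{2}W_{3}=Vx$. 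These are exactly the equations (\ref{eq:Ideal-Bl-Xp}) and (\ref{eq:Ideal-Bl-Xp-C5}) specialized to $U=1$. In the chart $\{W_{1}\neq0\}$ I would set $W_{1}=1$, so the pivot is $yz$ and $u=Uyz$, $v=Vyz$, $zx=W_{2}yz$, $xy=W_{3}yz$; then $xyz-uv=yz\,(x-UVyz)$, giving the strict transform equation $x=UVyz$, which is precisely $W_{1}x=Uv$ of (\ref{eq:Ideal-Bl-Xp}) after substituting $v=Vyz$, with the remaining equations following as before.

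Matching these local computations against the global list shows that $\mathcal{X}'$ is contained in the subscheme $Z$ cut out by (\ref{eq:Ideal-Bl-Xp}) and (\ref{eq:Ideal-Bl-Xp-C5}); conversely, each listed equation is readily verified on the graph of the rational map over $\mathcal{X}\setminus\Gamma$ (e.g. $W_{1}x=Uv$ becomes $x\cdot yz=u\cdot v$, which is the defining equation $xyz=uv$), so that $Z\supseteq\overline{\pi_{0}^{-1}(\mathcal{X}\setminus\Gamma)}=\mathcal{X}'$ as well. To conclude $Z=\mathcal{X}'$ it then remains to check that $Z$ carries no extra component supported on the exceptional divisor: in each chart I would intersect the listed equations with $\{\text{pivot}=0\}$ and verify, by a direct dimension count, that this locus has dimension $\le 3$, hence cannot be a separate four-dimensional component and must lie in the closure of the part where the pivot is nonzero.

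I expect the main obstacle to be the product-pivot charts $\{W_{i}\neq0\}$. There the raw $2\times2$ minors of the incidence matrix are reducible — for instance $zx-W_{2}yz=z(x-W_{2}y)$ and $xy-W_{3}yz=y(x-W_{3}z)$ split off the coordinate hyperplanes $\{z=0\}$ and $\{y=0\}$ — reflecting the fact that the center is the non-complete-intersection union of three concurrent lines, so that the minor scheme (and even $\widetilde{\mathbb{C}_{\Gamma}^{5}}$) acquires spurious components over $\Gamma$. The key point to make rigorous is that the stronger equations in (\ref{eq:Ideal-Bl-Xp}), such as $W_{1}x=Uv$ and $W_{1}W_{2}=UVz$, single out precisely the branch $x=W_{2}y=W_{3}z$ belonging to the strict transform and eliminate the exceptional junk; verifying this on each product-pivot chart, and checking that the local descriptions glue correctly across the five charts, is where the real content of the proof lies.
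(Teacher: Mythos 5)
The paper's own proof is a one-line appeal to a computer computation: it forms the total transform ideal and runs a primary decomposition in Singular, reading off $\mathcal{X}'$ as the relevant component. Your proposal replaces this with a by-hand, chart-by-chart analysis over the five affine opens of $\mathbb{P}^{4}$, and this is a genuinely different (and more transparent) route. Your local computations are correct: in $\{U\neq0\}$ the total transform factors as $u(W_{1}x-Vu)$ and in $\{W_{1}\neq0\}$ as $yz(x-UVyz)$, and the derived relations match (\ref{eq:Ideal-Bl-Xp}) and (\ref{eq:Ideal-Bl-Xp-C5}). You also correctly identify the real difficulty, namely that $\Gamma$ is not a complete intersection, so the minor scheme of the incidence matrix acquires spurious components over $\Gamma$ in the product-pivot charts. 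Notably, the explicit chart descriptions you propose are exactly the ones the paper itself carries out later, in the proof of Proposition \ref{prop:BlowUp-along-xyz}(3), but only \emph{after} assuming the present proposition; your approach would derive the proposition from them.

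The one step that does not close as written is the last one. Checking that $Z\cap\{\text{pivot}=0\}$ has dimension $\le3$ only rules out extra \emph{four-dimensional} components of $Z$ supported on the exceptional locus; it does not imply that this locus "must lie in the closure of the part where the pivot is nonzero," since $Z$ could a priori still have isolated lower-dimensional components, or embedded components, sitting inside the exceptional divisor. This matters because the proposition is used ideal-theoretically (the paper's conclusion is that the listed equations \emph{generate the ideal} of $\mathcal{X}'$, and Proposition \ref{prop:BlowUp-along-xyz} reads off the geometry of $\mathcal{X}'$ directly from these generators). The repair is available inside your own framework and costs little extra: in each chart $\{W_{i}\neq0\}$ the listed equations solve for $u,v,x,W_{j},W_{k}$ as polynomials in the remaining four free coordinates, so $Z$ is there the graph of a morphism from $\mathbb{C}^{4}$, hence smooth, irreducible and reduced; in the charts $\{U\neq0\}$ and $\{V\neq0\}$ the equations are precisely the $2\times2$ minors of the $2\times2\times2$ hypermatrix displayed in the paper, whose zero scheme is the affine cone over the Segre $(\mathbb{P}^{1})^{3}$ -- an integral fourfold. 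Since $Z$ is integral of dimension $4$ in every chart and contains the graph over $\mathcal{X}\setminus\Gamma$ as a dense open subset, it coincides with its closure $\mathcal{X}'$ as a scheme, which is the full strength of the proposition. With that substitution for the dimension count, your argument is complete.
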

\begin{proof}
The ideal $I_{\widetilde{\mathbb{C}_{\Gamma}^{4}}}$ and the equation
$xyz=uv$ define the ideal $I_{T}$ of the total transform of $\mathcal{X}$.
Calculating the primary decomposition of $I_{T}$ by Singular \cite{DGPS-Singular},
we see that the claimed equations generate the ideal of $\mathcal{X}'$. 
\end{proof}
\begin{figure}[h] 
\resizebox{9cm}{!}{
\includegraphics[trim=0cm 0.5cm 0cm 0.5cm,
clip=true]{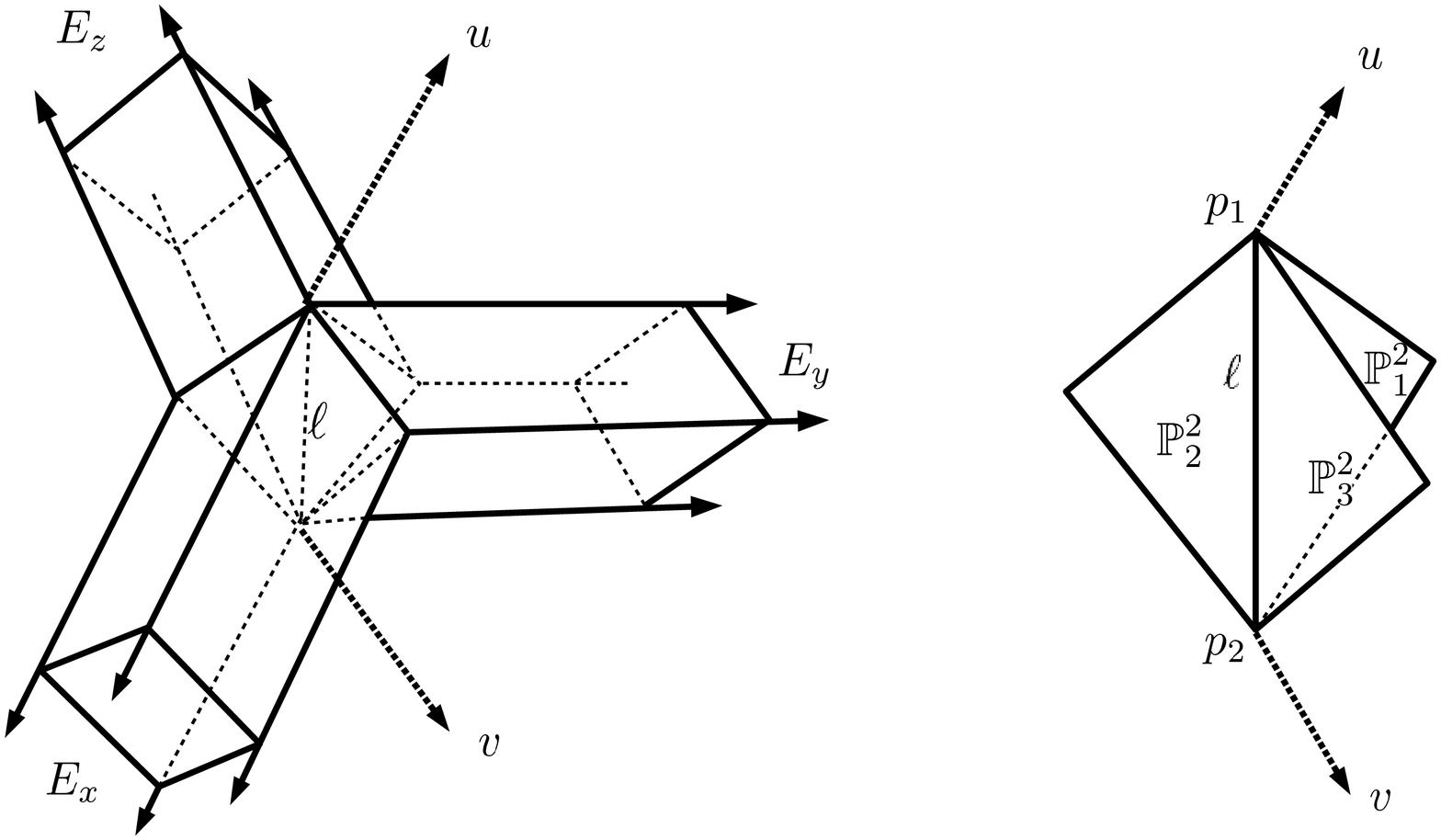}}
\caption{{\bf Fig.3} Exceptional divisors $E_x,E_y$ and $E_z$ in the blow-up $\mathcal{X}'$. Their junction locus is scaled up in the right figure. }
\end{figure}
\begin{prop}
\label{prop:BlowUp-along-xyz}The blow-up $\pi_{1}:\mathcal{X}'\to\mathcal{X}$
has the following properties:\end{prop}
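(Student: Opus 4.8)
The plan is to establish every stated property by a direct chart-by-chart analysis of the explicit equations (\ref{eq:Ideal-Bl-Xp}) and (\ref{eq:Ideal-Bl-Xp-C5}) cutting out $\mathcal{X}'\subset\mathbb{C}^{5}\times\mathbb{P}^{4}$, using the standard affine cover of $\mathbb{P}^{4}$ by the five charts $\{U\neq0\}$, $\{V\neq0\}$ and $\{W_{i}\neq0\}\ (i=1,2,3)$. First I would record the soft input: since $\mathcal{X}'$ is by definition the strict transform of $\mathcal{X}$ under the blow-up of the \emph{reduced} center $\Gamma$, and $\Gamma$ is exactly the singular locus of $\mathcal{X}$ (Proposition \ref{prop:lines-in-M33}), the map $\pi_{1}$ is an isomorphism over $\mathcal{X}\setminus\Gamma$ by the universal property of blowing up. This reduces all remaining assertions to a local computation over the three axes and their common point, the origin.

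Next I would compute the local coordinate ring in each $\{W_{i}\neq0\}$ chart. Setting, say, $W_{3}=1$, the relations (\ref{eq:Ideal-Bl-Xp}) and (\ref{eq:Ideal-Bl-Xp-C5}) solve to express the remaining variables as monomials in four free coordinates $x,y,U,V$, namely $u=xyU$, $v=xyV$, $z=xyUV$, $W_{1}=UVy$ and $W_{2}=UVx$; one checks the leftover binomial relations (e.g. $W_{1}W_{2}=UVz$, $W_{1}x=Uv$) hold identically, so this chart is isomorphic to $\mathbb{A}^{4}$ and $\mathcal{X}'$ is smooth there. In this chart the exceptional locus is the normal-crossing divisor $\{xy=0\}$, whose branch $\{x=0\}$ (resp. $\{y=0\}$) contracts to the $y$-axis (resp. $x$-axis). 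Running symmetrically through the three $W_{i}$-charts exhibits the exceptional set over $\Gamma\setminus\{0\}$ as three divisors $E_{x},E_{y},E_{z}$, one over each coordinate axis, meeting pairwise. To describe each $E_{i}$ intrinsically I would note that transverse to a general point $(x_{0},0,0,0,0)$ of the $x$-axis the surface slice of $\mathcal{X}$ is the ordinary double point $\{yz=uv/x_{0}\}$; blowing up the axis resolves this conifold fiberwise, so each $E_{i}$ is a conic ($\mathbb{P}^{1}$-)bundle over the corresponding line.

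The main obstacle, and the content of the magnified right-hand part of Fig.~3, is the junction over the origin, where the three axes and the three divisors come together. Here I would examine the charts $\{U\neq0\}$ and $\{V\neq0\}$. Setting $U=1$ and eliminating $v=Vu$, the defining ideal is the \emph{binomial} ideal generated by $W_{i}x=Vu$, by $W_{1}u=yz,\ W_{2}u=zx,\ W_{3}u=xy$, and by $W_{1}W_{2}=Vz,\ W_{1}W_{3}=Vy,\ W_{2}W_{3}=Vx$; every generator is a difference of two quadratic monomials, so the Zariski tangent space at the origin is far larger than $4$, and this chart is genuinely singular. Thus $\mathcal{X}'$ is only a \emph{partial} resolution, its remaining singularities lying in the $U$- and $V$-charts over the origin. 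Since the ideal is binomial the singularity is toric, and I would identify its cone (by reading off the Laurent exponents $x,y,z,u,\,yz/u,\,zx/u,\,xy/u,\,xyz/u^{2}$ in the torus of the $U$-chart) and check that it admits exactly the two small subdivisions responsible for the four-dimensional flip of Proposition \ref{prop:MsecP-Resolution}. To make this rigorous and guard against miscounting in the primary decomposition, I would run the same computer-algebra verification (Singular, TOPCOM) already used in the section, confirming both the smoothness of the $W_{i}$-charts and the precise toric type of the leftover singularity; matching the resulting charts with the subdivision of $C_{0}^{\vee}$ in Lemma \ref{lem:C_0-Dual-decomp} then reconciles this blow-up description with the toric resolution of $\mathcal{M}_{SecP}$ (Proposition \ref{prop:Toric-Res-MsecP}).
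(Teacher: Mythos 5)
Your overall strategy---reading off each affine chart of $\mathcal{X}'\subset\mathbb{C}^{5}\times\mathbb{P}^{4}$ directly from the equations (\ref{eq:Ideal-Bl-Xp}) and (\ref{eq:Ideal-Bl-Xp-C5})---is exactly the paper's, and your computations in the charts $\{W_{i}\neq0\}$ (smooth, isomorphic to $\mathbb{A}^{4}$) and $\{U\neq0\}$, $\{V\neq0\}$ (a binomial, hence toric, singularity whose eight monomials $x,y,z,u,yz/u,zx/u,xy/u,xyz/u^{2}$ realize the affine cone over the Segre $(\mathbb{P}^{1})^{3}$, singular only at its vertex) are correct and yield assertions (1) and (3).

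However, your intrinsic description of the exceptional components is wrong and contradicts part (2) of the statement. Transverse to a general point of the $x$-axis the slice of $\mathcal{X}$ is the \emph{threefold} ODP $\{x_{0}yz=uv\}\subset\mathbb{C}^{4}_{y,z,u,v}$, not a surface, and blowing up its origin (which is what blowing up the reduced axis does fiberwise) produces the full projectivized tangent cone $\{YZ=UV\}\subset\mathbb{P}^{3}\simeq\mathbb{P}^{1}\times\mathbb{P}^{1}$ as exceptional fiber---not a $\mathbb{P}^{1}$, which would be the fiber of a \emph{small} resolution. A ``conic ($\mathbb{P}^{1}$-)bundle'' over the axis would be $2$-dimensional, whereas $E_{x}$ is a divisor in the $4$-fold $\mathcal{X}'$, hence $3$-dimensional; indeed your own $W_{3}$-chart computation, in which $E_{x}=\{y=0\}$ has fibers $\mathbb{C}^{2}_{U,V}$ over the axis, already shows the general fibers are surfaces, in agreement with the paper's $\pi_{1}^{-1}(p)\simeq\mathbb{P}^{1}\times\mathbb{P}^{1}$ obtained from $W_{1}=0$, $W_{2}W_{3}=UVx$. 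You also leave two pieces of the statement unaddressed: the fiber over the origin, which the paper gets by setting $x=y=z=u=v=0$ so that (\ref{eq:Ideal-Bl-Xp}) degenerates to $W_{1}W_{2}=W_{1}W_{3}=W_{2}W_{3}=0$, i.e.\ three planes $\mathbb{P}^{2}_{i}$ glued along $\ell=\{o\times[U,V,0,0,0]\}$; and assertion (4), that $E_{x},E_{y},E_{z}$ themselves are singular only at $p_{1},p_{2}$ with ODPs, which follows by restricting the Segre-cone equations in the $U$-chart to $y=z=u=v=0$ (leaving $W_{2}W_{3}=Vx$). Neither is difficult with your setup, but both must be carried out.
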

\begin{enumerate}
\item The $\pi_{1}$-exceptional divisor has three irreducible components;
one for each coordinate line of $x,y,z$ coordinates. We call the
irreducible components $E_{x}$, $E_{y}$, $E_{z}$, respectively. 
\item The components $E_{x}$, $E_{y}$, $E_{z}$ have fibrations over the
corresponding coordinate lines. The $\pi_{1}$-fiber over a point
$p\in\Gamma$ is $(\mathbb{P}^{1})^{2}$ if $p$ is not the origin
$o$, while over the origin it is the union of three copies $\mathbb{P}_{i}^{2}\,(i=1,2,3)$
of $\mP^{2}$ which are glued along one line $\ell:=\mathbb{P}^{1}$
(see Fig.3). Over the origin, the components $E_{x},E_{y},E_{z}$
glue together by the following relations: 
\[
E_{x}|_{\pi_{1}^{-1}(o)}=\mathbb{P}_{2}^{2}\cup\mathbb{P}_{3}^{2},\;\;E_{y}|_{\pi_{1}^{-1}(o)}=\mathbb{P}_{3}^{2}\cup\mathbb{P}_{1}^{2},\;\;E_{z}|_{\pi_{1}^{-1}(o)}=\mathbb{P}_{1}^{2}\cup\mathbb{P}_{2}^{2}.
\]

\item The blow-up $\mathcal{X}'$ is singular only at two isolated points,
say, $p_{1}$ and $p_{2}$ on $\ell$. The singularities at these
points are isomorphic to the affine cone over the Segre$(\mP^{1})^{3}$.
\item The components $E_{x},E_{y}$ and $E_{z}$ are singular only at $p_{1}$
and $p_{2}$ with ODPs. \end{enumerate}
\begin{proof}
The claimed properties follow from the equations in Proposition \ref{prop:Blow-Up-Xp}.
For (1) and (2), because of the obvious symmetry, we only need to
consider the case of $x$-axes. Set $y=z=u=v=0$ in (\ref{eq:Ideal-Bl-Xp})
assuming $x\not=0$. Then we obtain $W_{1}=0$ and $W_{2}W_{3}=UVx$,
from which we see $\pi_{1}^{-1}(p)\simeq\mathbb{P}^{1}\times\mathbb{P}^{1}$
$(p\not=o)$ as claimed. When $x=y=z=u=v=0$, the equations (\ref{eq:Ideal-Bl-Xp})
become $W_{1}W_{2}=W_{1}W_{3}=W_{2}W_{3}=0$, from which we obtain
\[
\begin{aligned}\pi_{1}^{-1}(o) & =\left\{ o\times[U,V,W_{1},0,0]\right\} \cup\left\{ o\times[U,V,0,W_{2},0]\right\} \cup\left\{ o\times[U,V,0,0,W_{3}]\right\} \\
 & =:\mathbb{P}_{1}^{2}\cup\mathbb{P}_{2}^{2}\cup\mathbb{P}_{3}^{2}.
\end{aligned}
\]
Also we see that $\mathbb{P}_{1}^{2}\cap\mathbb{P}_{2}^{2}\cap\mathbb{P}_{3}^{2}=\left\{ o\times[U,V,0,0,0]\right\} =:\ell$
as claimed. It is easy to see the claimed forms of $E_{x}\vert_{\pi_{1}^{-1}(o)},E_{y}\vert_{\pi_{1}^{-1}(o)}$
and $E_{x}\vert_{\pi_{1}^{-1}(o)}$. 

To show (3), we express $\mathcal{X}'$ in affine coordinates. By
obvious symmetry, we only have to consider $\mathcal{X}'\vert_{W_{1}\not=0}$
and $\mathcal{X}'\vert_{U\not=0}$. Let us first describe the restriction
$\mathcal{X}'\vert_{W_{1}\not=0}$ by setting $W_{1}=1$. Then we
obtain the relations 
\[
W_{2}=UVz,\;\;W_{3}=UVy,\;\;x=Vu
\]
from (\ref{prop:Blow-Up-Xp}) and also $u=Uyz,\;\;v=Vyz$ from (\ref{eq:Ideal-Bl-Xp-C5}).
From these relations, we see that $\mathcal{X}'\vert_{W_{1}\not=0}$
is isomorphic to $\mathbb{C}^{4}$ with the coordinates $y,z,U,V$.
By symmetry, similar results hold for other cases $W_{2}\not=0$ and
$W_{3}\not=0$. In particular, $\mathcal{X}'\vert_{W_{i}\not=0}$
are smooth for $i=1,2,3.$ 

Next, let us describe $\mathcal{X}'\vert_{U\not=0}$ by setting $U=1$.
From (\ref{prop:Blow-Up-Xp}), we obtain 
\begin{equation}
\begin{matrix}\begin{aligned}W_{1}W_{2} & =Vz, &  &  & W_{1}W_{3} & =Vy, &  &  & W_{2}W_{3} & =Vx,\\
W_{1}x & =Vu, &  &  & W_{2}y & =Vu, &  &  & W_{3}z & =Vu
\end{aligned}
\end{matrix}\label{eq:Segre1}
\end{equation}
in addition to $v=Vu$ which eliminates $v$. Also from (\ref{eq:Ideal-Bl-Xp-C5}),
we have 
\begin{equation}
\begin{aligned}W_{1}u=yz, &  & W_{2}u=zx, &  & W_{3}u=xy\end{aligned}
\label{eq:Segre2}
\end{equation}
and also $W_{1}v=yzV,W_{2}v=zxV,W_{3}v=xyV$, where the latter three
relations are consequences other relations. We note that the equations
(\ref{eq:Segre1}) and (\ref{eq:Segre2}) are determinants of $2\times2$
sub-matrices of the 2-hypermatrix given in the equation below. Moreover,
the relations (\ref{eq:Segre1}) and (\ref{eq:Segre2}) are solved
by $a_{ijk}$ written in terms of the homogeneous coordinates $([a_{0},a_{1}],[b_{0},b_{1}],[c_{0},c_{1}])\in(\mathbb{P}^{1})^{3}$;

\def\hyperDet{
\begin{xy}
(-30,0)*++{a_0b_1c_1}="aoii",
(-10,0)*++{a_1 b_1 c_1}="aiii",
(-40,-10)*++{a_0b_0c_1}="aooi",
(-20,-10)*++{a_1b_0c_1}="aioi",
(-30,-20)*++{a_0b_1c_0}="aoio",
(-10,-20)*++{a_1b_1c_0}="aiio",
(-40,-30)*++{a_0b_0c_0}="aooo",
(-20,-30)*++{a_1b_0c_0}="aioo",
(-30,-10)*+{}="pi",
(-20,-20)*+{}="pii",
(-57,-15)*{\big(a_{ijk}\big)\;\;=},
(0,-15)*{=},
(20,0)*++{V}="Boii",
(40,0)*++{W_2}="Biii",
(10,-10)*++{W_1}="Booi",
(30,-10)*++{z}="Bioi",
(20,-20)*++{W_3}="Boio",
(40,-20)*++{x}="Biio",
(10,-30)*++{y}="Booo",
(30,-30)*++{u}="Bioo",
(20,-10)*+{}="qi",
(30,-20)*+{}="qii",
\ar@{-} "aoii";"aiii"
\ar@{-} "aooi";"aoii"
\ar@{-} "aooi";"aioi"
\ar@{-} "aioi";"aiii"
\ar@{-} "aoio";"pii"
\ar@{-} "pii";"aiio"
\ar@{-} "aooo";"aoio"
\ar@{-} "aooo";"aioo"
\ar@{-} "aioo";"aiio"
\ar@{-} "aoio";"pi"
\ar@{-} "pi";"aoii"
\ar@{-} "aooo";"aooi"
\ar@{-} "aioo";"aioi"
\ar@{-} "aiio";"aiii"
\ar@{-} "Boii";"Biii"
\ar@{-} "Booi";"Boii"
\ar@{-} "Booi";"Bioi"
\ar@{-} "Bioi";"Biii"
\ar@{-} "Boio";"qii"
\ar@{-} "qii";"Biio"
\ar@{-} "Booo";"Boio"
\ar@{-} "Booo";"Bioo"
\ar@{-} "Bioo";"Biio"
\ar@{-} "Boio";"qi"
\ar@{-} "qi";"Boii"
\ar@{-} "Booo";"Booi"
\ar@{-} "Bioo";"Bioi"
\ar@{-} "Biio";"Biii"
\end{xy} }
\[
\begin{matrix}\hyperDet\end{matrix}
\]
Thus we see that the relations (\ref{eq:Segre1}) and (\ref{eq:Segre2})
define the affine cone of the Segre$(\mathbb{P}^{1})^{3}$ in $\mathbb{C}^{8}$
with the affine coordinates $x,y,z,u,V,W_{1},W_{2},W_{3}$, which
is singular at the vertex (the origin of $\mathbb{C}^{8}$). Note
that the vertex corresponds to the point $p_{1}:=o\times[1,0,0,0,0]\in\mathcal{X}'$
which is on the line $\ell=\left\{ o\times[U,V,0,0,0]\right\} $.
By symmetry, the other case $\mathcal{X}'\vert_{V\not=0}$ can be
described similarly with the vertex $p_{2}:=o\times[0,1,0,0,0]$ on
the line $\ell$. 

The claim (4) follows from the proof for (3). For example, we set
$y=z=u=v=0$ in the equations (\ref{eq:Segre1}) and (\ref{eq:Segre2}).
Then we can verify the claimed property for $E_{x}$. 
\end{proof}
Note that $p_{1}$ and $p_{2}$ are the only singular points of $\mathcal{X}'$.
Let $\pi_{2}:\widetilde{\mathcal{X}}\to\mathcal{X}'$ be the blow-up
at $p_{1}$ and $p_{2}$. We denote by $\widetilde{E}_{x}$, $\widetilde{E}_{y}$,
$\widetilde{E}_{z}$ the strict transforms of the $\pi_{1}$-exceptional
divisors $E_{x}$, $E_{y}$, $E_{z}$ respectively. 
\begin{prop}
\label{prop:BlowUp-at-p1p2}The blow-up $\pi_{2}:\widetilde{\mathcal{X}}\to\mathcal{X}'$
introduces exceptional divisors $D_{p_{1}},D_{p_{2}}$ which are isomorphic
to $(\mP^{1})^{3}$. The resulting composite of the blow-ups of $\mathcal{X}$
gives a resolution of singularities $\pi_{1}\circ\pi_{2}:\widetilde{\mathcal{X}}\to\mathcal{X}$.
Moreover, the union $\widetilde{E}_{x}\cup\widetilde{E}_{y}\cup\widetilde{E}_{z}\cup D_{p_{1}}\cup D_{p_{2}}$
is a simple normal crossing divisor.\end{prop}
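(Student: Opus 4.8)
The plan is to dispatch the first two assertions by the general geometry of cones and to reserve the real work for the normal crossing statement. By Proposition \ref{prop:BlowUp-along-xyz}(3) the two singular points of $\mathcal{X}'$ are, in the charts $\mathcal{X}'|_{U\neq0}$ and $\mathcal{X}'|_{V\neq0}$, the affine cone $C:=C(\mathrm{Segre}(\mP^1)^3)\subset\mC^8$ realized via the hypermatrix $(a_{ijk})$ in the proof of that proposition, with vertex $p_i$. I would invoke the standard fact that blowing up the vertex of the affine cone $C(V)$ over a projectively embedded $V\subset\mP^N$ yields the total space of $\mathcal{O}_V(-1)$, whose exceptional divisor is canonically $V$ itself. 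Applied to $V=\mathrm{Segre}(\mP^1)^3$, this gives $D_{p_i}\simeq(\mP^1)^3$ and shows the blow-up is smooth over $p_i$ (total space of a line bundle over a smooth base); since $\mathcal{X}'$ is already smooth away from $\{p_1,p_2\}$ and $\pi_2$ is an isomorphism there, $\widetilde{\mathcal{X}}$ is smooth everywhere and $\pi_1\circ\pi_2$ is a resolution. In the product charts of $(\mP^1)^3$ this total space is $\mathrm{Tot}\,\mathcal{O}(-1,-1,-1)$, covered by eight copies of $\mC^4$ with one fibre coordinate cutting out $D_{p_i}$ and three base coordinates.

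For the normal crossing claim I would work in an explicit finite affine cover of $\widetilde{\mathcal{X}}$ and check that at each point the components of $\widetilde{E}_x\cup\widetilde{E}_y\cup\widetilde{E}_z\cup D_{p_1}\cup D_{p_2}$ through it are \emph{distinct} coordinate hyperplanes in suitable local coordinates. The cover consists of the three charts $\mathcal{X}'|_{W_i\neq0}\simeq\mC^4$ (untouched by $\pi_2$, since $p_1,p_2$ lie in $\{U\neq0\},\{V\neq0\}$) together with the $8+8$ cone charts over $p_1,p_2$. In the first type the equations of Proposition \ref{prop:Blow-Up-Xp} show that exactly two of the $E$'s appear as coordinate hyperplanes: e.g. $\mathcal{X}'|_{W_1\neq0}\simeq\mC^4_{y,z,U,V}$ carries $E_y=\{z=0\}$, $E_z=\{y=0\}$, while $E_x\subset\{W_1=0\}$ and $D_{p_1},D_{p_2}$ are absent, so simple normal crossing is immediate there. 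It remains to identify each strict transform in the cone charts.

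The crux, and the step I expect to be the main obstacle, is precisely this identification. Using $E_x\subset\{W_1=0\}$, $E_y\subset\{W_2=0\}$, $E_z\subset\{W_3=0\}$ (read off from $E_x|_{\pi_1^{-1}(o)}=\mP_2^2\cup\mP_3^2$ and its cyclic analogues in Proposition \ref{prop:BlowUp-along-xyz}(2)) together with the dictionary $W_1=a_0b_0c_1,\ W_2=a_1b_1c_1,\ W_3=a_0b_1c_0$, I would pin down the exceptional divisors on $C$ as the single factor-hyperplanes $E_x=\{b_0=0\}$, $E_y=\{a_1=0\}$, $E_z=\{c_0=0\}$, cross-checking against the $W_i\neq0$ charts (for instance $E_z=\{y=0\}$ on $\mathcal{X}'|_{W_1\neq0}$ must transform into $\{c_0=0\}$). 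The subtlety is that each $\{W_i=0\}\cap C$ is reducible and only one of its factor-components is genuinely exceptional, the others being strict transforms of boundary divisors of $\mathcal{X}$ (e.g. $\{c_1=0\}$ maps onto the non-contracted $\{z=v=0\}\subset\mathcal{X}$) which do not belong to our collection. Granting this, in each of the eight charts over $p_i$ the members of our union are among the four coordinate hyperplanes $\{\text{fibre}=0\}=D_{p_i}$, $\{a_1=0\}$, $\{b_0=0\}$, $\{c_0=0\}$ and are pairwise distinct — which simultaneously resolves the ODPs of the $E_j$ on $\ell$ recorded in Proposition \ref{prop:BlowUp-along-xyz}(4) — so the union is simple normal crossing there. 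The $V\neq0$ charts over $p_2$ are handled identically by the symmetry $U\leftrightarrow V$. Finally, as $p_1\neq p_2$ the divisors $D_{p_1},D_{p_2}$ are disjoint, and the listed charts cover $\widetilde{\mathcal{X}}$ since $[U:V:W_1:W_2:W_3]$ never vanishes identically, completing the argument.
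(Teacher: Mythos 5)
Your proposal is correct and follows essentially the same route as the paper, which disposes of the first two claims via the local description of $p_{1},p_{2}$ as vertices of cones over the Segre $(\mP^{1})^{3}$ from Proposition \ref{prop:BlowUp-along-xyz} and settles the normal crossing claim by "explicit computations." You have simply carried out those computations in full: your identifications $\widetilde{E}_{x}=\{b_{0}=0\}$, $\widetilde{E}_{y}=\{a_{1}=0\}$, $\widetilde{E}_{z}=\{c_{0}=0\}$ in the hypermatrix coordinates are consistent with the $\pi_{1}$-fibre descriptions and with the $W_{i}\neq0$ charts, and they exhibit the five divisors as distinct coordinate hyperplanes in every chart of the cover.
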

\begin{proof}
The first two claims follow from Proposition \ref{prop:BlowUp-along-xyz}.
The last assertion also follows from the explicit computations. 
\end{proof}
\begin{figure}[h] 
\resizebox{8.5cm}{!}{
\includegraphics[trim=0cm 1.5cm 0cm 2.5cm,
clip=true]{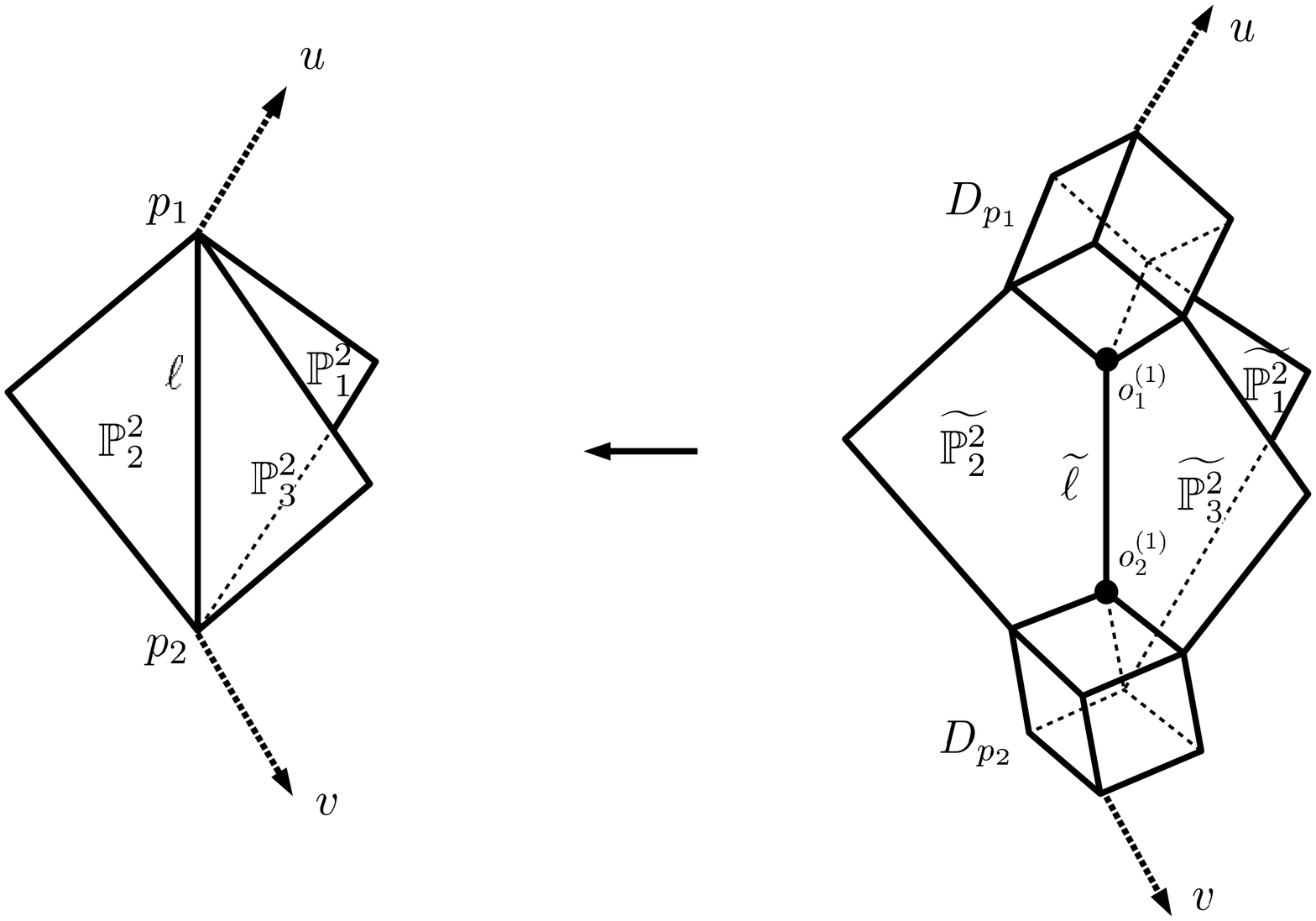}}
\caption{{\bf Fig.4} The blow-up of $\mathcal{X}'$ at $p_1$, $p_2$ in the junction. The intersection points $o^{(1)}_k=\widetilde{E_x}\cap\widetilde{E_y}\cap\widetilde{E_z}\cap D_{p_k}$ $(k=1,2)$ and $\widetilde{\ell}$ can be identified in the left of Fig.1. }
\end{figure}
\begin{rem}
\label{rem:def-Fi}As shown in Fig.4, the strict transforms of the
three $\mathbb{P}_{i}^{2}\,(i=1,2,3)$ under the blow-up $\pi_{2}:\widetilde{\mathcal{X}}\to\mathcal{X}'$
are $\mathbb{P}^{2}$ blown up at two points. 
\end{rem}
Making the blow-up $\widetilde{\mathcal{X}}\to\mathcal{X}=\mathcal{M}_{3,3}^{Loc}$
at each singular points of $\mathcal{M}_{3,3}$, we obtain the resolution
$\widetilde{\mathcal{M}}_{SecP}$ of the partial resolution $\mathcal{M}_{SecP}\to\mathcal{M}_{3,3}$
in Proposition \ref{prop:MsecP-Resolution}. Note that, in the resolution
$\widetilde{\mathcal{M}}_{SecP}$ thus obtained, we have the resolution
$\widetilde{\mathcal{X}}$ (the left in Fig.1) at all six singular
points.

\subsection{Flipping the line $\ell$ in $\widetilde{\mathcal{X}}$ to $\mathbb{P}^{2}$ }

Recall that we have introduced the line $\ell=\mathbb{P}_{1}^{2}\cap\mathbb{P}_{2}^{2}\cap\mathbb{P}_{3}^{2}$
in $\mathcal{X}$. Correspondingly, we have $\widetilde{\ell}=\widetilde{\mathbb{P}}_{1}^{2}\cap\widetilde{\mathbb{P}}_{2}^{2}\cap\widetilde{\mathbb{P}}_{3}^{2}$
on $\widetilde{\mathcal{X}}$. Here and in what follows we put $\widetilde{\,\,\empty\,\,}$
to indicate the the strict transform of a subvariety of $\mathcal{X}'$.
We can also write $\ell=E_{x}\cap E_{y}\cap E_{z}$ and $\widetilde{\ell}=\widetilde{E}_{x}\cap\widetilde{E}_{y}\cap\widetilde{E}_{z}$
by Proposition \ref{prop:BlowUp-along-xyz}. Let $N_{\widetilde{\ell}/\widetilde{\mathcal{X}}}$
be the normal bundle of $\widetilde{\ell}$ in $\widetilde{\mathcal{X}}$. 
\begin{lem}
We have $N_{\widetilde{\ell}/\widetilde{\mathcal{X}}}\simeq\mathcal{O}_{\mP^{1}}(-1)^{\oplus3}$. \end{lem}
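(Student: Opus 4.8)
The plan is to use that $\widetilde{\ell}$ is the transverse triple intersection of the three smooth exceptional divisors, and then to reduce, by symmetry, to computing a single intersection number. By Proposition \ref{prop:BlowUp-at-p1p2} the union $\widetilde{E}_{x}\cup\widetilde{E}_{y}\cup\widetilde{E}_{z}\cup D_{p_{1}}\cup D_{p_{2}}$ is simple normal crossing and $\widetilde{\ell}=\widetilde{E}_{x}\cap\widetilde{E}_{y}\cap\widetilde{E}_{z}$. Since $\widetilde{\mathcal{X}}$ is smooth of dimension $4$ and the three divisors meet transversally along the smooth curve $\widetilde{\ell}$, we may locally write $\widetilde{E}_{w}=\{t_{w}=0\}$ for coordinates $t_{x},t_{y},t_{z}$ transverse to $\widetilde{\ell}$; hence the conormal bundle splits and $N_{\widetilde{\ell}/\widetilde{\mathcal{X}}}\simeq\bigoplus_{w\in\{x,y,z\}}\sO_{\widetilde{\mathcal{X}}}(\widetilde{E}_{w})|_{\widetilde{\ell}}$, a sum of three line bundles on $\widetilde{\ell}\simeq\mP^{1}$. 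The $S_{3}$-symmetry of $\mathcal{X}=\{xyz=uv\}$ cyclically permuting $x,y,z$ (and therefore $E_{x},E_{y},E_{z}$) fixes $\Gamma$, $\ell$ and $\widetilde{\ell}$, so the three summands are isomorphic, of a common degree $d$. It thus suffices to prove $\widetilde{E}_{x}\cdot\widetilde{\ell}=d=-1$.

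To compute this intersection number I would work in the two local models supplied by Proposition \ref{prop:BlowUp-along-xyz}. Near $p_{k}$ the variety $\mathcal{X}'$ is the affine cone over the Segre $(\mP^{1})^{3}$, so $\pi_{2}$ blows up its vertex and, locally, $\widetilde{\mathcal{X}}$ is the total space of $\sO_{(\mP^{1})^{3}}(-1,-1,-1)$ with zero section $D_{p_{k}}=(\mP^{1})^{3}$; the line $\ell$ is the ruling of the cone through the vertex, so its strict transform $\widetilde{\ell}$ is the fibre over the point $o^{(1)}_{k}=\widetilde{E}_{x}\cap\widetilde{E}_{y}\cap\widetilde{E}_{z}\cap D_{p_{k}}$, meeting $D_{p_{k}}$ transversally. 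In these charts $\widetilde{E}_{x}$ is (the strict transform of a subcone, hence) the pullback under the bundle projection of a coordinate divisor $S_{x}\subset(\mP^{1})^{3}$ with $o^{(1)}_{k}\in S_{x}$; consequently $\sO(\widetilde{E}_{x})|_{\widetilde{\ell}}$ is trivial near each $p_{k}$, and the whole degree $d$ is concentrated in the transition between the two trivializations over $\ell^{\circ}=\ell\setminus\{p_{1},p_{2}\}$. Using explicit local equations for $\widetilde{E}_{x}$ read off from the formulas of Proposition \ref{prop:Blow-Up-Xp}, I would patch the two trivializations across $\ell^{\circ}$ and compute the degree of the resulting transition line bundle, obtaining $d=-1$.

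The main obstacle is precisely this last step: $\mathcal{X}$ and $\mathcal{X}'$ are singular exactly at the points $p_{1},p_{2}$ of $\ell$ being blown up, so one cannot simply evaluate $E_{x}\cdot\ell$ downstairs and must instead glue the two cone-over-Segre charts by hand, carefully tracking the coordinate identifications of Proposition \ref{prop:BlowUp-along-xyz}. As an independent check, the result also follows torically: $\widetilde{\ell}$ is the orbit-closure $V(\tau)$ of the wall $\tau=\mathrm{Cone}\{\rho_{1},\rho_{2},\rho_{3}\}$ separating the maximal cones $\sigma_{1}^{(1)}$ and $\sigma_{2}^{(1)}$ of the left resolution in Lemma \ref{lem:C_NE-Resolutions}, and the associated wall relation $\rho_{4}+\rho_{5}=\rho_{1}+\rho_{2}+\rho_{3}$ has all three coefficients equal to $1$; the standard formula for the normal bundle of a torus-invariant curve then gives $N_{\widetilde{\ell}/\widetilde{\mathcal{X}}}\simeq\sO_{\mP^{1}}(-1)^{\oplus3}$ directly, confirming $d=-1$.
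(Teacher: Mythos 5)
Your reduction step coincides with the paper's: both use that $\widetilde{\ell}$ is the transverse intersection of the three smooth divisors $\widetilde{E}_{x},\widetilde{E}_{y},\widetilde{E}_{z}$ (Proposition \ref{prop:BlowUp-at-p1p2}) plus the $x,y,z$-symmetry to reduce everything to the single number $\widetilde{E}_{x}\cdot\widetilde{\ell}$. Where you diverge is in how that number is computed, and here your primary route and the paper's are both different from each other. The paper restricts twice to surfaces: $\widetilde{E}_{x}\cdot\widetilde{\ell}=(\widetilde{\mP}_{2}^{2}\cdot\widetilde{\ell})_{\widetilde{E}_{y}}=(\widetilde{\ell}^{2})_{\widetilde{\mP}_{3}^{2}}$, and then uses that $\widetilde{\mP}_{i}^{2}$ is $\mP^{2}$ blown up at the two points $p_{1},p_{2}$ lying on the line $\ell$, so $\widetilde{\ell}$ is a $(-1)$-curve there; this is short and avoids any patching of charts. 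Your primary route (trivializing $\mathcal{O}(\widetilde{E}_{x})|_{\widetilde{\ell}}$ near each Segre-cone point and computing the degree of the transition over $\ell\setminus\{p_{1},p_{2}\}$) is only sketched: the decisive gluing computation, which you yourself flag as the main obstacle, is not carried out, so on its own this would be a gap. However, your ``independent check'' is in fact a complete and correct proof: the wall $\tau=\mathrm{Cone}\{\rho_{1},\rho_{2},\rho_{3}\}$ between $\sigma_{1}^{(1)}$ and $\sigma_{2}^{(1)}$ satisfies $\rho_{4}+\rho_{5}=\rho_{1}+\rho_{2}+\rho_{3}$ (one checks $(1,1,1,0)+(0,1,1,1)=(1,2,2,1)$ against the sum of $\rho_{1},\rho_{2},\rho_{3}$), and the standard wall-relation formula for torus-invariant curves gives $D_{\rho_{i}}\cdot V(\tau)=-1$ for $i=1,2,3$, hence $N_{\widetilde{\ell}/\widetilde{\mathcal{X}}}\simeq\mathcal{O}_{\mP^{1}}(-1)^{\oplus3}$; the identification of $\widetilde{\ell},\widetilde{E}_{x},\widetilde{E}_{y},\widetilde{E}_{z}$ with $V(\tau)$ and the toric divisors $D_{\rho_{1}},D_{\rho_{2}},D_{\rho_{3}}$ is supplied by Proposition \ref{prop:resolution-MsecP2}. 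I would promote that toric argument to the main proof and drop the transition-function sketch: it is the four-dimensional analogue of reading off $\mathcal{O}(-1)^{\oplus2}$ from the Atiyah-flop relation, it makes the subsequent flip of Proposition \ref{prop:flip-ell-P2} transparent, whereas the paper's surface-theoretic computation has the advantage of not depending on the toric identification at all.
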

\begin{proof}
By Proposition \ref{prop:BlowUp-at-p1p2}, $\widetilde{E}_{x}$, $\widetilde{E}_{y}$
and $\widetilde{E}_{z}$ are smooth on $\widetilde{\mathcal{X}}$.
Since $\widetilde{\ell}=\widetilde{E}_{x}\cap\widetilde{E}_{y}\cap\widetilde{E}_{z}$,
we have only to show that $\widetilde{E}_{x}\cdot\widetilde{\ell}=\widetilde{E}_{y}\cdot\widetilde{\ell}=\widetilde{E}_{z}\cdot\widetilde{\ell}=-1$.
By symmetry, it suffices to show that $\widetilde{E}_{x}\cdot\widetilde{\ell}=-1$.
Since $\widetilde{E}_{x}\cap\widetilde{E}_{y}=\widetilde{\mP}_{2}^{2}$
and $\widetilde{\mP}_{2}^{2}\cap\widetilde{\mP}_{3}^{2}=\widetilde{\ell}$,
we have $\widetilde{E}_{x}\cdot\widetilde{\ell}=(\widetilde{\mP}_{2}^{2}\cdot\widetilde{\ell})_{\widetilde{E}_{y}}=(\widetilde{\ell}^{2})_{\widetilde{\mP}_{3}^{2}}$.
Note that $\widetilde{\mP}_{i}^{2}\,(i=1,2,3)$ is a $\mP^{2}$ blown-up
at two points and $\widetilde{\ell}$ is a $(-1)$-curve on $\widetilde{\mP}_{i}^{2}$.
Therefore $(\widetilde{\ell}^{2})_{\widetilde{\mP}_{3}^{2}}=-1$ as
claimed. \end{proof}
\begin{prop}
\label{prop:flip-ell-P2}There is a flip which transforms the line
$\widetilde{\ell}$ to $\mathbb{P}^{2}.$\end{prop}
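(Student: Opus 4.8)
The plan is to realize this as the standard four-dimensional flip associated to the configuration of the preceding lemma, carried out concretely by a blow-up followed by a blow-down along the other ruling. By Proposition \ref{prop:BlowUp-at-p1p2} the fourfold $\widetilde{\mathcal{X}}$ is smooth, and by the preceding lemma $\widetilde{\ell}\simeq\mathbb{P}^1$ has normal bundle $N_{\widetilde{\ell}/\widetilde{\mathcal{X}}}\simeq\mathcal{O}_{\mathbb{P}^1}(-1)^{\oplus3}$, which is exactly the local configuration of the standard flip. First I would blow up $\widetilde{\mathcal{X}}$ along $\widetilde{\ell}$, producing $q\colon\widehat{\mathcal{X}}\to\widetilde{\mathcal{X}}$ with exceptional divisor $E=\mathbb{P}(N_{\widetilde{\ell}/\widetilde{\mathcal{X}}})$. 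Because the bundle is the twist $\mathcal{O}_{\mathbb{P}^1}(-1)\otimes\mathbb{C}^3$ of the trivial bundle, its projectivization is $E\simeq\mathbb{P}^1\times\mathbb{P}^2$, the two factors being $\widetilde{\ell}$ and the $\mathbb{P}^2$ of one-dimensional subspaces of $\mathbb{C}^3$.

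The key computation is the normal bundle of $E$ in $\widehat{\mathcal{X}}$. Using the standard identification $\mathcal{O}_E(E)\simeq\mathcal{O}_{\mathbb{P}(N)}(-1)$ together with $N\simeq\mathcal{O}_{\mathbb{P}^1}(-1)\otimes\mathbb{C}^3$, I would check that
\[
N_{E/\widehat{\mathcal{X}}}\simeq \mathrm{pr}_1^*\mathcal{O}_{\mathbb{P}^1}(-1)\otimes\mathrm{pr}_2^*\mathcal{O}_{\mathbb{P}^2}(-1)
\]
on $E=\mathbb{P}^1\times\mathbb{P}^2$. The point is that this restricts to $\mathcal{O}_{\mathbb{P}^1}(-1)$ on each fiber $\mathbb{P}^1\times\{\mathrm{pt}\}$ of the second projection $\mathrm{pr}_2\colon E\to\mathbb{P}^2$, precisely the negativity needed to contract $E$ along the other ruling. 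By the Fujiki--Nakano contraction criterion there is then a blow-down $q^+\colon\widehat{\mathcal{X}}\to\widetilde{\mathcal{X}}^+$ onto a smooth fourfold which contracts $E$ to a copy of $\mathbb{P}^2\subset\widetilde{\mathcal{X}}^+$ (with normal bundle $\mathcal{O}_{\mathbb{P}^2}(-1)^{\oplus2}$) and is an isomorphism elsewhere. Since $q$ and $q^+$ are isomorphisms away from $\widetilde{\ell}$ and from $\mathbb{P}^2$ respectively, the induced birational map $\widetilde{\mathcal{X}}\dashrightarrow\widetilde{\mathcal{X}}^+$ is small and replaces $\widetilde{\ell}\simeq\mathbb{P}^1$ by $\mathbb{P}^2$, which is the asserted flip. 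I would also note that the common small contraction of $\widetilde{\ell}$ and of this $\mathbb{P}^2$ is the affine cone over the Segre embedding of $\mathbb{P}^1\times\mathbb{P}^2$, i.e.\ the determinantal singularity $\mathcal{M}_{SecP}^{Loc}$ of Proposition \ref{prop:SecP}, so that the two sides of the flip are exactly the two small resolutions parametrized by the subdivisions of $C_{NE}^{\vee}$ in Lemma \ref{lem:C_NE-Resolutions} and depicted in Fig.~1.

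The step I expect to be the main obstacle is confirming that this transformation is genuinely a flip rather than merely an isomorphism in codimension one: namely, establishing the common small contraction $\widetilde{\mathcal{X}}\to\overline{\mathcal{X}}$ onto the cone $\mathcal{M}_{SecP}^{Loc}$ as an algebraic (not merely analytic) morphism, and checking the relative (anti-)ampleness that distinguishes the two resolutions. The normal-bundle computation above supplies the local negativity that drives the Fujiki--Nakano contraction, but verifying that the contraction of $\widetilde{\ell}$ descends globally and that the resulting small modification satisfies the defining positivity of a flip is where minimal model program input is required.
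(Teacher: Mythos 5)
Your proposal is correct and follows essentially the same route as the paper: blow up $\widetilde{\mathcal{X}}$ along $\widetilde{\ell}$, identify the exceptional divisor as $\widetilde{\ell}\times\mathbb{P}^{2}$ using $N_{\widetilde{\ell}/\widetilde{\mathcal{X}}}\simeq\mathcal{O}_{\mathbb{P}^{1}}(-1)^{\oplus3}$, and contract it along the other ruling to obtain a $\mathbb{P}^{2}$; the paper states exactly this (analytically, in one line) and, like you, defers the genuinely algebraic and global construction to the MMP argument in Theorem \ref{thm:Resolutions} and Claim \ref{cla:flip}. Your added normal-bundle computation $N_{E/\widehat{\mathcal{X}}}\simeq\mathcal{O}(-1,-1)$ and the identification of the common contraction with the cone over the Segre $\mathbb{P}^{1}\times\mathbb{P}^{2}$ are correct elaborations consistent with Proposition \ref{prop:SecP} and Fig.~1.
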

\begin{proof}
Here we only consider analytically for simplicity. See the proof of
Theorem \ref{thm:Resolutions} for an algebraic construction of the
flip. Since $N_{\widetilde{\ell}/\widetilde{\mathcal{X}}}\simeq\mathcal{O}_{\mP^{1}}(-1)^{\oplus3}$,
by blowing-up along the line $\ell$, we obtain $\ell\times\mathbb{P}^{2}$
as the exceptional divisor. Contracting this to $\mathbb{P}^{2}$,
we obtain the flip (cf. Fig.1). 
\end{proof}
We denote by $\widetilde{\mathcal{X}}^{+}\to\mathcal{X}$ the resulting
resolution after the flip of the resolution $\widetilde{\mathcal{X}}\to\mathcal{X}=\mathcal{M}_{3,3}^{Loc}$.
\begin{prop}
\label{prop:resolution-MsecP2}Making resolutions $\widetilde{\mathcal{X}}\to\mathcal{X}$
or $\widetilde{\mathcal{X}}^{+}\to\mathcal{X}$ locally at each of
six isomorphic singular points of $\mathcal{M}_{3,3}$, we obtain
the same resolution as the toric resolutions of $\mathcal{M}_{3,3}$
in Proposition \ref{prop:Toric-Res-MsecP} and Proposition \ref{prop:MsecP-Resolution}.\end{prop}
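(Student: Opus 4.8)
The plan is to observe that $\mathcal{X}=\mathcal{M}_{3,3}^{Loc}=\mathrm{Spec}\,\mathbb{C}[C_{0}\cap L]$ is an affine toric variety, and that every center appearing in the geometric construction---the singular locus $\Gamma$, the two points $p_{1},p_{2}$, and the line $\widetilde{\ell}$---is invariant under the torus of $\mathcal{X}$, since each is cut out by monomial (coordinate) equations. Consequently $\pi_{1}\circ\pi_{2}\colon\widetilde{\mathcal{X}}\to\mathcal{X}$ and its flip $\widetilde{\mathcal{X}}^{+}\to\mathcal{X}$ are toric morphisms, each determined by a fan refining the cone $C_{0}^{\vee}$. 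The proof then reduces to computing this refinement and comparing it, ray by ray and cone by cone, with the unique smooth subdivision of $C_{0}^{\vee}$ produced in Lemma \ref{lem:C_0-Dual-decomp} and refined by Lemma \ref{lem:C_NE-Resolutions}(2).

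First I would read off the rays of the five exceptional divisors from their divisorial valuations, using the explicit affine charts computed in the proofs of Propositions \ref{prop:BlowUp-along-xyz} and \ref{prop:BlowUp-at-p1p2}. I expect the three divisors $E_{x},E_{y},E_{z}$ introduced by blowing up $\Gamma$ to correspond to the rays $\rho_{1},\rho_{2},\rho_{3}$, and the two divisors $D_{p_{1}},D_{p_{2}}$ introduced by blowing up $p_{1},p_{2}$ to correspond to $\rho_{4},\rho_{5}$, so that the set of primitive ray generators of the fan of $\widetilde{\mathcal{X}}$ is exactly $\{\mu_{1},\dots,\mu_{6},\rho_{1},\dots,\rho_{5}\}$, matching the combinatorial data of Lemma \ref{lem:C_0-Dual-decomp}. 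The maximal cones are then read off from the smooth charts $\mathcal{X}'|_{W_{i}\neq0}\cong\mathbb{C}^{4}$ and from the charts of the two point blow-ups; I would verify that they assemble into the unique smooth simplicial subdivision of $C_{0}^{\vee}$ containing $C_{NE}^{\vee}$, now refined by the left-type decomposition $\{\sigma_{1}^{(1)},\sigma_{2}^{(1)}\}$. A clean consistency check is the identification in Fig.4: the two torus-fixed points $o_{k}^{(1)}=\widetilde{E}_{x}\cap\widetilde{E}_{y}\cap\widetilde{E}_{z}\cap D_{p_{k}}$ correspond to the maximal cones $\mathrm{Cone}\{\rho_{1},\rho_{2},\rho_{3},\rho_{6-k}\}=\sigma_{k}^{(1)}$.

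The flip is handled by matching orbit-closure dimensions. Since $\widetilde{\ell}=\widetilde{E}_{x}\cap\widetilde{E}_{y}\cap\widetilde{E}_{z}$ is the intersection of three divisors, it corresponds to the three-dimensional cone $\mathrm{Cone}\{\rho_{1},\rho_{2},\rho_{3}\}$, which is precisely the common wall of the left-type decomposition; the flip of Proposition \ref{prop:flip-ell-P2}, replacing $\widetilde{\ell}\cong\mathbb{P}^{1}$ by a $\mathbb{P}^{2}$, corresponds to deleting this wall and re-triangulating $C_{NE}^{\vee}$ by the right-type decomposition $\{\sigma_{1}^{(2)},\sigma_{2}^{(2)},\sigma_{3}^{(2)}\}$, whose common face $\mathrm{Cone}\{\rho_{4},\rho_{5}\}$ is two-dimensional and hence gives a two-dimensional orbit closure, the $\mathbb{P}^{2}$. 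This reproduces $\widetilde{\mathcal{X}}^{+}$ together with the four-dimensional flip of Proposition \ref{prop:MsecP-Resolution}. Finally, since all six singular points of $\mathcal{M}_{3,3}$ are isomorphic and are permuted by its symmetry (Proposition \ref{prop:lines-in-M33}), the local identifications glue---the gluing being unobstructed because the local fans agree and the charts overlap along the smooth locus---yielding the global statement that $\widetilde{\mathcal{X}}$ and $\widetilde{\mathcal{X}}^{+}$ recover $\widetilde{\mathcal{M}}_{SecP}$ and $\widetilde{\mathcal{M}}_{SecP}^{+}$.

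The main obstacle I anticipate is the cone-by-cone matching in the second step. The geometric construction proceeds by a blow-up along the \emph{reducible} center $\Gamma$ followed by two point blow-ups, and the intermediate variety $\mathcal{X}'$ carries singularities that are cones over the Segre embedding of $(\mathbb{P}^{1})^{3}$, whereas the toric partial resolution $\mathcal{M}_{SecP}$ carries along $C_{NE}$ a singularity that is a cone over the Segre embedding of $\mathbb{P}^{1}\times\mathbb{P}^{2}$ (Proposition \ref{prop:SecP}). Thus the two procedures do \emph{not} agree stage by stage, and the identification is valid only for the final fan; one must therefore verify directly that the composite subdivision coincides with the one-step toric subdivision, rather than assuming a step-by-step correspondence through the intermediate models. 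Everything else is a finite, if tedious, verification on the explicit charts already recorded in Section \ref{sec:MoreOn-M33}.
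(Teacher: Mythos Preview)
Your proposal is correct and follows essentially the same strategy as the paper, namely an explicit chart-by-chart verification; the paper's proof is terse (``by explicit calculations, we find that 108 affine charts are grouped into six isomorphic blocks of 18 charts \dots\ we verify that each block is isomorphic to $\widetilde{\mathcal{X}}$ or $\widetilde{\mathcal{X}}^{+}$''), whereas you supply the conceptual scaffolding---torus-invariance of $\Gamma$, $p_{1}$, $p_{2}$, $\widetilde{\ell}$ forcing the blow-ups to be toric, then matching rays via divisorial valuations---that explains \emph{why} the computation must succeed. Your observation that the intermediate models disagree (cone over Segre $(\mathbb{P}^{1})^{3}$ versus cone over $\mathbb{P}^{1}\times\mathbb{P}^{2}$) so that only the final fans, not the factorizations, coincide, is a genuine clarification not made explicit in the paper; the chart count $3+8+8=19$ maximal cones in your subdivision matches Remark \ref{rem:SecP-Resol-Loc}, which is a good sanity check.
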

\begin{proof}
We verify the claim explicitly by writing the resolutions of $\mathcal{M}_{SecP}$
in Proposition \ref{prop:MsecP-Resolution}. Here we only sketch our
calculations. As described in the proof of Proposition \ref{prop:MsecP-Resolution},
the partial resolution $\mathcal{M}_{SecP}$ of $\mathcal{M}_{3,3}$
is covered by 108 affine charts, among which six charts are singular.
The singular charts are isomorphic to $\mathrm{Spec}\,\mathbb{C}[C_{NE}\cap L]$
which has two resolutions shown in Fig.1. By explicit calculations,
we find that 108 affine charts are grouped into six isomorphic blocks
of 18 charts (one singular and 17 smooth charts). We verify that each
block is isomorphic to $\widetilde{\mathcal{X}}$ or $\widetilde{\mathcal{X}}^{+}$
after making a resolution of the singular chart. 
\end{proof}
The above proposition provides us a global picture of the parameter
space of the GKZ $\mathcal{A}$-hypergeometric system in Proposition
\ref{prop:GKZ-sys-def}. Our task in the next section is to make a
covering of the parameter space $E(3,6)$ by certain Zariski open
subsets of the parameter space of the GKZ $\mathcal{A}$-hypergeometric
system.
\begin{rem}
Instead of constructing the resolution $\widetilde{\mathcal{X}}\to\mathcal{X}$
starting with the blow-up $\mathcal{X}'$ along $\Gamma$, we can
also make a resolution by first blowing-up along $z$-coordinate line
and then blowing-up along $x$- and $y$-coordinate lines. Since the
(strict transforms of) $x$- and $y$-coordinate lines are separated
by the first blowing-up along $z$-coordinate line, and the singularities
along these lines are of $A_{1}$ type, we obtain a resolution $\widehat{\mathcal{X}}\to\mathcal{X}$
in this way. Note that the resolution $\widehat{\mathcal{X}}\to\mathcal{X}$
introduces only three exceptional divisors from the blowing-ups, and
hence this is not isomorphic to $\widetilde{\mathcal{X}}\to\mathcal{X}$
in Proposition \ref{prop:BlowUp-at-p1p2} nor $\widetilde{\mathcal{X}}^{+}\to\mathcal{X}$.
Moreover, the generalized Frobenius method developed in \cite{HLY,HKTY}
does not apply to the resolution $\widehat{\mathcal{X}}\to\mathcal{X}$.
Recall that the generalized Frobenius method provides a closed formula
for the local solutions around special boundary points (LCSLs), such
as $o_{i}^{(k)}$ in $\widetilde{\mathcal{X}}\to\mathcal{X}$ or $\widetilde{\mathcal{X}}^{+}\to\mathcal{X}$,
given by normal crossing boundary or exceptional divisors. In the
resolution $\widehat{\mathcal{X}}\to\mathcal{X}$, there is no way
to have such special boundary points by the three exceptional divisors. 
\end{rem}
~

\section{\textbf{\textup{Blowing-up the Baily-Borel-Satake compactification
$\mathcal{M}_{6}$ \label{sec:BBS}}}}

We will study the relationship between the Baily-Borel-Satake compactification
$\mathcal{M}_{6}$ and the compactification $\mathcal{M}_{3,3}$,
which appears naturally from computing the period integrals. We recall
that the compactification $\mathcal{M}_{6}$ is birational to $\mathcal{M}_{3,3}$
with the birational map given by (\ref{eq:birat-map-M6-M33}). 

~

\subsection{Birational map $\phi:\mathcal{M}_{3,3}\protect\dashrightarrow\mathcal{M}_{6}$ }

Since both $\mathcal{M}_{6}$ and $\mathcal{M}_{3,3}$ have descriptions
in term of GIT quotients, the birational map $\phi$ can be given
explicitly by writing the relevant semi-invariants \cite{DoOrt,Reuv}.
We have sketched the results in Appendix \ref{sec:Appendix-birat-M};
in particular, we have given the explicit form of the birational map
using the (weighted-)homogeneous coordinates $[X_{0},X_{1},...,X_{5}]\in\mathbb{P}^{5}$
for $\mathcal{M}_{3,3}$ and $[Y_{0},...,Y_{4},Y_{5}]\in\mathbb{P}(1^{5},2)$
for $\mathcal{M}_{6}$. 
\begin{lem}
The following properties holds: \end{lem}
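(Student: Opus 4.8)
The plan is to prove the lemma by direct computation with the explicit semi-invariant presentation of $\phi$ recorded in Appendix \ref{sec:Appendix-birat-M}, exploiting the fact that both source and target are cut out by very simple equations: $\mathcal{M}_{3,3}=\{X_0X_1X_2=X_3X_4X_5\}\subset\mathbb{P}^5$ and $\mathcal{M}_6=\{Y_5^2=F_4(Y_0,\dots,Y_4)\}\subset\mathbb{P}(1^5,2)$ with $F_4$ the Igusa quartic. First I would substitute the given weighted-homogeneous expressions $Y_k=Y_k(X)$ into $Y_5^2-F_4(Y_0,\dots,Y_4)$ and verify, modulo the principal ideal generated by $X_0X_1X_2-X_3X_4X_5$, that the result vanishes identically. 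This shows $\phi$ carries $\mathcal{M}_{3,3}$ into $\mathcal{M}_6$, so that $\phi$ is well defined as a rational map on the locus where the $Y_k(X)$ do not vanish simultaneously. The $\mathbb{C}^*$- and weighted scalings inherited from the two GIT constructions guarantee that $\phi$ descends to the quotients, and the weights of the $Y_k(X)$ must be checked to be compatible with the grading of $\mathbb{P}(1^5,2)$.

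Next, to establish the birational properties, I would exhibit the inverse map $\phi^{-1}\colon\mathcal{M}_6\dashrightarrow\mathcal{M}_{3,3}$ from the corresponding semi-invariants of six ordered points (again read off from \cite{DoOrt,Reuv} and Appendix \ref{sec:Appendix-birat-M}), and check that both composites $\phi^{-1}\circ\phi$ and $\phi\circ\phi^{-1}$ reduce to the identity after clearing a common monomial factor and working modulo the two defining ideals. The only genuine subtlety here is bookkeeping of that common scalar factor, which must be verified to be a unit on the relevant dense open chart; because the paper already relies on Singular \cite{DGPS-Singular} (and TOPCOM \cite{TOPCOM}) for analogous ideal-theoretic computations, I would discharge both the membership $\phi(\mathcal{M}_{3,3})\subset\mathcal{M}_6$ and the two composition identities there rather than by hand.

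The remaining and most delicate point is the precise description of the indeterminacy and contracted loci of $\phi$ and $\phi^{-1}$, since this is exactly the data needed to match the blow-up geometry of $\mathcal{M}_{3,3}$ in Propositions \ref{prop:BlowUp-along-xyz}, \ref{prop:BlowUp-at-p1p2} and \ref{prop:flip-ell-P2} to the boundary strata (the $15$ singular lines and $15$ singular points) of the Baily-Borel-Satake compactification. Concretely, I would compute the saturation of the ideal $(Y_0(X),\dots,Y_5(X))+(X_0X_1X_2-X_3X_4X_5)$, identify its irreducible components, and compare them with the six coordinate points and nine singular coordinate lines of Proposition \ref{prop:lines-in-M33}; then repeat the analysis for $\phi^{-1}$ on $\mathcal{M}_6$. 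The main obstacle I anticipate is organizing this comparison without a combinatorial explosion: the distinguished coordinate points and lines of $\mathcal{M}_{3,3}$ and the boundary strata of $\mathcal{M}_6$ are permuted by the underlying symmetry on the six lines, so the argument should be phrased equivariantly with respect to that symmetry, reducing the verification to one representative of each orbit and letting the group action propagate the conclusion. Keeping the $2{:}1$ structure of $\Phi_Y$ over the Igusa quartic in view is what ultimately pins down how each contracted locus on one side is created or destroyed on the other.
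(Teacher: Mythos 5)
Your route is the same as the paper's: the lemma is proved simply by inspecting the explicit coordinate formulas (\ref{eq:YbyX-appndix}) and (\ref{eq:XbyY-appendix}) in Appendix \ref{sec:Appendix-birat-M}, and your plan of checking the image relation, the two compositions, and the base loci by ideal computations is a (considerably heavier) implementation of that one-line argument. Two remarks. First, the only content of the lemma is the location of the indeterminacy loci, and here your stated expectation is off target: the base locus of $\phi$ is \emph{not} related to the six coordinate points and nine singular lines of Proposition \ref{prop:lines-in-M33}. The five linear forms $Y_1=X_1-X_5$, $Y_4=X_0-X_5$, $Y_2=X_3-X_2$, $Y_3=X_4-X_2$, $Y_0=X_0+X_1+X_2-X_3-X_4-X_5$ already force $X_0=\cdots=X_5$, i.e.\ the single smooth point $[1,1,1,1,1,1]$, at which $Y_5$ also vanishes; no saturation or equivariance argument is needed, and comparing the components of this ideal with the singular locus would simply return the empty intersection. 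Second, for $\phi^{-1}$ the same shortcut works: from (\ref{eq:XbyY-appendix}) one reads off $X_0-X_5=Y_4$, $X_1-X_5=Y_1$, $X_3-X_2=Y_2$, $X_4-X_2=Y_3$ and $\sum_{i\le 2}X_i-\sum_{i\ge3}X_i=Y_0$, so the $X_i$ cannot all vanish where $Y_0\neq0$, which is all the lemma claims. The verifications that $\phi$ lands on the double cover of the Igusa quartic and that the composites are the identity are sensible sanity checks but belong to the birationality statement already imported from \cite{DoOrt,Reuv}, not to this lemma; likewise the $S_6$-equivariance and the $2{:}1$ structure of $\Phi_Y$ play no role here.
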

\begin{enumerate}
\item $\phi$ defines a map $\phi:\mathcal{M}_{3,3}\setminus\left\{ [1,1,1,1,1]\right\} \to\mathcal{M}_{6}$,
and 
\item $\phi^{-1}$ defines a map $\phi^{-1}:\mathcal{M}_{6}\setminus\left\{ Y_{0}=0\right\} \to\mathcal{M}_{3,3}$.
\end{enumerate}
~
\begin{prop}
Define the following divisor in $\mathcal{M}_{3,3}$: 
\begin{equation}
D_{0}=\left\{ X_{0}+X_{1}+X_{2}-X_{3}-X_{4}-X_{5}=0\right\} .\label{eq:divisor-D0}
\end{equation}
Then the birational map $\phi$ restricts to a 1 to 1 map 
\begin{equation}
\phi:\mathcal{M}_{3,3}\setminus D_{0}\to\phi(\mathcal{M}_{3,3}\setminus D_{0})\subset\mathcal{M}_{6}\label{eq:1to1-M33toM6}
\end{equation}
 to its image in $\mathcal{M}_{6}$.
\end{prop}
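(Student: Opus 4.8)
The plan is to prove the statement by exhibiting $\phi^{-1}$ as a genuine two-sided inverse over the relevant open set, combining the two halves of the preceding lemma with an explicit identification of $D_0$ as the $\phi$-preimage of $\{Y_0=0\}$. The guiding observation is that the linear form $X_0+X_1+X_2-X_3-X_4-X_5$ cutting out $D_0$ should correspond, under $\phi$, to the vanishing of the coordinate $Y_0$ that obstructs the inverse in part (2) of the lemma. Once this correspondence is in place, injectivity is formal.

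First I would check that $\phi$ is already a morphism on $\mathcal{M}_{3,3}\setminus D_0$. By part (1) of the preceding lemma the only indeterminacy point of $\phi$ is the point with all coordinates equal, and since $1+1+1-1-1-1=0$ this point lies on $D_0$. Hence $\mathcal{M}_{3,3}\setminus D_0$ is contained in the locus where $\phi$ is an honest morphism, and the restriction $\phi|_{\mathcal{M}_{3,3}\setminus D_0}$ is well defined everywhere.

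Next, using the explicit semi-invariant formulas recorded in Appendix \ref{sec:Appendix-birat-M}, I would compute the pullback $\phi^{*}Y_0$ and verify that, as a function on the hypersurface $\mathcal{M}_{3,3}=\{X_0X_1X_2=X_3X_4X_5\}$, its zero divisor equals $D_0$ (equivalently, $\phi^{*}Y_0$ agrees with the linear form defining $D_0$ up to a factor nonvanishing on $\mathcal{M}_{3,3}\setminus D_0$). This is the key step and the main obstacle, since it is the one place where the concrete form of $\phi$ enters; it amounts to checking the divisor identity $\mathrm{div}(\phi^{*}Y_0)|_{\mathcal{M}_{3,3}}=D_0$. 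Granting it, for every $p\in\mathcal{M}_{3,3}\setminus D_0$ the image $\phi(p)$ is a well-defined point with $Y_0(\phi(p))\neq 0$, so $\phi$ maps $\mathcal{M}_{3,3}\setminus D_0$ into $\mathcal{M}_6\setminus\{Y_0=0\}$.

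Finally I would conclude injectivity. By part (2) of the preceding lemma, $\phi^{-1}$ is a morphism on $\mathcal{M}_6\setminus\{Y_0=0\}$, so by the previous step the composite $\phi^{-1}\circ\phi$ is a morphism on $\mathcal{M}_{3,3}\setminus D_0$. Since $\phi$ and $\phi^{-1}$ are mutually inverse birational maps, this composite coincides with the identity on a dense open subset; as $\mathcal{M}_{3,3}\setminus D_0$ is reduced and $\mathcal{M}_{3,3}$ is separated, two morphisms agreeing on a dense open must agree everywhere, so $\phi^{-1}\circ\phi=\mathrm{id}$ on all of $\mathcal{M}_{3,3}\setminus D_0$. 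In particular $\phi$ is injective there, giving the asserted $1$-to-$1$ map onto its image $\phi(\mathcal{M}_{3,3}\setminus D_0)\subset\mathcal{M}_6$. The only substantive ingredient is the divisor identification $\mathrm{div}(\phi^{*}Y_0)|_{\mathcal{M}_{3,3}}=D_0$; the rest of the argument is purely formal.
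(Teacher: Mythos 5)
Your proposal is correct and matches the paper's (very terse) proof, which simply points to the explicit formulas (\ref{eq:YbyX-appndix}) and (\ref{eq:XbyY-appendix}): the step you flag as the main obstacle is in fact immediate, since the first line of (\ref{eq:YbyX-appndix}) reads $Y_{0}=X_{0}+X_{1}+X_{2}-X_{3}-X_{4}-X_{5}$, i.e.\ $\phi^{*}Y_{0}$ is literally the linear form cutting out $D_{0}$, with no extra factor to worry about. The remaining argument --- $\phi$ is a morphism off $D_{0}$ because the indeterminacy point $[1,1,1,1,1,1]$ lies on $D_{0}$, the image lands in $\{Y_{0}\neq0\}$ where $\phi^{-1}$ is a morphism by the preceding lemma, and $\phi^{-1}\circ\phi=\mathrm{id}$ by density and separatedness --- is exactly the intended reasoning.
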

The proofs of the above lemma and proposition are easy from the explicit
forms (\ref{eq:YbyX-appndix}) and (\ref{eq:XbyY-appendix}) of the
birational maps $\phi$ and $\phi^{-1}$, respectively. Further properties,
e.g., the restriction $\phi:D_{0}\setminus\left\{ [1,1,1,1,1]\right\} \to\mathcal{M}_{6}$,
can be worked out, but we leave these to the reader (see \cite[Sect.2.4]{Reuv}).

\subsection{Singularities of $\mathcal{M}_{6}$ }

Singularities of $\mathcal{M}_{6}$ are well-studied objects in the
literatures (see \cite{YoshidaEtal,Hunt} for example). Here we summarize
the results from our viewpoints and using the (weighted-)homogeneous
coordinate $[Y_{0},Y_{1},...,Y_{5}]$ of $\mathbb{P}(1^{5},2)$. 
\begin{prop}
The variety $\mathcal{M}_{6}$ is singular along 15 lines which intersect
at 15 points which, respectively, correspond to one dimensional boundary
components and zero dimensional boundary components in the Baily-Borel-Satake
compactification. These 15 lines are located in $\left\{ Y_{5}=0\right\} \simeq\mathbb{P}^{4}\subset\mathbb{P}(1^{5},2)$. \end{prop}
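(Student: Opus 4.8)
The plan is to reduce the computation of $\mathrm{Sing}(\mathcal{M}_6)$ to the singular locus of the branch Igusa quartic, and then to match it with the Baily--Borel boundary via the isomorphism $\mathcal{M}_6\cong\overline{\Gamma_M(1+i)\backslash\mathbb{H}_2}$ quoted from \cite{Matsu93}. First I would record that the unique singular point $[0:\cdots:0:1]$ of the ambient $\mathbb{P}(1^5,2)$ does not lie on $\mathcal{M}_6$, since there $F_4(0,\dots,0)=0\neq 1$; hence singularities of $\mathcal{M}_6$ are detected by the Jacobian criterion in the smooth weight-one charts. Forgetting $Y_5$ exhibits $\mathcal{M}_6$ as the double cover $[Y_0:\cdots:Y_5]\mapsto[Y_0:\cdots:Y_4]$ of $\mathbb{P}^4$ branched along $\{F_4=0\}$. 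For the local equation $Y_5^2=F_4$ one has $d(Y_5^2-F_4)=2Y_5\,dY_5-dF_4$, so a point is singular exactly when $Y_5=0$ and $dF_4=0$; as $F_4$ is homogeneous of degree four, Euler's identity $\sum_{i=0}^4 Y_i\,\partial_{Y_i}F_4=4F_4$ makes $F_4=0$ automatic there. Therefore $\mathrm{Sing}(\mathcal{M}_6)$ maps isomorphically onto $\mathrm{Sing}\{F_4=0\}\subset\{Y_5=0\}\simeq\mathbb{P}^4$, which already proves the last assertion and reduces everything to the Igusa quartic.

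Next I would determine $\mathrm{Sing}\{F_4=0\}$, the common zero locus of the five cubics $\partial_{Y_i}F_4$ (computed from $F_4=P^2+4Y_0Y_1Y_4Y_s$, $P=Y_0Y_s+Y_2Y_3-Y_1Y_4$, and $Y_s=Y_0-Y_1+Y_2+Y_3-Y_4$ by the chain rule). Rather than grinding out these asymmetric equations, I would pass to the symmetric theta coordinates of \cite{Matsu93}, in which, by (\ref{eq:Theta-equation}), the branch locus is $\big(\sum_i\Theta_i^4\big)^2=4\sum_i\Theta_i^8$ with its $S_6$-symmetry manifest. In these coordinates the singular locus of the Igusa quartic is classically known (see \cite{YoshidaEtal,Hunt}) to be exactly $15$ lines, indexed by the $15$ synthemes (partitions of $\{1,\dots,6\}$ into three pairs), meeting pairwise in exactly $15$ points, indexed by the $15$ duads, and forming the Cremona--Richmond $15_3$ configuration: each line contains three of the points and each point lies on three of the lines. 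I would either cite this or verify it directly by exhibiting the $15$ lines, checking that all five partials vanish along each, and adding a dimension count showing that no further components occur.

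Finally I would identify these strata with the Baily--Borel boundary. The interior $P(3,6)$, being the geometric quotient of the general-position locus $M_{3,6}^{o}$, is smooth; hence under $\mathcal{M}_6\cong\overline{\Gamma_M(1+i)\backslash\mathbb{H}_2}$ the singular locus is contained in the boundary. For the type $I_{2,2}$ domain $\mathbb{H}_2$ the rational boundary components have dimensions $1$ and $0$, and counting $\Gamma_M(1+i)$-orbits as in \cite{Matsu93} yields exactly $15$ of each, with incidences matching the $15_3$ configuration. Comparing this arithmetic stratification with the geometric singular locus found above --- both consist of the same $15$ lines and $15$ triple points, compatibly with the $S_6$-action --- I conclude that the $15$ singular lines are precisely the one-dimensional boundary components and their $15$ intersection points the zero-dimensional ones.

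The main obstacle is the middle step: showing that $\mathrm{Sing}\{F_4=0\}$ is exactly $15$ lines meeting in $15$ triple points and nothing more. In the coordinates $Y_0,\dots,Y_4$ the five cubic partials are unsymmetric and cumbersome, so the real work lies in passing to the symmetric theta coordinates, where the $S_6$-action and the syntheme/duad bookkeeping render the lines and their incidences transparent, and then transporting the description back through the identification of the $Y_k$ with the $\Theta_i^2$. Once the geometric count is in hand, matching it with the arithmetic boundary count is routine via \cite{Matsu93}.
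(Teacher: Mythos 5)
Your proposal is correct and follows essentially the same route as the paper, whose proof simply cites the classical literature (van der Geer, Hunt) for the singular locus of the Igusa quartic and the Baily--Borel boundary description, and records the fifteen line ideals explicitly in Appendix \ref{sec:appendix-Singular-L}. The only substantive addition on your side is spelling out the Jacobian-criterion reduction from $\mathrm{Sing}(\mathcal{M}_6)$ to $\mathrm{Sing}\left\{ F_{4}=0\right\} \subset\left\{ Y_{5}=0\right\} $ (using that the cone point of $\mathbb{P}(1^{5},2)$ misses $\mathcal{M}_{6}$), a step the paper leaves implicit.
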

\begin{proof}
The results are well-known in the literatures (see \cite{vanDerGeer,Hunt}
for example). An explicit description of the boundary components is
given in Appendix \ref{sec:appendix-Singular-L}. \end{proof}
\begin{prop}
Each of the 15 points of singularities is given by the intersection
of corresponding three lines. Vice versa, each of the 15 lines contains
three intersection points with other two lines at each intersection. \end{prop}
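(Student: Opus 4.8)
The plan is to reduce the statement to a purely combinatorial incidence relation and then to verify that relation using the explicit boundary data of Appendix \ref{sec:appendix-Singular-L} together with the symmetry of the configuration. The six ordered lines $\ell_{1},\dots,\ell_{6}$ carry a natural permutation action of the symmetric group $S_{6}$, and this action descends to $\mathcal{M}_{6}$ and preserves its singular locus; hence $S_{6}$ permutes the $15$ singular lines among themselves and the $15$ singular points among themselves. First I would recall from Appendix \ref{sec:appendix-Singular-L} the natural labelling of these strata: the $15$ one-dimensional boundary components are indexed by the $15$ \emph{duads} $\{i,j\}\subset\{1,\dots,6\}$, while the $15$ zero-dimensional components are indexed by the $15$ \emph{synthemes}, i.e. the partitions of $\{1,\dots,6\}$ into three disjoint pairs (the argument is identical under the opposite labelling, since the resulting configuration is self-dual). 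Since $\binom{6}{2}=15$ and $6!/(2^{3}3!)=15$, both label sets have the right size, and $S_{6}$ acts transitively on each.

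Granting that these labels are compatible with the geometry, the incidence becomes the tautological one: the singular point $Q_{M}$ attached to a syntheme $M$ lies on the singular line $L_{\{i,j\}}$ exactly when $\{i,j\}\in M$. The two counting assertions then follow at once. A syntheme $M=\{\{a,b\},\{c,d\},\{e,f\}\}$ contains precisely three duads, so each line passes through exactly three of the points; conversely, once a duad $\{i,j\}$ is fixed, the remaining four indices can be split into two pairs in exactly three ways, so each point lies on exactly three of the lines. This is nothing but the self-dual $(15_{3})$ Cremona--Richmond configuration. The final clause is then automatic: the three lines through $Q_{M}$ correspond to the three duads of $M$, and since any two of these duads already determine $M$, distinct lines through $Q_{M}$ meet only there and are pairwise transverse, so at each of its three points a given line meets exactly two of the others.

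The step I expect to be the real work, and which I would settle with the coordinates of Appendix \ref{sec:appendix-Singular-L}, is matching the geometric incidence with the combinatorial one and, in particular, ruling out any \emph{extra} incidences so that exactly three lines pass through each point. Here the symmetry pays off decisively. A direct orbit count shows that $S_{6}$ acts with exactly two orbits on the $225$ point--line pairs, namely the $45$ incident pairs (duad contained in syntheme) and the $180$ non-incident pairs, and that it is transitive on each. Since the relation ``$Q\in L$'' is $S_{6}$-equivariant, it therefore suffices to test one representative of each type: I would check in the $[Y_{0}:\cdots:Y_{5}]$ coordinates that $L_{\{1,2\}}$ passes through $Q_{\{12,34,56\}}$ and misses $Q_{\{13,24,56\}}$, and then propagate both facts across the two orbits. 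The only obstacle is the bookkeeping needed to confirm that the coordinate representatives of the appendix transform under $S_{6}$ in the way used to carry the single verification over the whole configuration; once this compatibility is checked, the counts above finish the proof.
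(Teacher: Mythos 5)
Your strategy --- organizing the $15$ lines and $15$ points as a self-dual $(15_{3})$ Cremona--Richmond configuration indexed by duads and synthemes, and using $S_{6}$-equivariance to reduce all incidences to one representative per orbit --- is a genuinely different and more structural route than the paper's, which simply verifies the claim by inspecting the fifteen explicit ideals listed in Appendix \ref{sec:appendix-Singular-L} together with Fig.~5. Your orbit count on point--line pairs ($225=45+180$, with $S_{6}$ transitive on each class) is correct, and, granting the labelling, your two representative checks do yield ``exactly three lines through each of the fifteen points.''

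Two things are missing, however. First, the equivariant labelling is the entire content of the reduction and you have deferred it: Appendix \ref{sec:appendix-Singular-L} lists fifteen ideals with no duad/syntheme indexing, and the $S_{6}$-action on the $Y_{i}$ is only given implicitly through the semi-invariants (\ref{eq:Ys-appendix}) and Lemma \ref{lem:linear-action-Yi}; until you exhibit the bijection between the listed ideals and the duads and verify that $\psi(\sigma)$ permutes them accordingly, the single-representative checks cannot be propagated. Second, and more substantively, your two orbit checks control only which of the fifteen distinguished points lie on which lines; they do not exclude an intersection of two of the lines at a point \emph{outside} the distinguished set. Concretely, if $L_{\{1,2\}}$ and $L_{\{1,3\}}$ (overlapping duads, hence contained in no common syntheme) happened to meet somewhere in $\{Y_{5}=0\}$, each would acquire a fourth intersection point and the second assertion of the proposition would fail; your remark that ``distinct lines through $Q_{M}$ meet only there'' only handles lines that already share a distinguished point. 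You therefore need a third orbit computation, on unordered pairs of lines: $S_{6}$ is transitive on pairs of overlapping duads and on pairs of disjoint duads, the disjoint case is already settled (two distinct projective lines meet in at most one point, and such a pair already meets at its common syntheme point), so it suffices to check that one representative overlapping pair is disjoint as a pair of lines. With that added check and the labelling made explicit, your argument closes and recovers the paper's conclusion by symmetry rather than by exhaustive inspection.
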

\begin{proof}
We verify the claimed properties using the equations for the 15 lines
in Appendix \ref{sec:appendix-Singular-L} and schematic description
of the 15 lines in Fig.5.
\end{proof}
~

\begin{figure}[h] 
\resizebox{11cm}{!}{
\includegraphics[trim=0cm 8cm 1cm 1cm,
clip=true]{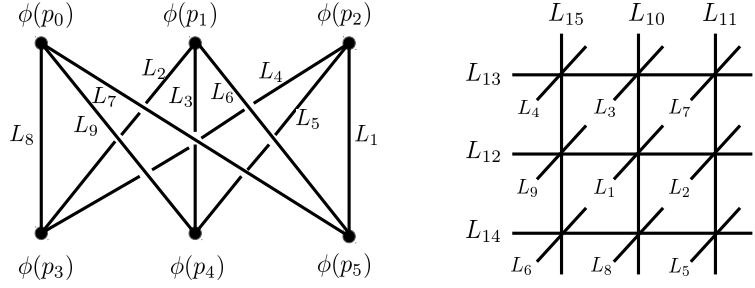}}
\caption{{\bf Fig.5} Configuration of the 15 lines. $\phi(p_k)$ represent the images of the coordinate points $p_k$ in ${\mathcal M}_{3,3}$. Lines $L_k (k=1,...,15)$ are given explicitly in Appendix \ref{sec:appendix-Singular-L}. $L_1,...,L_9$ shown in the left correspond to the 9 singular lines in ${\mathcal M}_{3,3}$. Lines  $L_{10},...,L_{15}$ are in the divisor $\{Y_0=0\}$ where $\phi^{-1}$ is not defined, and intersect with $L_1,...,L_9$ at one point as shown in the right. } 
\end{figure}
\begin{prop}
The 9 lines of singularities in $\mathcal{M}_{3,3}$ described in
Proposition \ref{prop:lines-in-M33} correspond to 9 of 15 lines in
$\mathcal{M}_{6}$ by the birational map $\phi:\mathcal{M}_{3,3}\dashrightarrow\mathcal{M}_{6}$.
In particular, the local structure $\mathcal{M}_{3,3}^{Loc}$ near
the 6 point is isomorphically mapped to the corresponding intersection
points of lines in $\mathcal{M}_{6}$. \end{prop}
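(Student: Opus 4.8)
The plan is to deduce the whole statement from two facts already in hand: the preceding Proposition, which says that $\phi$ restricts to a $1$-to-$1$ map $\mathcal{M}_{3,3}\setminus D_0\to\phi(\mathcal{M}_{3,3}\setminus D_0)\subset\mathcal{M}_6$ as in (\ref{eq:1to1-M33toM6}), and the Lemma, which says $\phi^{-1}$ is a morphism precisely on $\mathcal{M}_6\setminus\{Y_0=0\}$. Since $\phi$ and $\phi^{-1}$ are mutually inverse wherever both are regular, their common isomorphism locus $\mathcal{M}_{3,3}\setminus D_0$ is carried \emph{biregularly} onto its image, and that image is forced to lie in the domain of $\phi^{-1}$, i.e. $\phi(\mathcal{M}_{3,3}\setminus D_0)\subset\{Y_0\neq0\}$. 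In particular $\phi$ is a local isomorphism at every point of $\mathcal{M}_{3,3}\setminus D_0$, so it preserves smooth and singular points. Before using this I would record that none of the nine coordinate lines $\overline{p_ip_j}$ $(0\le i\le2,\,3\le j\le5)$ is contained in $D_0$: on $\overline{p_0p_3}$ the linear form of (\ref{eq:divisor-D0}) restricts to $s-t$ in the parameter $[s,t]$, so $D_0$ meets the line only at $[1,0,0,1,0,0]$, and the same holds for the remaining lines by symmetry. Thus each line minus one point lies in the isomorphism locus.

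Next I would identify the images. By Proposition \ref{prop:lines-in-M33} the nine lines make up the singular locus of $\mathcal{M}_{3,3}$, while $\mathcal{M}_6$ is singular exactly along its fifteen lines. Because a local isomorphism matches singular loci, $\phi\big(\overline{p_ip_j}\setminus D_0\big)$ lands in the part of the singular locus of $\mathcal{M}_6$ contained in the image; as that image avoids $\{Y_0=0\}$ and $L_{10},\dots,L_{15}\subset\{Y_0=0\}$ (Fig.5 and Appendix \ref{sec:appendix-Singular-L}), the image lies in $L_1\cup\cdots\cup L_9$. This image is irreducible and one-dimensional, hence dense in a single one of those nine lines. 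Injectivity of $\phi$ on $\mathcal{M}_{3,3}\setminus D_0$ then forces the nine target lines to be distinct: if two images were dense in the same line $L$, they would be dense opens of the irreducible $L$ and so would meet, contradicting injectivity. Hence $\phi$ induces a bijection from the nine singular lines of $\mathcal{M}_{3,3}$ onto $\{L_1,\dots,L_9\}$.

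For the final assertion I would use that the six coordinate points $p_i$ likewise avoid $D_0$: for $p_0=[1,0,0,0,0,0]$ the form of (\ref{eq:divisor-D0}) takes the value $1\neq0$, and similarly for the others. Consequently a neighbourhood of each $p_i$, shrunk so as to miss $D_0$, lies in the isomorphism locus, so $\phi$ is a local isomorphism there and carries the local model $\mathcal{M}_{3,3}^{Loc}$ biregularly onto a neighbourhood of $\phi(p_i)$. The three singular lines of $\mathcal{M}_{3,3}$ through $p_i$ go to three of the lines $L_1,\dots,L_9$ through $\phi(p_i)$, and since each of the fifteen points of $\mathcal{M}_6$ is precisely the intersection of three of its singular lines, $\phi(p_i)$ is one such point; this is the claimed isomorphism onto the corresponding intersection point.

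The conceptual skeleton above uses no coordinates, and I expect the only real work to be the bookkeeping that pins the correspondence down to the explicit labelling of Fig.5, matching each $\overline{p_ip_j}$ and each $p_i$ with its specific target $L_k$ and intersection point; this is where the explicit semi-invariant formulas (\ref{eq:YbyX-appndix}) and (\ref{eq:XbyY-appendix}) enter, and it is the main, if routine, obstacle. A secondary point to verify is that deleting the finitely many $D_0$-points from each line (and shrinking neighbourhoods of the $p_i$) does not disturb the irreducibility and density arguments, which is immediate since a line minus a point remains irreducible of dimension one.
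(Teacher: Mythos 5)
Your proposal is correct and follows essentially the same route as the paper: both arguments rest on checking that the nine coordinate lines are not contained in $D_{0}$ and the six coordinate points avoid $D_{0}$, so that $\phi$ is an isomorphism near them (the image lying in $\{Y_{0}\neq0\}$ by (\ref{eq:YbyX-appndix})) and therefore carries singular lines to singular lines and $\mathcal{M}_{3,3}^{Loc}$ isomorphically onto the local structure at the corresponding intersection points. You merely supply more detail than the paper does (the one-point intersection of each line with $D_{0}$, the exclusion of $L_{10},\dots,L_{15}$ via $\{Y_{0}=0\}$, and the distinctness of the nine image lines from injectivity), all of which is consistent with the paper's argument.
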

\begin{proof}
Recall that the 9 lines in $\mathcal{M}_{6}$ come from coordinate
lines of $\mathbb{P}^{5}$ and intersect at 6 coordinate points. None
of the 9 lines nor their intersection points are contained in $D_{0}$
(\ref{eq:divisor-D0}). Hence these lines determine the corresponding
lines in $\mathcal{M}_{6}$ under the birational map $\phi$, along
which $\mathcal{M}_{6}$ is singular. Also the local structure $\mathcal{M}_{3,3}^{Loc}$
is mapped isomorphically to $\mathcal{M}_{6}$. 
\end{proof}
In the next subsection, we will see that the local structure near
all the 15 singular points in $\mathcal{M}_{6}$ are isomorphic to
$\mathcal{M}_{3,3}^{Loc}$. 

~

\subsection{$S_{6}$ action on $\mathcal{M}_{6}$}

Now recall that the homogeneous coordinate $Y_{i}$ is related to
the $3\times6$ matrix $A$ by (\ref{eq:Ys-appendix}). We note that
there is a natural action of the symmetric group $S_{6}$ sending
$A\to A\sigma:=A\rho(\sigma)$ by the permutation matrix $\rho(\sigma)$
representing $\sigma\in S_{6}$. This naturally induces linear actions
on homogeneous coordinates $Y_{i}(A)\mapsto Y_{i}(A\sigma)$. 
\begin{lem}
\label{lem:linear-action-Yi}The action $Y_{i}=Y_{i}(A)\mapsto Y_{i}(A\sigma)$
is linear and preserves the homogeneous weights of the coordinate
$[Y_{0},...,Y_{4},Y_{5}]\in\mathbb{P}(1^{5},2)$. \end{lem}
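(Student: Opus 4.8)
The plan is to reduce everything to the explicit polynomial expressions for the semi-invariants $Y_i(A)$ in terms of the $3\times 6$ matrix $A$ recorded in (\ref{eq:Ys-appendix}), and to exploit the fact that $S_6$ is generated by transpositions, so that it suffices to verify the assertion for a single adjacent transposition $\sigma=(j\,\,j{+}1)$. First I would record how column permutation acts on the building blocks of the $Y_i$: since each $Y_i$ is built from the $3\times 3$ minors (Plücker coordinates) $D(i_1,i_2,i_3)$ of $A$, right multiplication by the permutation matrix $\rho(\sigma)$ merely relabels these minors, $D(i_1,i_2,i_3)\mapsto \mathrm{sgn}(\sigma)\,D(\sigma^{-1}i_1,\sigma^{-1}i_2,\sigma^{-1}i_3)$, the global sign coming from $\det\rho(\sigma)=\mathrm{sgn}(\sigma)$. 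Because every $Y_i$ is homogeneous of a fixed degree in these minors, the factor $\mathrm{sgn}(\sigma)$ together with the residual diagonal-torus rescalings is an overall nonzero scalar, hence projectively trivial; this already shows that the map cannot lower or mix the degree of a coordinate, i.e. it preserves the homogeneous weights of $\mathbb{P}(1^5,2)$.

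Next I would treat the five weight-one coordinates $Y_0,\dots,Y_4$. The cleanest route is conceptual: $Y_0,\dots,Y_4$ together with the redundant $Y_6,\dots,Y_{10}$ are the ten semi-invariants attached to the ten unordered partitions of $\{1,\dots,6\}$ into two triples (equivalently, the ten even characteristics in Matsumoto's identification $\Theta_i(W)^2\leftrightarrow Y_k$). Column permutation simply permutes these combinatorial labels, so on the ten-dimensional span $\langle Y_0,\dots,Y_4,Y_6,\dots,Y_{10}\rangle$ the group $S_6$ acts through the corresponding permutation representation, which is manifestly linear. Since the five relations (\ref{eq:AppC-rel-Ys}) among the ten coordinates are exactly the $S_6$-stable Plücker relations cutting out $\mathbb{P}^4\subset\mathbb{P}^9$, the action descends to a genuinely linear action on the coordinates $Y_0,\dots,Y_4$. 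In practice I would confirm this by feeding one transposition into (\ref{eq:Ys-appendix}) and reading off the resulting linear substitution, which also fixes all signs.

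The remaining and most delicate point is the weight-two coordinate $Y_5$, and this is where I expect the real obstacle to lie. A priori a weight-preserving graded automorphism of $\mathbb{P}(1^5,2)$ could send $Y_5\mapsto a\,Y_5+q(Y_0,\dots,Y_4)$ with $q$ quadratic, so one must rule out the quadratic admixture and pin down the scalar $a$. Here I would use the double-cover relation $Y_5^2=F_4(Y_0,\dots,Y_4)$ from (\ref{eq:DoubleCover}): since the Igusa quartic $F_4$ is $S_6$-semi-invariant under the linear action just established, applying $\sigma$ forces $Y_5(A\sigma)^2 = F_4(Y(A\sigma)) = \chi(\sigma)\,F_4(Y(A))$ for a scalar character $\chi$, whence $Y_5(A\sigma)=\pm\sqrt{\chi(\sigma)}\,Y_5(A)$; the sign and scalar are then fixed by choosing a branch consistently and tracking the $\mathrm{sgn}(\sigma)$ and torus factors of the first paragraph. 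This shows $Y_5$ maps to a weight-two multiple of itself with no quadratic part, completing the claim. The only genuine bookkeeping is the sign and normalization from $\det\rho(\sigma)$ and the diagonal torus, which together contribute an overall scalar character but nothing projectively essential.
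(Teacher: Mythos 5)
Your argument is correct in substance, and for the weight-one coordinates it is essentially an expanded version of the paper's one-line proof: the paper simply notes that $Y_{i}(A\sigma)$ is a semi-invariant of the same degree as $Y_{i}(A)$ and that the $Y_{i}$ generate the semi-invariants in each degree, which forces linearity; your signed-permutation description of the ten products $[i\,j\,k][l\,m\,n]$ together with the $S_{6}$-stability of the relations (\ref{eq:AppC-rel-Ys}) is just the concrete form of that observation (the reduction to transpositions is harmless but not needed). Where you genuinely diverge is on $Y_{5}$. The paper's degree argument, read literally, only gives $Y_{5}(A\sigma)=a\,Y_{5}+q(Y_{0},\dots,Y_{4})$ with $q$ quadratic, and the paper does not address the admixture $q$ (nor does it need to for the later applications, which use only the action on $Y_{0},\dots,Y_{4}$ and the resulting graded automorphism of $\mathbb{P}(1^{5},2)$). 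Your idea of killing $q$ via the double-cover relation is a worthwhile supplement, but as written it is circular: you invoke the $S_{6}$-semi-invariance of $F_{4}$ ``under the linear action just established'', yet that semi-invariance is equivalent to $Y_{5}(A\sigma)^{2}=\chi(\sigma)\,Y_{5}(A)^{2}$, which is precisely what is to be proved. The repair is short and you should record it: the coordinate ring of $\mathcal{M}_{6}$ is a free module over $\mathbb{C}[Y_{0},\dots,Y_{4}]$ with basis $\{1,Y_{5}\}$, so squaring $Y_{5}(A\sigma)=a\,Y_{5}+q$ and comparing with $Y_{5}(A\sigma)^{2}=F_{4}(Y(A\sigma))\in\mathbb{C}[Y_{0},\dots,Y_{4}]$ forces $a\,q=0$; and $a\neq0$ because $a=0$ would place $Y_{5}=Y_{5}\bigl((A\sigma)\sigma^{-1}\bigr)$ inside the subring generated by the weight-one coordinates, contradicting freeness. (Equivalently, one notes that the branch divisor $\{F_{4}=0\}$ is intrinsic to the double cover and hence $S_{6}$-stable, which legitimizes your character $\chi$.) Your fallback of substituting a single transposition into (\ref{eq:Ys-appendix}) also settles everything and is closest in spirit to what the paper intends.
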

\begin{proof}
The claim is clear since $Y_{i}=Y_{i}(A)$ are generators of the semi-invariants
of $GL(3,\mathbb{C})$ of fixed degrees, and $Y_{i}(A\sigma)$ are
semi-invariants of the same degree with $Y_{i}(A)$.
\end{proof}
Geometric meaning of the action $A\to A\sigma$ is simply that it
changes the order of the (ordered) six points in $\mathbb{P}^{2}$.
From Lemma \ref{lem:linear-action-Yi}, it is easy to deduce the following
proposition.
\begin{prop}
The linear action $Y_{i}=Y_{i}(A)\mapsto Y_{i}(A\sigma)$ naturally
defines the corresponding automorphism $\psi(\sigma):\mathcal{M}_{6}\simeq\mathcal{M}_{6}$
for $\sigma\in S_{6}$. 
\end{prop}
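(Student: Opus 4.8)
The plan is to deduce the statement from two ingredients: the graded automorphism of the ambient space furnished by Lemma \ref{lem:linear-action-Yi}, and the description of $\mathcal{M}_{6}$ as the Zariski closure of the image of the semi-invariant map. First I would promote the coordinate substitution $Y_{i}\mapsto Y_{i}(A\sigma)$ to an honest automorphism of $\mathbb{P}(1^{5},2)$. By Lemma \ref{lem:linear-action-Yi} this substitution respects the grading: the weight-one coordinates $Y_{0},\dots,Y_{4}$ go to weight-one linear combinations of themselves, and the weight-two coordinate $Y_{5}$ goes to a weight-two semi-invariant. Since the $Y_{i}$ generate the ring of $GL(3,\mathbb{C})$-semi-invariants, this assignment extends to a well-defined graded automorphism of that ring, hence induces a morphism $\psi(\sigma)$ of $\operatorname{Proj}$, i.e. of $\mathbb{P}(1^{5},2)$. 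The relation $A(\sigma\tau)=(A\sigma)\tau$ together with the functoriality of the $Y_{i}$ gives $\psi(\sigma\tau)=\psi(\sigma)\psi(\tau)$, so that each $\psi(\sigma)$ is invertible on $\mathbb{P}(1^{5},2)$, with inverse $\psi(\sigma^{-1})$.

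The key step is then to show that $\psi(\sigma)$ preserves the closed subvariety $\mathcal{M}_{6}$. For this I would use that $\mathcal{M}_{6}\subset\mathbb{P}(1^{5},2)$ is the Zariski closure of the image of the semi-invariant map $A\mapsto[Y_{0}(A),\dots,Y_{5}(A)]$ as $A$ ranges over $M_{3,6}^{o}$. The column permutation $A\mapsto A\sigma$ preserves $M_{3,6}^{o}$, because the general-position condition — nonvanishing of all the $3\times3$ minors $D(i_{1},i_{2},i_{3})$ — is manifestly invariant under reordering the columns. By the very construction of $\psi(\sigma)$ we have the equivariance $[Y_{0}(A\sigma),\dots,Y_{5}(A\sigma)]=\psi(\sigma)\bigl([Y_{0}(A),\dots,Y_{5}(A)]\bigr)$ on this locus. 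Hence $\psi(\sigma)$ carries the dense constructible subset $\{[Y_{i}(A)]:A\in M_{3,6}^{o}\}$ into $\mathcal{M}_{6}$; since $\psi(\sigma)$ is a morphism and $\mathcal{M}_{6}$ is closed, passing to Zariski closures yields $\psi(\sigma)(\mathcal{M}_{6})\subseteq\mathcal{M}_{6}$. Running the same argument with $\sigma^{-1}$ gives the reverse inclusion, so $\psi(\sigma)$ restricts to an automorphism $\psi(\sigma)\colon\mathcal{M}_{6}\xrightarrow{\;\sim\;}\mathcal{M}_{6}$, and $\psi$ is a homomorphism $S_{6}\to\operatorname{Aut}(\mathcal{M}_{6})$.

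I expect the only genuinely delicate point to be the interaction with the weight-two coordinate $Y_{5}$ rather than anything at the boundary. Over the open stratum the equivariance is automatic, and the closure argument handles the boundary points (the images of the non-generic semistable configurations corresponding to the $15$ singular lines and their intersections) in one stroke, precisely because $\psi(\sigma)$ is a single algebraic automorphism of $\mathbb{P}(1^{5},2)$ and $\mathcal{M}_{6}$ is its image-closure; no stratum-by-stratum check is required. What one must verify is that $Y_{5}(A\sigma)$ remains a weight-two semi-invariant (transforming at worst by a sign for odd $\sigma$, as befits the double-cover coordinate with $Y_{5}^{2}=F_{4}(Y_{0},\dots,Y_{4})$), so that $\psi(\sigma)$ is an automorphism of the weighted — and not the ordinary — projective space; this is exactly the weight-preservation asserted in Lemma \ref{lem:linear-action-Yi}, which is why I reduce everything to that lemma at the outset.
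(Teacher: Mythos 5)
Your argument is correct and follows the same route the paper intends: the paper gives no written proof, merely stating that the proposition is "easy to deduce" from Lemma \ref{lem:linear-action-Yi}, and your writeup is precisely the natural elaboration --- promote the weight-preserving substitution to a graded automorphism of $\mathbb{P}(1^{5},2)$, use invertibility via $\psi(\sigma^{-1})$, and preserve $\mathcal{M}_{6}$ by equivariance on the dense image of $M_{3,6}^{o}$ plus a Zariski-closure argument. The only cosmetic caveat is that with your convention $\psi$ may be an anti-homomorphism ($\psi(\sigma\tau)=\psi(\tau)\circ\psi(\sigma)$), which does not affect the conclusion that each $\psi(\sigma)$ is an automorphism.
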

We combine this isomorphism with the birational map $\phi$:~$\mathcal{M}_{3,3}\dashrightarrow\mathcal{M}_{6}$.
\begin{defn}
\label{def:phi-sigma}For $\sigma\in S_{6}$, we define the following
composite of $\psi(\sigma$) and $\phi$: 
\[
\phi_{\sigma}:\mathcal{M}_{3,3}\dashrightarrow\mathcal{M}_{6}\overset{\psi(\sigma)}{\simeq}\mathcal{M}_{6}.
\]
 ~
\end{defn}

\subsection{Covering $\mathcal{M}_{6}$ by open sets of toric varieties }

We now combine all the results about the moduli spaces $\mathcal{M}_{3,3}$
and $\mathcal{M}_{6}$. We first recall that $\mathcal{M}_{6}$ is
given by a hypersurface in $\mathbb{P}(1^{5},2)$. 
\begin{lem}
\label{lem:missing-point} The hypersurface $\mathcal{M}_{6}$ misses
the point $[0,0,0,0,0,1]\in\mathbb{P}(1^{5},2)$.\end{lem}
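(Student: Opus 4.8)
The plan is to show that the point $[0,0,0,0,0,1]\in\mathbb{P}(1^{5},2)$ fails to satisfy the defining equation $Y_{5}^{2}=F_{4}(Y_{0},\dots,Y_{4})$ of $\mathcal{M}_{6}$ in (\ref{eq:DoubleCover}). First I would simply substitute the coordinates of the candidate point into this equation: at $[0,0,0,0,0,1]$ we have $Y_{0}=Y_{1}=Y_{2}=Y_{3}=Y_{4}=0$ and $Y_{5}=1$, so the left-hand side equals $Y_{5}^{2}=1$, while the right-hand side is $F_{4}(0,0,0,0,0)=0$. Since $F_{4}$ is a homogeneous quartic in $Y_{0},\dots,Y_{4}$, it vanishes identically when all of $Y_{0},\dots,Y_{4}$ are zero; indeed each monomial of $F_{4}=(Y_{0}Y_{s}+Y_{2}Y_{3}-Y_{1}Y_{4})^{2}+4Y_{0}Y_{1}Y_{4}Y_{s}$ (with $Y_{s}=Y_{0}-Y_{1}+Y_{2}+Y_{3}-Y_{4}$) is a product of four of the $Y_{i}$'s. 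Thus the equation reads $1=0$, a contradiction, so the point does not lie on $\mathcal{M}_{6}$.

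The only subtlety worth addressing is that $\mathbb{P}(1^{5},2)$ is a \emph{weighted} projective space, so one must confirm that $[0,0,0,0,0,1]$ is a genuine point (it is, since not all coordinates vanish) and that the defining equation is well-posed as a section of the appropriate sheaf on this weighted space. Here $Y_{5}$ carries weight $2$ and $Y_{0},\dots,Y_{4}$ carry weight $1$, so both $Y_{5}^{2}$ and $F_{4}(Y_{0},\dots,Y_{4})$ have weighted degree $4$; hence the equation is weighted-homogeneous and its vanishing locus is a well-defined hypersurface, with membership tested by plugging in any representative of the point. Evaluating at the representative $(0,0,0,0,0,1)$ gives the contradiction above, and this is independent of the choice of representative because rescaling by $\lambda$ multiplies both sides by $\lambda^{4}$.

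I do not anticipate any genuine obstacle here: the statement is an elementary evaluation, and the whole point is that the ``missing'' coordinate direction $Y_{5}$ is precisely the one that cannot be reached since $F_{4}$ involves only $Y_{0},\dots,Y_{4}$. The lemma is presumably stated because this missing point is exactly the image of $[0,0,0,0,0,1]\in\mathbb{P}^{5}$ (the coordinate point $p_{5}$ of $\mathcal{M}_{3,3}$, or more precisely the locus where $\phi^{-1}$ is ill-behaved), and locating where $\mathcal{M}_{6}$ sits relative to the coordinate points of the ambient space is what matters for the subsequent covering argument. So the proof is a one-line substitution together with the remark that $F_{4}$ has no constant term, i.e.\ vanishes at the origin of the $Y_{0},\dots,Y_{4}$ coordinates.
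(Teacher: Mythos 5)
Your proof is correct and is exactly the argument the paper intends: the paper's proof simply says the property is verified from the definition (\ref{eq:DoubleCover}), and your substitution of $Y_{0}=\cdots=Y_{4}=0$, $Y_{5}=1$ into $Y_{5}^{2}=F_{4}(Y_{0},\dots,Y_{4})$, yielding $1=0$, is that verification spelled out. The remarks on weighted homogeneity and well-posedness are fine but not needed beyond this.
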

\begin{proof}
We simply verify the property from the definition (\ref{eq:DoubleCover}).\end{proof}
\begin{lem}
\label{lem:five-sigma}Take the following permutations $\sigma\in S_{6}$
\[
e,(34),(35),(24),(25)
\]
and name these by $\sigma_{k}\,(k=0,1,...,4)$ in order. Then under
the automorphism $\psi(\sigma_{k}):\mathcal{M}_{6}\simeq\mathcal{M}_{6}$,
the hyperplane $\left\{ Y_{0}=0\right\} \subset\mathcal{M}_{6}$ transforms
to $\left\{ Y_{k}=0\right\} \subset\mathcal{M}_{6}$ for $k=0,1,...,4$,
respectively. \end{lem}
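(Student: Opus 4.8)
The plan is to reduce the statement to a single algebraic identity among the semi-invariants $Y_i(A)$ and then verify that identity by a direct bracket computation. By Lemma \ref{lem:linear-action-Yi}, each $\psi(\sigma)$ is induced by the weight-preserving linear substitution $Y_i \mapsto Y_i(A\sigma)$ on the homogeneous coordinates of $\mathbb{P}(1^{5},2)$. Since every $\sigma_k$ in the list is an involution ($\sigma_k^2 = e$), the induced automorphism $\psi(\sigma_k)$ is its own inverse on $\mathcal{M}_{6}$, so the image $\psi(\sigma_k)\bigl(\{Y_0=0\}\bigr)$ is exactly the zero locus of the pullback $Y_0 \circ \psi(\sigma_k)$, which as a function on $\mathcal{M}_{6}$ equals $Y_0(A\sigma_k)$. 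Hence it suffices to prove that for each $k = 1, \dots, 4$ there is a nonzero constant $c_k$ with $Y_0(A\sigma_k) = c_k\, Y_k(A)$; the case $k=0$ with $\sigma_0 = e$ is trivial.

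First I would write the $Y_i(A)$ explicitly in terms of the $3\times 3$ minors (brackets) $D(i_1,i_2,i_3)$ of $A$, using the formula (\ref{eq:Ys-appendix}) recorded in Appendix \ref{sec:Appendix-birat-M}. A column permutation acts on brackets by relabeling, $D(i_1,i_2,i_3)(A\sigma) = D(\sigma(i_1),\sigma(i_2),\sigma(i_3))(A)$ up to the sign incurred when the indices are reordered increasingly. Substituting the transposition $\sigma_k \in \{(34),(35),(24),(25)\}$ into the bracket expression for $Y_0$ and simplifying then collapses $Y_0(A\sigma_k)$ to a scalar multiple of $Y_k(A)$. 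The point is that the weight-one coordinates $Y_0,\dots,Y_4$ are built as a partial $S_6$-orbit of one bracket monomial, and these four transpositions are chosen precisely to carry $Y_0$ to $Y_1,Y_2,Y_3,Y_4$ in turn; Lemma \ref{lem:linear-action-Yi} already guarantees that $Y_0(A\sigma_k)$, being of weight one, lies in the span of $Y_0,\dots,Y_4$ alone (never involving the weight-two coordinate $Y_5$), which keeps the bookkeeping short.

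Having established $Y_0(A\sigma_k) = c_k Y_k(A)$ with $c_k \neq 0$, the reduction above immediately gives $\psi(\sigma_k)\bigl(\{Y_0 = 0\}\bigr) = \{c_k Y_k = 0\} = \{Y_k = 0\}$, as claimed. I expect the only real work to be the bracket substitution and the verification that each constant $c_k$ is nonzero; this is routine given the explicit forms in the Appendix and can, if desired, be confirmed by a short symbolic computation. The one subtlety to watch is the sign convention for reordering bracket indices under $\sigma_k$, but since we only need the resulting coefficient to be nonzero, the precise value of $c_k$ is immaterial for the conclusion.
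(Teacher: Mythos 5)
Your proposal is correct and follows essentially the same route as the paper: invoke Lemma \ref{lem:linear-action-Yi} and then compute the semi-invariants (\ref{eq:Ys-appendix}) under the given transpositions, which indeed yields $Y_{0}(A\sigma_{k})=\pm Y_{k}(A)$ (e.g. $[1\,2\,3][4\,5\,6]\mapsto[1\,2\,4][3\,5\,6]$ under $(34)$), so the zero loci transform as claimed. The extra remarks about involutivity and the sign bookkeeping are harmless and do not change the argument.
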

\begin{proof}
By lemma \ref{lem:linear-action-Yi}, $Y(A\sigma_{k})$ is linear
in $Y_{i}$'s. We derive the claimed results by calculating the semi-invariants
given in (\ref{eq:Ys-appendix}) under the permutations.\end{proof}
\begin{prop}
\label{prop:Covering-M6}The moduli space $\mathbb{\mathcal{M}}_{6}$
is covered by copies of $\mathcal{M}_{3,3}\setminus D_{0}$. More
precisely, we have 
\[
\mathcal{M}_{6}=\bigcup_{k=0}^{4}\phi_{\sigma_{k}}\left(\mathcal{M}_{3,3}\setminus D_{0}\right).
\]
\end{prop}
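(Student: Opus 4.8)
The plan is to first pin down the image $\phi(\mathcal{M}_{3,3}\setminus D_0)$ inside $\mathcal{M}_6$, and then to spread it around by the $S_6$-automorphisms $\psi(\sigma_k)$ until they fill up all of $\mathcal{M}_6$ except for a single point, which Lemma \ref{lem:missing-point} rules out. Since the reverse inclusion $\bigcup_k\phi_{\sigma_k}(\mathcal{M}_{3,3}\setminus D_0)\subseteq\mathcal{M}_6$ is automatic (each $\phi_{\sigma_k}$ lands in $\mathcal{M}_6$), the whole content is the containment $\supseteq$.

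The first and main step is the identification of the basic chart: I would show
\[
\phi\bigl(\mathcal{M}_{3,3}\setminus D_0\bigr)\ \supseteq\ \mathcal{M}_6\cap\{Y_0\neq 0\}.
\]
By the preceding lemma, $\phi^{-1}$ is a genuine morphism on $\mathcal{M}_6\setminus\{Y_0=0\}$. Given $q\in\mathcal{M}_6$ with $Y_0(q)\neq 0$, set $p:=\phi^{-1}(q)\in\mathcal{M}_{3,3}$. The only thing to verify is that $p\notin D_0$: granting this, $p$ lies in the locus where the preceding proposition makes $\phi$ a one-to-one inverse of $\phi^{-1}$ (note $[1,\dots,1]\in D_0$, so $\phi$ is regular at $p$), whence $\phi(p)=q$ and therefore $q\in\phi(\mathcal{M}_{3,3}\setminus D_0)$. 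The claim $p\notin D_0$ is equivalent to $\phi^{-1}\bigl(\mathcal{M}_6\cap\{Y_0\neq0\}\bigr)\subseteq\mathcal{M}_{3,3}\setminus D_0$, i.e.\ to the divisor-matching statement $\phi(D_0)\subseteq\{Y_0=0\}$. I would establish this directly from the explicit birational formulas (\ref{eq:YbyX-appndix}) and (\ref{eq:XbyY-appendix}): substitute $X_i=X_i(Y)$ into the defining linear form $X_0+X_1+X_2-X_3-X_4-X_5$ of $D_0$ and check that on $\mathcal{M}_6$ the result vanishes only where $Y_0=0$. This divisor-matching is the step carrying the real content; everything else is formal. (One does not need the full equality of the two sides here, only this containment.)

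With the basic chart in hand, the remaining step is bookkeeping with the symmetry. Since $\phi_{\sigma_k}=\psi(\sigma_k)\circ\phi$ by Definition \ref{def:phi-sigma}, and $\psi(\sigma_k)$ is an automorphism of $\mathcal{M}_6$ carrying $\{Y_0=0\}$ to $\{Y_k=0\}$ by Lemma \ref{lem:five-sigma}, it carries $\{Y_0\neq0\}$ onto $\{Y_k\neq0\}$, so
\[
\phi_{\sigma_k}\bigl(\mathcal{M}_{3,3}\setminus D_0\bigr)=\psi(\sigma_k)\bigl(\phi(\mathcal{M}_{3,3}\setminus D_0)\bigr)\ \supseteq\ \mathcal{M}_6\cap\{Y_k\neq0\}
\]
for $k=0,1,2,3,4$. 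Taking the union over $k$ gives
\[
\bigcup_{k=0}^{4}\phi_{\sigma_k}\bigl(\mathcal{M}_{3,3}\setminus D_0\bigr)\ \supseteq\ \mathcal{M}_6\cap\bigcup_{k=0}^{4}\{Y_k\neq0\}=\mathcal{M}_6\setminus\{Y_0=\cdots=Y_4=0\}.
\]
In $\mathbb{P}(1^5,2)$ the locus $\{Y_0=\cdots=Y_4=0\}$ is the single point $[0,0,0,0,0,1]$, which lies off $\mathcal{M}_6$ by Lemma \ref{lem:missing-point}; hence the union is all of $\mathcal{M}_6$, and combined with the trivial reverse inclusion we obtain the asserted equality.

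The hard part will be the computational crux in the middle paragraph, namely confirming $\phi(D_0)\subseteq\{Y_0=0\}$ — equivalently that the proper transform of $D_0$ sits inside the hyperplane where $\phi^{-1}$ degenerates. Everything downstream is a formal consequence of this divisor identification together with the $S_6$-symmetry encoded in Lemma \ref{lem:five-sigma} and the omission of the apex point $[0,0,0,0,0,1]$ recorded in Lemma \ref{lem:missing-point}.
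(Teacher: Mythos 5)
Your argument is correct and is essentially the paper's own proof: cover $\mathcal{M}_6$ by the loci $\{Y_k\neq 0\}$ for $k=0,\dots,4$ using Lemma \ref{lem:missing-point}, identify $\mathcal{M}_6\cap\{Y_0\neq0\}$ with the image of $\mathcal{M}_{3,3}\setminus D_0$ via the $1$-to-$1$ statement (\ref{eq:1to1-M33toM6}), and transport by $\psi(\sigma_k)$ using Lemma \ref{lem:five-sigma}. The ``computational crux'' you flag is in fact immediate from the first line of (\ref{eq:YbyX-appndix}), which says $Y_0\circ\phi=X_0+X_1+X_2-X_3-X_4-X_5$ (equivalently, substituting (\ref{eq:XbyY-appendix}) into that linear form returns exactly $Y_0$), so $D_0$ is precisely the pullback of $\{Y_0=0\}$ and your divisor-matching step holds.
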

\begin{proof}
By Lemma \ref{lem:missing-point}, one of the homogeneous coordinate
$Y_{0},...,Y_{4}$ does not vanish for any point of $\mathcal{M}_{6}$.
Then, due to Lemma \ref{lem:five-sigma}, any point is contained in
the union of the isomorphic images $\phi_{\sigma_{k}}\left(\mathcal{M}_{3,3}\setminus D_{0}\right)$
of $\mathcal{M}_{3,3}\setminus D_{0}$ (see (\ref{eq:1to1-M33toM6})
and Definition \ref{def:phi-sigma}).
\end{proof}
The local structures near each of the 15 singular points in $\mathcal{M}_{6}$
is isomorphic to the local structure of $\mathcal{M}_{3,3}^{Loc}$.
Making the resolution $\widetilde{\mathcal{X}}\to\mathcal{X}=\mathcal{M}_{3,3}^{Loc}$
given in Proposition \ref{prop:BlowUp-at-p1p2} at each singular point,
we have the resolution $\widetilde{\mathcal{M}}_{6}\to\mathcal{M}_{6}$.
Namely, let $f_{1}\colon\mathcal{M}'_{6}\to\mathcal{M}_{6}$ be the
blow-up along $\mathrm{Sing}\,\mathcal{M}_{6}$, which is the union
of $15$ lines. Then, $\mathcal{M}'_{6}$ has $2\times15$ singular
points. Let $f_{2}\colon\widetilde{\mathcal{M}}_{6}\to\mathcal{M}'_{6}$
be the blow-up at all the singular points.

Recall that locally we have another resolution $\widetilde{\mathcal{X}}^{+}$.
In the following theorem, we can globalize this to another resolution
of $\mathcal{M}_{6}$ connected with $\widetilde{\mathcal{M}}_{6}$
by a $4$-dimensional flip.
\begin{thm}
\label{thm:Resolutions} There exists another resolution $\widetilde{\mathcal{M}}_{6}^{+}$
of $\mathcal{M}_{6}$ which is connected with $\widetilde{\mathcal{M}}_{6}$
by a $4$-dimensional flip.\end{thm}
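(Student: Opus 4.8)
The plan is to carry out the analytic flip of Proposition~\ref{prop:flip-ell-P2} simultaneously at all fifteen singular points of $\mathcal{M}_{6}$, and to upgrade it to an algebraic modification by realizing its contraction as a projective morphism. First note that the fifteen singular points of $\mathcal{M}_{6}$ are distinct; by Proposition~\ref{prop:Covering-M6} each has a neighborhood isomorphic to $\mathcal{X}=\mathcal{M}_{3,3}^{Loc}$ on which $\widetilde{\mathcal{M}}_{6}\to\mathcal{M}_{6}$ restricts to $\widetilde{\mathcal{X}}\to\mathcal{X}$ of Proposition~\ref{prop:BlowUp-at-p1p2}. Hence $\widetilde{\mathcal{M}}_{6}$ contains fifteen pairwise disjoint lines $\widetilde{\ell}_{1},\dots,\widetilde{\ell}_{15}$, the copies of $\widetilde{\ell}$, each with normal bundle $\mathcal{O}_{\mathbb{P}^{1}}(-1)^{\oplus3}$. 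Since $\widetilde{\mathcal{M}}_{6}\to\mathcal{M}_{6}$ is an isomorphism outside the exceptional loci over these isolated points, it suffices to replace each $\widetilde{\ell}_{i}$ by a $\mathbb{P}^{2}$ through one global birational map.

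For the construction I would blow up $\widetilde{\mathcal{M}}_{6}$ along $\widetilde{\ell}_{1}\cup\cdots\cup\widetilde{\ell}_{15}$, obtaining a smooth variety $W$ with a projective morphism $g\colon W\to\widetilde{\mathcal{M}}_{6}$ whose exceptional divisor is a disjoint union of fifteen copies of $\mathbb{P}(\mathcal{O}(-1)^{\oplus3})\simeq\mathbb{P}^{1}\times\mathbb{P}^{2}$, exactly as in Proposition~\ref{prop:flip-ell-P2}. Let $f$ denote a fiber of the \emph{other} ruling $\mathbb{P}^{1}\times\mathbb{P}^{2}\to\mathbb{P}^{2}$, so that $g$ maps $f$ isomorphically onto $\widetilde{\ell}_{i}$. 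Writing $E$ for the exceptional divisor, the codimension-three blow-up formula $K_{W}=g^{*}K_{\widetilde{\mathcal{M}}_{6}}+2E$, together with $K_{\widetilde{\mathcal{M}}_{6}}\cdot\widetilde{\ell}_{i}=1$ (from adjunction and $N_{\widetilde{\ell}/\widetilde{\mathcal{X}}}\simeq\mathcal{O}_{\mathbb{P}^{1}}(-1)^{\oplus3}$) and $E\cdot f=-1$, gives $K_{W}\cdot f=1-2=-1$. Thus each class $[f]$ spans a $K_{W}$-negative ray, and since these rays are supported over the disjoint components $E_{i}$ they can be contracted simultaneously; I would isolate as Claim~\ref{cla:flip} that the contraction theorem yields a projective morphism $g^{+}\colon W\to\widetilde{\mathcal{M}}_{6}^{+}$ collapsing precisely these $\mathbb{P}^{1}$-rulings, hence each $E_{i}\to\mathbb{P}^{2}$. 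The target $\widetilde{\mathcal{M}}_{6}^{+}$ is proper over $\mathcal{M}_{6}$, and locally around each inserted $\mathbb{P}^{2}$ it is the smooth toric chart $\widetilde{\mathcal{X}}^{+}$ of Proposition~\ref{prop:resolution-MsecP2} (the second subdivision of $C_{NE}^{\vee}$ in Lemma~\ref{lem:C_NE-Resolutions}), so it is smooth; thus $\widetilde{\mathcal{M}}_{6}^{+}\to\mathcal{M}_{6}$ is a resolution.

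Finally I would check that $\widetilde{\mathcal{M}}_{6}\dashrightarrow\widetilde{\mathcal{M}}_{6}^{+}$ is a flip rather than a flop. Both modify loci of codimension $\geq 2$, so the map is an isomorphism in codimension one; contracting the $\widetilde{\ell}_{i}$ (equivalently the inserted $\mathbb{P}^{2}$'s) produces a common variety $\overline{\mathcal{M}}_{6}$, locally the affine cone over the Segre $\mathbb{P}^{1}\times\mathbb{P}^{2}$ of Proposition~\ref{prop:SecP}, over which both $\widetilde{\mathcal{M}}_{6}$ and $\widetilde{\mathcal{M}}_{6}^{+}$ are small. By adjunction $K_{\widetilde{\mathcal{M}}_{6}}\cdot\widetilde{\ell}_{i}=+1$, while the inserted $\mathbb{P}^{2}$ has normal bundle $\mathcal{O}_{\mathbb{P}^{2}}(-1)^{\oplus2}$ and a line $h\subset\mathbb{P}^{2}$ satisfies $K_{\widetilde{\mathcal{M}}_{6}^{+}}\cdot h=-1$. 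The canonical class therefore has opposite, nonzero signs on the two small contracted loci, which is the signature of the four-dimensional flip drawn in Fig.~1. The main obstacle is precisely Claim~\ref{cla:flip}: passing from the analytic contraction of Proposition~\ref{prop:flip-ell-P2} to a projective one and confirming that its output is the smooth variety $\widetilde{\mathcal{M}}_{6}^{+}$. Although $W$ is smooth and the extremal ray is $K_{W}$-negative, verifying that the contraction exists, is of the expected divisorial type, and yields a projective (not merely analytic) resolution is the delicate point where the minimal model program machinery---and the advice of O.~Fujino---enters.
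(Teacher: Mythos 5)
Your numerical groundwork is correct: $K_{\widetilde{\mathcal{M}}_6}\cdot\widetilde{\ell}_i=+1$ by adjunction from $N_{\widetilde{\ell}/\widetilde{\mathcal{X}}}\simeq\mathcal{O}_{\mathbb{P}^1}(-1)^{\oplus3}$, the blow-up formula $K_W=g^*K_{\widetilde{\mathcal{M}}_6}+2E$, $E\cdot f=-1$, hence $K_W\cdot f=-1$, and the sign analysis ($+1$ on the contracted curves on one side, $-1$ on the other) that identifies the modification as a flip rather than a flop all check out. Your route --- blow up the fifteen lines and contract the exceptional $\mathbb{P}^1\times\mathbb{P}^2$'s along the other ruling --- is also genuinely different from the paper's, which never passes through the intermediate variety $W$ but instead runs a relative log MMP directly on $\widetilde{\mathcal{M}}_6$.

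The gap sits exactly at the step you flag as delicate, and it is not a formality. From $K_W\cdot f=-1$ alone you cannot conclude that $[f]$ spans a contractible extremal ray of $\overline{NE}(W/\mathcal{M}_6)$, nor that the fifteen such rays together span a $K_W$-negative extremal face; the contraction theorem requires a supporting relatively nef and big divisor whose null locus is exactly the ruling curves, and you never produce one. The same issue already infects your final paragraph, where the small contraction $\widetilde{\mathcal{M}}_6\to\overline{\mathcal{M}}_6$ of the $\widetilde{\ell}_i$ is taken for granted: analytically one can always contract $(-1,-1,-1)$-curves, but the existence of $\overline{\mathcal{M}}_6$ as a projective variety is itself one of the assertions to be proved (it is item (2) of the paper's Claim \ref{cla:flip}). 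The paper supplies precisely the missing ingredient: it exhibits the explicit boundary $\tfrac13\widetilde{E}$ (with $\widetilde{E}$ the strict transform of the $f_1$-exceptional divisor), computes $-(K_{\widetilde{\mathcal{M}}_6}+\tfrac13\widetilde{E})\equiv_{\mathcal{M}_6}-(\tfrac43\widetilde{E}+2F)$, checks that this is $f$-nef and numerically $f$-trivial exactly on $\widetilde{\ell}_1,\dots,\widetilde{\ell}_{15}$, obtains $\rho\colon\widetilde{\mathcal{M}}_6\to\overline{\mathcal{M}}_6$ from Kawamata--Shokurov base point freeness, perturbs by $\tfrac1k G$ with $G\in|m\rho^*B-A|$ so that $\rho$ becomes a log flipping contraction of a klt pair, and then invokes BCHM for the existence of the flip together with local uniqueness to match it with the local model of Proposition \ref{prop:flip-ell-P2}. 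To complete your version you would need an analogous explicit supporting nef divisor on $W$ (or on $\widetilde{\mathcal{M}}_6$); producing it is the actual content of the proof.
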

\begin{proof}
We have already constructed the flip $\widetilde{\mathcal{X}}\dashrightarrow\widetilde{\mathcal{X}}^{+}$
of $\widetilde{\ell}$ locally analytically in Proposition \ref{prop:flip-ell-P2}.
Let $\widetilde{\ell}_{1},\dots,\widetilde{\ell}_{15}\subset\widetilde{\mathcal{M}}_{6}$
be the copies of $\mP^{1}$ over the fifteen singular points of $\mathcal{M}_{6}$.
The remaining problem is to construct the flips of $\widetilde{\ell}_{1},\dots,\widetilde{\ell}_{15}$
algebraically and globally. The following properties guarantee this.
We will prove them in Appendix \ref{sec:Appendix-Resolution}.

Let $E$ be the $f_{1}$-exceptional divisor and $\widetilde{E}$
its strict transform on $\widetilde{\mathcal{M}}_{6}$. Set $f:=f_{1}\circ f_{2}$.
Then 

\begin{enumerate}

\item[(1)] $-(K_{\widetilde{\mathcal{M}}_{6}}+1/3\widetilde{E})$
is $f$-nef, and is numerically $f$-trivial only for $\widetilde{\ell}_{1},\dots,\widetilde{\ell}_{15}$.

\item[(2)] There exists a small contraction $\rho\colon\widetilde{\mathcal{M}}_{6}\to\overline{\mathcal{M}}_{6}$
over $\mathcal{M}_{6}$ contracting exactly $\widetilde{\ell}_{1}\cup\cdots\cup\widetilde{\ell}_{15}$.

\item[(3)] The contraction $\rho$ is a log flipping contraction
with respect to some klt pair $(\widetilde{\mathcal{M}}_{6},D)$.

\item[(4)] The flip $\widetilde{\mathcal{M}}_{6}\dashrightarrow\widetilde{\mathcal{M}}_{6}^{+}$
of $\rho$ exists and it coincides locally with the flip constructed
as in Proposition \ref{prop:flip-ell-P2}.

\end{enumerate}

This completes the construction of the resolution. \end{proof}
\begin{rem}
\label{rem:Lambda-M6-tM6} By Theorem \ref{thm:Resolutions}, we have
two algebraic resolutions $\widetilde{\mathcal{M}}_{6}\to\mathcal{M}_{6}$
and $\widetilde{\mathcal{M}}_{6}^{+}\to\mathcal{M}_{6}$, which are
related by a four dimensional flip. Interestingly, it will turn out
in \cite{HLTYpartII} that these two possibilities of algebraic resolutions
result in two non-isomorphic definitions of the lambda functions $\lambda_{K3}$
on $\mathcal{M}_{6}$.

~

~
\end{rem}

\section{\textbf{\textup{\label{sec:HyperG-D-Grassmann}Hypergeometric $\mathcal{D}$-modules
on Grassmannians}}}

In this section, we combine the results of earlier sections to give
our main results of this paper. 

We have obtained a global picture for the moduli space $\mathcal{M}_{6}$
in terms of the toric variety $\mathcal{M}_{3,3}$ which is closely
related to the toric variety $\mathcal{M}_{SecP}$. With these results
in hand, we now look at the hypergeometric system $E(3,6)$ defined
on its parameter space $\mathcal{M}_{6}$. To have a global picture,
it is better think of $E(3,6)$ as the corresponding $\mathcal{D}$-module
on $\mathcal{M}_{6}$. In this language, our first result is 
\begin{thm}
\label{thm:Thm1}On each of the open set $\phi_{\sigma_{k}}\left(\mathcal{M}_{3,3}\setminus D_{0}\right)$
$(k=0,1,..,4)$ of $\mathcal{M}_{6}$, the hypergeometric $\mathcal{D}$-module
of $E(3,6)$ restricts to the $\mathcal{D}$-module of the GKZ system
in Proposition \ref{prop:GKZ-sys-def}.\end{thm}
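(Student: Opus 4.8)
The plan is to make the partial gauge fixing \eqref{eq:E3-gaugeA} explicit and to translate the defining operators of $E(3,6)$ in \eqref{eq:GelfandEq} one by one into the operators of the GKZ system, thereby identifying the two $\mathcal{D}$-modules on the chart. First I would reduce to the case $k=0$. Since $\psi(\sigma_k)\colon\mathcal{M}_6\simeq\mathcal{M}_6$ only permutes the six ordered lines, it preserves the $E(3,6)$ $\mathcal{D}$-module and carries $\phi_{\sigma_k}(\mathcal{M}_{3,3}\setminus D_0)$ onto $\phi(\mathcal{M}_{3,3}\setminus D_0)$ (Lemma \ref{lem:five-sigma}); hence it suffices to treat $\phi=\phi_{\sigma_0}$, where $\phi$ is an isomorphism onto its image by \eqref{eq:1to1-M33toM6} and the $T$-invariant coordinates $x,y,z,u,v$ of \eqref{eq:vars-xyzuv} are available.

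Next I would set up the gauge slice. On $\phi(\mathcal{M}_{3,3}\setminus D_0)$ the leading $3\times3$ minor of $A$ is nonzero, so the left $GL(3,\mathbb{C})$-action brings $A$ to the form $(E_3\,\bm a\,\bm b\,\bm c)$, and the integrand of the period becomes \eqref{eq:1-over-f1f2f3}, whose Laurent exponents form exactly the set $\mathcal{A}$ of \eqref{eq:defA} (Proposition \ref{prop:GKZ-sys-def}). The $E(3,6)$ $\mathcal{D}$-module is $GL(3,\mathbb{C})\times(\mathbb{C}^*)^6$-equivariant through the first-order operators (i) and (ii) of \eqref{eq:GelfandEq}, and the gauge slice $M_{3,6}^{E_3}$ is transverse to the $GL(3,\mathbb{C})$-orbits. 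Restricting the equivariant module to this slice and to the residual torus $T\simeq(\mathbb{C}^*)^5$ with weights \eqref{eq:tableWeights} recovers the module descended to $\mathbb{A}^9/T$, which is the domain of the GKZ system; this descent is the content of the reduction of an Aomoto--Gel'fand system to a GKZ system under partial gauge fixing \cite[Sect.3.3.4]{Ao-Kita}.

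The heart of the proof is the operator dictionary. Writing $\theta_{a_i}=a_i\partial/\partial a_i$, the column-homogeneity relations (i) for the three nontrivial columns $\bm a,\bm b,\bm c$ become $\theta_{a_0}+\theta_{a_1}+\theta_{a_2}$, $\theta_{b_0}+\theta_{b_1}+\theta_{b_2}$ and $\theta_{c_0}+\theta_{c_1}+\theta_{c_2}$, each set to $-\tfrac12$; these are the first three rows of \eqref{eq:tableWeights}, i.e.\ the GKZ Euler operators with $\beta_1=\beta_2=\beta_3=-\tfrac12$. The off-diagonal $GL(3,\mathbb{C})$-relations (ii) with $i\neq k$ are precisely the infinitesimal directions annihilated by the gauge fixing (they define the slice rather than acting on it), while their diagonal part, combined with (i), yields the remaining two Euler operators (rows four and five of \eqref{eq:tableWeights}) with $\beta_4=\beta_5=0$. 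Finally the second-order relations (iii) restrict on the slice to box operators $\square_\ell=(\partial/\partial a)^{\ell_+}-(\partial/\partial a)^{\ell_-}$; a direct check (e.g.\ $\partial_{a_2}\partial_{b_2}-\partial_{a_0}\partial_{b_1}$ corresponds to $\ell=e_{a_2}+e_{b_2}-e_{a_0}-e_{b_1}\in L$) shows each resulting $\ell$ lies in $L=\ker\varphi_{\mathcal{A}}$, and conversely that $L$ is spanned by the lattice vectors arising from these Plücker-type relations, so the restricted (iii) generate exactly the GKZ box ideal.

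With both containments of generators established, the restricted left $\mathcal{D}$-ideal of $E(3,6)$ and the GKZ ideal coincide on the chart, which gives the asserted isomorphism of $\mathcal{D}$-modules. I expect the main obstacle to be not the matching of the listed generators but the descent step: one must verify that restriction to the transverse slice $M_{3,6}^{E_3}$ is clean — that the restricted module is generated by the restrictions of (i)--(iii) and loses no relation — and that the Plücker vectors genuinely generate the full sublattice $L$, so that no box operator is missed. The first point is where the equivariance of the module and \cite[Sect.3.3.4]{Ao-Kita} are essential; the second is a finite lattice computation. As a consistency check one may note that both systems are regular holonomic with the period integrals $\bar{\omega}_C(a)$ as a common full space of solutions of rank $6=\operatorname{rk}T_X$, which together with the generator containment forces the two modules to agree on the chart.
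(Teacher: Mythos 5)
Your proposal is correct and follows essentially the same route as the paper: the paper's proof is precisely ``reduce to one chart via the covering of Proposition \ref{prop:Covering-M6}, then identify $E(3,6)$ on the gauge slice $(E_{3}\,\bm{a}\,\bm{b}\,\bm{c})$ with the GKZ system of Proposition \ref{prop:GKZ-sys-def}.'' The only difference is that you spell out the operator-by-operator dictionary (Euler operators from (i)--(ii), box operators from (iii), and the check that the resulting vectors span $L$), which the paper delegates to the citation of \cite[Sect.3.3.4]{Ao-Kita} and to the torus-invariance argument in the proof of Proposition \ref{prop:GKZ-sys-def}.
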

\begin{proof}
This follows by combining the results in Sections \ref{sec:GKZ-from-E36}
and \ref{sec:M33-from-Periodint} with Proposition \ref{prop:Covering-M6}.
\end{proof}
The GKZ $\mathcal{A}$-hypergeometric system has the natural compactification
$\mathcal{M}_{SecP}$ in terms of the secondary fan. As we saw in
Proposition \ref{prop:LCSL-GKZ}, the special boundary points (LCSLs)
arise in the resolutions of $\mathcal{M}_{SecP}$. By Propositions
\ref{prop:Toric-Res-MsecP} and \ref{prop:resolution-MsecP2}, the
resolutions of $\mathcal{M}_{SecP}$ are in fact the resolutions of
$\mathcal{M}_{3,3}$, and are given by the resolutions of the local
singularity $\widetilde{\mathcal{X}}\to\mathcal{M}_{3,3}^{Loc}$.
We have transformed these local structures to $\mathcal{M}_{6}$ by
the isomorphisms $\phi_{k}:\mathcal{M}_{3,3}\setminus D_{0}\simeq\phi_{\sigma_{k}}\left(\mathcal{M}_{3,3}\setminus D_{0}\right)$,
and obtained the desired resolutions of $\mathcal{M}_{6}$. Among
the resolutions, in particular, we have constructed two algebraic
resolutions $\widetilde{\mathcal{M}}_{6}$ and $\widetilde{\mathcal{M}}_{6}^{+}$.
Our second result is about the LCSLs in these resolutions.
\begin{thm}
\label{thm:Thm2}In the above resolutions of $\mathcal{M}_{6}$, the
LCSLs are given by the intersections of normal crossing divisors,
which are given by isomorphic images under $\phi_{\sigma_{k}}\,(k=0,1,...,4)$
of the divisors of the blow-ups $\widetilde{\mathcal{X}}\to\mathcal{X}=\mathcal{M}_{3,3}^{Loc}$
or their flips $\widetilde{\mathcal{X}}^{+}\to\mathcal{X}$.\end{thm}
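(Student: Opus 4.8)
The plan is to assemble the theorem from the intrinsic characterization of an LCSL together with the local identifications already established, so that essentially no new computation is required. First I would recall that, by Proposition \ref{prop:LCSL-GKZ} and the parallel statements for all the origins $o_i^{(k)}$, an LCSL of the GKZ system is characterized \emph{intrinsically} at the level of its $\mathcal{D}$-module: it is a point of the resolution near which the solution space has a unique holomorphic power-series solution $\omega_0$, while every other linearly independent solution acquires a logarithm in the local normal-crossing coordinates $z_1,\dots,z_4$. Since this is a property of the $\mathcal{D}$-module together with the normal-crossing coordinates alone, it is preserved under any isomorphism of $\mathcal{D}$-modules that respects the boundary structure.

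Next I would transport this structure to $\mathcal{M}_6$. By Theorem \ref{thm:Thm1}, on each chart $\phi_{\sigma_k}(\mathcal{M}_{3,3}\setminus D_0)$ the $E(3,6)$ $\mathcal{D}$-module is identified with the GKZ $\mathcal{D}$-module on $\mathcal{M}_{3,3}\setminus D_0$, and by Proposition \ref{prop:Covering-M6} these charts cover $\mathcal{M}_6$. Hence the intrinsic LCSL points of $E(3,6)$ on each chart are exactly the $\phi_{\sigma_k}$-images of the LCSLs of the GKZ system lying over the singular points of $\mathcal{M}_{3,3}^{Loc}$. I would then locate these geometrically: by Propositions \ref{prop:BlowUp-along-xyz} and \ref{prop:BlowUp-at-p1p2} the resolution $\widetilde{\mathcal{X}}\to\mathcal{X}=\mathcal{M}_{3,3}^{Loc}$ has simple-normal-crossing exceptional locus $\widetilde{E}_x\cup\widetilde{E}_y\cup\widetilde{E}_z\cup D_{p_1}\cup D_{p_2}$, and the LCSLs $o_k^{(1)}$ are precisely the intersection points $\widetilde{E}_x\cap\widetilde{E}_y\cap\widetilde{E}_z\cap D_{p_k}$ (with the analogous intersections for the flip $\widetilde{\mathcal{X}}^+$), as displayed in Fig.4 and matching the boundary points of Fig.1.

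Since $\widetilde{\mathcal{M}}_6$ (resp. $\widetilde{\mathcal{M}}_6^+$) is built in Theorem \ref{thm:Resolutions} by performing precisely the resolution $\widetilde{\mathcal{X}}\to\mathcal{X}$ (resp. its flip $\widetilde{\mathcal{X}}^+\to\mathcal{X}$) at each of the fifteen singular points, and since $\phi_{\sigma_k}$ restricts to an isomorphism on the relevant neighborhoods by the $1$-to-$1$ property away from $D_0$, the exceptional divisors of $\widetilde{\mathcal{M}}_6\to\mathcal{M}_6$ are the $\phi_{\sigma_k}$-images of $\widetilde{E}_x,\widetilde{E}_y,\widetilde{E}_z,D_{p_1},D_{p_2}$; they meet transversally, and the LCSLs are their common intersection points. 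This is exactly the assertion of the theorem.

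The main obstacle I expect is not any single calculation but the compatibility of the covering: a given LCSL point of $\mathcal{M}_6$ may lie in several of the five charts $\phi_{\sigma_k}(\mathcal{M}_{3,3}\setminus D_0)$, so one must check that the local GKZ descriptions, and hence the identification of the LCSL and of its normal-crossing divisors, agree on the overlaps and are independent of the chart chosen. This is precisely controlled by the combinatorial symmetry noted in Remark \ref{rem:six-choices-II}: the six singular vertices, equivalently the six ways of reading $\mathcal{A}$ off the integrand, yield the same hypergeometric series and Picard--Fuchs operators up to permutation of the variables. Consequently the LCSL data glue consistently across the charts, and the global statement follows.
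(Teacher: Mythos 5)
Your proposal is correct and takes essentially the same route as the paper: the paper's own proof is a two-sentence pointer to Sections 5 and 6 together with Propositions \ref{prop:LCSL-GKZ} and \ref{prop:resolution-MsecP2}, and you have simply spelled out that same chain of references (local GKZ trivialization via Theorem \ref{thm:Thm1}, the covering of Proposition \ref{prop:Covering-M6}, the normal-crossing exceptional divisors of $\widetilde{\mathcal{X}}\to\mathcal{X}$ and $\widetilde{\mathcal{X}}^{+}\to\mathcal{X}$, and the identification of the boundary points with the $o_{i}^{(k)}$). Your additional remark on chart-overlap compatibility, resolved by the combinatorial symmetry of Remark \ref{rem:six-choices-II}, is a sensible point the paper leaves implicit but does not constitute a different approach.
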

\begin{proof}
The claims are shown in Sections \ref{sec:MoreOn-M33} and \ref{sec:BBS}.
By Proposition \ref{prop:LCSL-GKZ} and Proposition \ref{prop:resolution-MsecP2},
the boundary points are in fact the desired LCSLs.
\end{proof}
In a companion paper \cite{HLTYpartII}, we will construct the so-called
mirror maps from the local solutions near each LCSL. The mirror maps
turn out to be generalizations of the classical $\lambda$-function
for the Legendre family of the elliptic curves. We will call these
new examples of mirror maps\textit{ $\lambda_{K3}$-functions.} Then,
Theorem \ref{thm:Thm2} implies that the $\lambda_{K3}$-functions
have nice $q$-expansions (Fourier expansions) at the boundary points
in the suitable resolutions of the Baily-Borel-Satake compactification
of the double cover family of K3 surfaces. As mentioned in Remark
\ref{rem:Lambda-M6-tM6}, it will turn out in \cite{HLTYpartII} that
there are two non-isomorphic definitions of $\lambda_{K3}$-functions
corresponding to the two algebraic resolutions $\widetilde{\mathcal{M}}_{6}$
and $\widetilde{\mathcal{M}}_{6}^{+}$. 

\vskip0.1cm
\begin{rem}
For the double cover family of K3 surfaces, the two basically different
definitions of the moduli space are isomorphic; i.e., one is the GIT
compactification of the configurations of six lines, and the other
is the Baily-Borel-Satake compactification of the lattice polarized
K3 surfaces. Due to this nice property, we can associate geometry
to each point in the moduli space $\mathcal{M}_{6}$. We expect that
a nice geometry of degenerations, e.g. \cite{GS1,GS2}, will arise
from the boundary points which we have constructed in the resolutions
of $\mathcal{M}_{6}$. In particular, it is an interesting problem
to see how the geometry of the geometric mirror symmetry due to Strominger-Yau-Zaslow
\cite{SYZ} (and also \cite{GS1}) appears near these boundary points.
We should note here that the standard mirror symmetry for the lattice
polarized K3 surfaces \cite{DoMs} does not apply to the double cover
family of K3 surfaces because the transcendental lattice contains
$U(2)$ instead of $U$ (cf. \cite{GW}). 

\vskip0.2cm
\end{rem}
Finally, we note that the hypergeometric system $E(3,6)$ is a special
case of Aomoto-Gel'fand systems, which are called hypergeometric system
$E(n,m)$ on Grassmannians $G(n,m)$ (see e.g. \cite{Ao-Kita} and
refereces therein). Our theorems above are based on explicit constructions
for the case of $E(3,6)$, but we expect that they are generalized
in the following form:
\begin{conjecture}
Hypergeometric $\mathcal{D}$-modules of $E(n,m)$ on Grassmannians
have similar coverings by the $\mathcal{D}$-modules of suitable GKZ
systems. Namely, the parameter space of the system $E(n,m)$ has an
open covering by Zariski open subset of toric varieties on which the
system is represented (locally) by a $GKZ$ system. 
\end{conjecture}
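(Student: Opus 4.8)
The plan is to separate the conjecture into its two logically distinct halves and to treat each by generalizing the corresponding step in the $E(3,6)$ argument. The first half --- that on each toric chart the system $E(n,m)$ is literally a GKZ system --- is essentially already available in general: the reduction of the Aomoto-Gel'fand system on $G(n,m)$ to a Gel'fand-Graev/GKZ system after a partial gauge fixing of the $GL(n,\mathbb{C})$-action holds for all $(n,m)$ (see \cite[Sect.3.3.4]{Ao-Kita}), and this is precisely the mechanism behind Proposition \ref{prop:GKZ-sys-def} and Theorem \ref{thm:Thm1}. Thus the real content of the conjecture is the \emph{second} half: producing finitely many such toric charts that together form a Zariski open cover of a compactified parameter space, i.e.\ the analogue of Proposition \ref{prop:Covering-M6}.

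For the local construction I would, for each choice of an $n$-element subset $I\subset\{1,\dots,m\}$, fix the gauge in which the $I$-th $n\times n$ minor of $A\in M_{n,m}^{o}$ is the identity, exactly as in (\ref{eq:E3-gaugeA}). This reduces the left $GL(n,\mathbb{C})$-action to the diagonal torus and leaves a residual torus $T_I\simeq(\mathbb{C}^*)^{m-1}$ acting on $\mathbb{A}^{n(m-n)}$; reading off the weights gives a finite configuration $\mathcal{A}(n,m)$ of $n(m-n)$ integral vectors in $\mathbb{Z}^{m-1}$, the direct analogue of (\ref{eq:defA}). By \cite{Ao-Kita} the normalized period integral satisfies the GKZ $\mathcal{A}(n,m)$-hypergeometric system, with the exponent $\beta$ determined by the $\alpha_i$, and the quotient $\mathbb{A}^{n(m-n)}/T_I$ together with its secondary compactification supplies the toric variety on which $E(n,m)$ is represented as a GKZ $\mathcal{D}$-module.

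The globalization step is where the $S_m$-action enters. As in Lemma \ref{lem:linear-action-Yi} and Definition \ref{def:phi-sigma}, permuting the $m$ points of the configuration induces automorphisms of the GIT compactification $\mathcal{M}_{n,m}$ of $m$ ordered points in $\mathbb{P}^{n-1}$, and composing these with the birational map from the toric chart yields finitely many copies $\phi_{\sigma}(\mathcal{M}_{SecP}(n,m)\setminus D_0)$. One must then isolate a finite list of permutations $\sigma_k$ and prove the exact analogue of Lemma \ref{lem:missing-point} and Proposition \ref{prop:Covering-M6}: that the complementary ``bad'' loci $D_0$ (the indeterminacy divisors of the $\phi_{\sigma_k}$) have empty common intersection on $\mathcal{M}_{n,m}$, so that every stable configuration is regularized by at least one chart and hence lies in a toric open set carrying a GKZ structure.

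The hard part will be this covering, not the local reduction. For $(n,m)=(3,6)$ it rested on two very concrete facts --- that $\mathcal{M}_6$ misses a single coordinate point and that five explicit transpositions suffice (Lemmas \ref{lem:missing-point}, \ref{lem:five-sigma}) --- and neither has an obvious uniform counterpart. What one really wants is an intrinsic argument, presumably via the matroid stratification of configurations of $m$ points in $\mathbb{P}^{n-1}$ and the Gelfand-MacPherson correspondence between $G(n,m)/\!/T$ and the quotient by $GL(n)$, showing that the finitely many torus charts coming from the bases of the underlying matroid exhaust the semistable locus. I expect proving such an exhaustion statement in a way that is uniform in $(n,m)$ to be the main obstacle; by contrast the further refinement to resolutions with normal-crossing LCSL divisors (the analogue of Theorem \ref{thm:Thm2}) is a separate layer, not demanded by the conjecture, and there is no reason to expect the simple local model $\mathcal{M}_{3,3}^{Loc}=\{xyz=uv\}$ and its flip to survive for general $(n,m)$.
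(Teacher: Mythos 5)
The statement you are addressing is stated in the paper as a \emph{conjecture}: the authors give no proof of it, only the fully worked-out case $(n,m)=(3,6)$ in Theorems \ref{thm:Thm1} and \ref{thm:Thm2}, together with the remark that already for $E(4,8)$ the GIT parameter space and its toric covering ``become much more complicated.'' Your proposal is therefore to be judged as a candidate proof of an open statement, and as such it has a genuine gap --- one you yourself identify. The local half of your plan is sound and consistent with the paper's own setup: fixing the $I$-th minor to $E_n$ leaves a residual torus $(\mathbb{C}^*)^{m}/\mathbb{C}^*\simeq(\mathbb{C}^*)^{m-1}$ acting on $\mathbb{A}^{n(m-n)}$ (matching the $(\mathbb{C}^*)^5$ on $\mathbb{A}^9$ of (\ref{eq:tableWeights})), and the reduction of the Aomoto--Gel'fand system to a GKZ system in such a gauge is indeed asserted in general in \cite[Sect.3.3.4]{Ao-Kita} and is the mechanism behind Proposition \ref{prop:GKZ-sys-def}. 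But the conjecture's actual content is the global covering statement, and there your argument stops at ``one must then isolate a finite list of permutations $\sigma_k$ and prove the exact analogue of Lemma \ref{lem:missing-point} and Proposition \ref{prop:Covering-M6}.'' For $(3,6)$ that covering rests on two computations with the explicit semi-invariants (\ref{eq:Ys-appendix}) and the explicit birational map (\ref{eq:YbyX-appndix})--(\ref{eq:XbyY-appendix}); no uniform replacement is supplied, and the appeal to matroid strata and the Gel'fand--MacPherson correspondence is a direction, not an argument. Until that exhaustion statement is proved, nothing has been established beyond what the paper already records for $E(3,6)$.

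A secondary point worth flagging: even the local half requires more than the GKZ reduction of the differential system. The conjecture speaks of the parameter space being covered by Zariski open subsets \emph{of toric varieties}, so for each gauge $I$ you must also produce the analogue of the toric variety $\mathcal{M}_{3,3}$ (or $\mathcal{M}_{SecP}$) and the analogue of the restriction (\ref{eq:1to1-M33toM6}) of the birational map to an open immersion into the GIT compactification $\mathcal{M}_{n,m}$, with an explicit indeterminacy divisor $D_0$. In the paper this is Sections \ref{sec:M33-from-Periodint}--\ref{sec:BBS} and Appendix \ref{sec:Appendix-birat-M}, and it is genuinely geometric work, not a formal consequence of the gauge fixing. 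You are right that the normal-crossing/LCSL refinement of Theorem \ref{thm:Thm2} is a separate layer not demanded by the conjecture, and right to locate the main obstacle in the covering; but locating the obstacle is not the same as overcoming it, so the proposal should be regarded as a plausible program rather than a proof.
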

The cases of $E(n,2n)$ are related to Calabi-Yau varieties which
are given by (suitable resolutions of) the double coverings of $\mathbb{P}^{n-1}$
branched along general $n$-hyperplanes. In particular, the case of
$E(4,8)$ and its related algebraic geometry has been worked in the
literatures \cite{GSvSZuo,ShZuo}. In this case, the GIT quotient
parameter space for $E(4,8)$ and its toric covering by $\mathcal{M}_{SecP}$
for the GKZ system become much more complicated. However, we expect
similar results as in Theorems \ref{thm:Thm1},\ref{thm:Thm2} hold
in general. 

\clearpage{}

\appendix
\renewcommand{\themyparagraph}{{\Alph{section}.\arabic{subsection}.\alph{myparagraph}}}

\section{\textbf{\textup{Six singular vertices of $Sec(\mathcal{A})$ \label{sec:Appendix-SecA}}}}

The secondary polytope $Sec(\mathcal{A})$ is defined for the Gorenstein
cone $Cone(\mathcal{A})$ generated by primitive generators $\mathcal{A}=\left\{ v_{1},v_{2},...,v_{9}\right\} $
given in (\ref{eq:defA}). We first consider all possible (regular)
triangulations $\mathcal{T}$ of the convex hull $Conv(\mathcal{A})$.
Each triangulation $T=\left\{ \sigma\right\} $ consists of simplices
$\sigma$, each of which corresponds to a simplicial cone in $Cone(\mathcal{A})$.
For a triangulation $T=\left\{ \sigma\right\} $, we set 
\[
\psi_{T}=\bigg(\sum_{v_{1}\prec\sigma}vol(\sigma),\sum_{v_{2}\prec\sigma}vol(\sigma),\cdots,\sum_{v_{9}\prec\sigma}vol(\sigma)\bigg)\in\mathbb{Z}^{9}.
\]
Here $vol(\sigma)$ is the volume of $\sigma$ normalized so that
the elementary simplex in $\mathbb{R}^{n}$ is $1$. The secondary
polytope is defined to be the convex hull $Conv(\left\{ \psi_{T}\right\} _{T\in\mathcal{T}})$
in $\mathbb{R}^{9}$. By translating one vertex, say $\psi_{T_{1}}$,
to the origin, this polytope now sits in $L_{\mathbb{R}}$ as introduced
in Subsection (\ref{para:SecPolytope}). There are 108 triangulations
for $Conv(\mathcal{A})$. Of those exactly six triangulations $T_{1},T_{2},...,T_{6}$
correspond to singularities in the compactification $\mathcal{M}_{SecP}=\mathbb{P}_{Sec(\mathcal{A})}$.
Below we list the all six vertices $\psi_{T_{i}}-\psi_{T_{1}}\in L$
for the convex hull;
\[
\begin{matrix}\begin{aligned}\psi_{T_{1}}-\psi_{T_{1}} & =4\,\,(\;\;\;0,\;\;\;0,\;\;\;0,\;0,\;0,\;0,\;0,\;0,\;0),\\
\psi_{T_{2}}-\psi_{T_{1}} & =4\,\,(-1,\;\;\;0,-1,\;0,\;1,\;0,\;0,\;1,\;0),\\
\psi_{T_{3}}-\psi_{T_{1}} & =4\,\,(-1,-1,\;\;\;0,\;1,\;0,\;0,\;1,\;0,\;0),\\
\psi_{T_{4}}-\psi_{T_{1}} & =4\,\,(\;\;\;0,-1,-1,\;0,\;0,\;1,\;0,\;0,\;1),\\
\psi_{T_{5}}-\psi_{T_{1}} & =4\,\,(-1,-1,-1,\;1,\;0,\;1,\;0,\;1,\;0),\\
\psi_{T_{6}}-\psi_{T_{1}} & =4\,\,(-1,-1,-1,\;0,\;1,\;0,\;1,\;0,\;1).
\end{aligned}
\end{matrix}
\]
The factor 4 is irrelevant to define toric variety from the convex
hull. Put 
\[
\mathcal{V}:=\left\{ \frac{1}{4}(\psi_{T_{i}}-\psi_{T_{1}})\mid i=1,...,6\right\} .
\]
Note that the set $\mathcal{V}\setminus\{0\}$ represents exactly
the exponents of $x,y,z,u,v$ in (\ref{eq:vars-xyzuv}). The cone
generated by $\mathcal{V}$ is $C_{0}$ given in (\ref{eq:cone-C0}),
while the cone generated by all 108 vertices is $C_{NE}$ given in
Proposition \ref{prop:SecP}, i.e., 
\[
C_{NE}=\sum_{i=1}^{108}\mathbb{R}_{\geq0}(\psi_{T_{i}}-\psi_{T_{1}}).
\]

~

\section{\textbf{\textup{Four dimensional cones $C_{0}$ and $C_{NE}$ \label{sec:AppendixB}}}}

Let $L=\mathrm{Ker}\left\{ \varphi_{\mathcal{A}}:\mathbb{Z}^{\mathcal{A}}\to\mathbb{Z}^{5}\right\} $
be the lattice defined in (\ref{eq:lattice-L}). Here, for convenience,
we summarize the data of the cones $C_{0},C_{NE}$ and their duals,
which are scattered in the text. We define a projection 
\begin{equation}
\pi_{4}:L\to\mZ^{4},\;\;\ell=(\ell_{1},...,\ell_{3},\ell_{4},...,\ell_{7},...,\ell_{9})\mapsto(\ell_{4},...,\ell_{7}).\label{eq:Appendix-Pi4}
\end{equation}
 It is an easy exercise to verify that $\pi_{4}:L\to\mathbb{Z}^{4}$
is an isomorphism. In this paper we shall often use $\pi_{4}$ to
represent vertices in $L$ as four component vectors for computations. 
\begin{prop}
The cones $C_{0}\subset C_{NE}$ and $C_{NE}$ in $L_{\mathbb{R}}$
are written under the above identification by
\[
\begin{matrix}\begin{aligned} & C_{0}=\mathrm{Cone}\left\{ (0,1,0,0),(1,0,0,1),(0,0,1,0),(1,0,1,0),(0,1,0,1)\right\} ,\\
 & C_{NE}={\rm Cone}\left\{ \begin{matrix}(1,0,0,0),(0,1,-1,1),(1,-1,1,0),(0,0,0,1)\\
(0,0,1,-1),(-1,1,0,0)
\end{matrix}\right\} .
\end{aligned}
\end{matrix}
\]

\end{prop}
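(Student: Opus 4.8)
The plan is to verify both identities by direct substitution, using the fact — recorded immediately before the proposition — that $\pi_4$ restricts to a lattice isomorphism $L\xrightarrow{\sim}\mathbb{Z}^4$. Since a linear isomorphism sends a cone to the cone generated by the images of its generators, it suffices to track what $\pi_4$ does to the listed rays of $C_0$ and $C_{NE}$; by its definition in \eqref{eq:Appendix-Pi4}, $\pi_4$ simply reads off the four entries $(\ell_4,\ell_5,\ell_6,\ell_7)$ of a vector $\ell\in L\subset\mathbb{R}^9$.

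First I would treat $C_0$. Its generators as a cone in $L_{\mathbb{R}}$ are the five vectors displayed in \eqref{eq:cone-C0}; extracting the $4$th through $7$th coordinates of each gives, in order, $(0,1,0,0),(1,0,0,1),(0,0,1,0),(1,0,1,0),(0,1,0,1)$, which is exactly the claimed list. Next, for $C_{NE}$ I would apply the same extraction to the six generators given in Proposition~\ref{prop:SecP}; this produces the vectors $(1,0,0,0),(1,-1,1,0),(0,0,1,-1),(0,1,-1,1),(0,0,0,1),(-1,1,0,0)$, which coincide as a set with the six generators asserted in the proposition (and with the $\mathbb{Z}^4$ description already obtained in \eqref{eq:C_NE-4dim}). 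Finally, the inclusion $C_0\subset C_{NE}$ is the content of Lemma~\ref{lem:C_NE-C_0} and is preserved by the isomorphism $\pi_4$; if desired it can be re-checked directly in $\mathbb{R}^4$ by writing each of the five generators of $C_0$ as a nonnegative rational combination of the six generators of $C_{NE}$.

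The computation is entirely routine, so there is no genuine obstacle. The only point deserving care is the unimodularity of $\pi_4|_L$ — that the four coordinate functionals $\ell\mapsto\ell_j$ $(j=4,5,6,7)$ restrict to a $\mathbb{Z}$-basis of $\mathrm{Hom}(L,\mathbb{Z})$, and not merely a $\mathbb{Q}$-basis — since this is what guarantees that $\pi_4$ carries lattice ray generators to lattice ray generators. This is precisely the isomorphism statement granted just before the proposition, and it can be confirmed by noting that the complementary columns $1,2,3,8,9$ of the $5\times 9$ matrix $\varphi_{\mathcal{A}}$ form a $5\times 5$ submatrix of determinant $\pm 1$, so that the coordinates $\ell_1,\ell_2,\ell_3,\ell_8,\ell_9$ are integrally determined on $L$ by $(\ell_4,\ell_5,\ell_6,\ell_7)$.
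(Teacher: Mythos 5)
Your proposal is correct and matches the paper's (implicit) argument: the paper offers no written proof beyond declaring the identification via $\pi_4$ and leaving the verification as a direct computation, which is exactly the coordinate extraction you carry out, and your extracted vectors agree with both the stated proposition and \eqref{eq:C_NE-4dim}. Your added remark on the unimodularity of the complementary $5\times5$ minor of $\varphi_{\mathcal{A}}$ is a correct and worthwhile justification of the isomorphism $\pi_4|_L$ that the paper merely asserts.
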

It is straightforward to verify the following results from explicit
calculations.
\begin{prop}
The dual cones $C_{0}^{\vee}\supset C_{NE}^{\vee}$ are written by
the following primitive generators;
\[
\begin{matrix}\begin{aligned} & C_{0}^{\vee}=\mathrm{Cone}\left\{ \begin{matrix}(0,1,0,0),(1,0,0,0),(0,0,1,0),(0,0,0,1)\\
(1,1,0,-1),(-1,0,1,1)
\end{matrix}\right\} ,\\
 & C_{NE}^{\vee}=\mathrm{Cone}\left\{ (1,1,0,0),(0,1,1,0),(0,0,1,1),(1,1,1,0),(0,1,1,1)\right\} .
\end{aligned}
\end{matrix}
\]
The dual cone $C_{0}^{\vee}$ is a Gorenstein cone, while $C_{NE}^{\vee}$
is not. 

~
\end{prop}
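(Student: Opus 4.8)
The plan is to carry out everything inside $\mathbb{R}^4$ via the isomorphism $\pi_4:L\to\mathbb{Z}^4$ of (\ref{eq:Appendix-Pi4}), so that $C_0$, $C_{NE}$ and their duals become ordinary rational polyhedral cones in $\mathbb{R}^4$ with the standard pairing. The inclusion $C_0^\vee\supset C_{NE}^\vee$ requires no computation: by Lemma \ref{lem:C_NE-C_0} we have $C_0\subset C_{NE}$, and dualizing reverses inclusions. For the explicit generators I would invoke the standard fact that, for a full-dimensional pointed rational cone $C=\mathrm{Cone}\{v_1,\dots,v_k\}$, one has $C^\vee=\{m\mid\langle m,v_i\rangle\ge0\ \forall i\}$, whose extreme rays are exactly the primitive inner normals of the facets of $C$, and that $(C^\vee)^\vee=C$. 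First I would check that the five generators of $C_0$ (resp. the six of $C_{NE}$) span $\mathbb{R}^4$, so both cones are full-dimensional and the facet description applies.

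Next I would verify the two generator lists by the facet method. For each candidate ray $m$ I would compute its pairings with the generators of the relevant cone and confirm they are all $\ge0$, placing $m$ in the dual; simultaneously I would record which generators $m$ annihilates and check that these span a $3$-dimensional hyperplane, certifying $m$ as a primitive inner facet normal. For $C_0$ each listed ray kills exactly three of the five generators, and a count shows $C_0$ has six facets: its cross-section is a triangular bipyramid ($5$ vertices, $9$ edges, $6$ triangular facets), matching the six listed rays. For $C_{NE}$ the cross-section is a triangular prism ($6$ vertices, $9$ edges, $5$ facets, namely two triangles and three quadrilaterals), whose five facet normals are the listed generators of $C_{NE}^\vee$. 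Since the extreme rays of a dual cone are precisely the facet normals, and $(C^\vee)^\vee=C$ by biduality, each list generates the claimed dual cone.

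Finally, for the Gorenstein statements I would search for a lattice point placing all primitive generators of the dual cone on a common affine hyperplane at height $1$. For $C_0^\vee$ the point $n_0=(1,1,1,1)$ does this: a direct check gives $\langle n_0,m\rangle=1$ for all six listed generators, so $C_0^\vee$ is Gorenstein. For $C_{NE}^\vee$ the analogous linear system $\langle n_0,m\rangle=1$ over the five generators is already inconsistent over $\mathbb{R}$ (imposing the conditions from $(1,1,0,0),(0,1,1,0),(0,0,1,1)$ and $(1,1,1,0)$ forces $n_0=(0,1,0,1)$, but then $(0,1,1,1)$ gives height $2$), so the primitive generators lie on no common affine hyperplane and $C_{NE}^\vee$ cannot be Gorenstein. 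The only genuinely non-mechanical point is completeness of the candidate generating sets, i.e. that they span the full dual cones and not proper subcones; this is exactly what the facet-normal enumeration together with biduality settles, and everything else reduces to a finite collection of pairing computations.
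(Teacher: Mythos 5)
Your proposal is correct and takes the same route as the paper, which simply asserts that the statement "is straightforward to verify from explicit calculations": all of your pairing checks come out as claimed (each listed ray pairs nonnegatively with every generator of the corresponding cone and annihilates a $3$-dimensional subset of them, $(1,1,1,1)$ evaluates to $1$ on all six generators of $C_{0}^{\vee}$, and the linear system for $C_{NE}^{\vee}$ is inconsistent exactly as you compute). The only point to tighten is the completeness of the generator lists --- for the bipyramid the bound $F\leq2V-4$ makes six exhibited facets automatically exhaustive, whereas for the prism-type cross-section of $C_{NE}$ you should close the argument by the biduality check you already mention, i.e.\ verifying that the cone cut out by the five inequalities $\langle\rho_{j},x\rangle\geq0$ is generated by the six listed rays of $C_{NE}$.
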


\section{\textbf{\textup{Picard-Fuchs operators on ${\rm Spec}\,\protect\mC[(\sigma_{2}^{(1)})^{\vee}\cap L]$
\label{sec:Appendix-PF-eqs}}}}

We list the Picard-Fuchs differential operators discussed in Subsection
\ref{sub:GKZ-LCSLs} following the notation there. A complete set
of differential operators $\mathcal{D}_{\ell}$ are given by the following
$\ell$'s:
\[
\begin{matrix}\ell^{(1)},\;\ell^{(2)},\;\ell^{(3)},\;\ell^{(1)}+\ell^{(4)},\;\ell^{(2)}+\ell^{(4)},\;\ell^{(3)}+\ell^{(4)},\\
\ell^{(1)}+\ell^{(2)}+\ell^{(4)},\;\ell^{(1)}+\ell^{(3)}+\ell^{(4)},\;\ell^{(2)}+\ell^{(3)}+\ell^{(4)}.
\end{matrix}
\]
We name by $\mathcal{D}_{i}\,(i=1,...,9)$ the associated operators
$\mathcal{D}_{\ell}$ in the above order of $\ell$ with setting $z_{i}:=\mathtt{a}^{\ell^{(i)}}$
and $\theta_{i}:=z_{i}\frac{\partial}{\partial z_{i}}$. They take
the following forms: 
\[
\begin{matrix}\mathcal{D}_{1}=(\theta_{1}+\theta_{2}-\theta_{4})(\theta_{1}+\theta_{3}-\theta_{4})+z_{1}(\theta_{1}+\frac{1}{2})(\theta_{1}-\theta_{4}),\\
\mathcal{D}_{2}=(\theta_{1}+\theta_{2}-\theta_{4})(\theta_{2}+\theta_{3}-\theta_{4})+z_{2}(\theta_{2}+\frac{1}{2})(\theta_{2}-\theta_{4}),\\
\mathcal{D}_{3}=(\theta_{1}+\theta_{3}-\theta_{4})(\theta_{2}+\theta_{3}-\theta_{4})+z_{3}(\theta_{3}+\frac{1}{2})(\theta_{3}-\theta_{4}),\\
\mathcal{D}_{4}=(\theta_{2}-\theta_{4})(\theta_{3}-\theta_{4})-z_{1}z_{4}(\theta_{1}+\frac{1}{2})(\theta_{2}+\theta_{3}-\theta_{4}),\;\;\;\;\\
\mathcal{D}_{5}=(\theta_{1}-\theta_{4})(\theta_{3}-\theta_{4})-z_{2}z_{4}(\theta_{2}+\frac{1}{2})(\theta_{1}+\theta_{3}-\theta_{4}),\;\;\;\;\\
\mathcal{D}_{6}=(\theta_{1}-\theta_{4})(\theta_{2}-\theta_{4})-z_{3}z_{4}(\theta_{3}+\frac{1}{2})(\theta_{1}+\theta_{2}-\theta_{4}),\;\;\;\;\\
\mathcal{D}_{7}=(\theta_{1}+\theta_{2}-\theta_{4})(\theta_{3}-\theta_{4})+z_{1}z_{2}z_{4}(\theta_{1}+\frac{1}{2})(\theta_{2}+\frac{1}{2}),\;\;\\
\mathcal{D}_{8}=(\theta_{1}+\theta_{3}-\theta_{4})(\theta_{2}-\theta_{4})+z_{1}z_{3}z_{4}(\theta_{1}+\frac{1}{2})(\theta_{3}+\frac{1}{2}),\;\;\\
\mathcal{D}_{9}=(\theta_{2}+\theta_{3}-\theta_{4})(\theta_{1}-\theta_{4})+z_{2}z_{3}z_{4}(\theta_{2}+\frac{1}{2})(\theta_{3}+\frac{1}{2}).\;\;
\end{matrix}
\]
The radical $\sqrt{dis}$ of the discriminant is given by 
\[
\begin{matrix}\begin{aligned}z_{1}z_{2}z_{3}z_{4}\times\prod_{i=1}^{3}(1+z_{i})(1+z_{i}z_{4})\times\prod_{1\leq i<j\leq3}(1-z_{i}z_{j}z_{4})\\
\;\;\times\left(1-(z_{1}z_{2}+z_{1}z_{3}+z_{2}z_{3}+z_{1}z_{2}z_{3})z_{4}-z_{1}z_{2}z_{3}z_{4}^{2}\right).
\end{aligned}
\end{matrix}
\]

~

\specialsection{\textbf{\label{sec:Appendix-birat-M}Birational map $\phi:\mathcal{M}_{3,3}\protect\dashrightarrow\mathcal{M}_{6}$}}

Here we describe the birational map $\phi:\mathcal{M}_{3,3}\dashrightarrow\mathcal{M}_{6}$
explicitly by coordinates. We follow the general definitions given
in \cite{DoOrt,Reuv}. 

\vskip0.3cm\noindent\textbf{C.1. Semi-invariants for $\mathcal{M}_{6}$.}
As in the text, let us consider an ordered configuration of six lines
$(\ell_{1}\ell_{2}...\ell_{6})$ by the corresponding sequence of
points $A=(\bm{a}_{1}\bm{a}_{2}...\bm{a}_{6})$ represented by a $3\times6$
matrix. Based on the classical invariant theory, following \cite{DoOrt},
we define the following homogeneous polynomials 
\begin{equation}
\begin{alignedat}{1}Y_{0}=Y_{0}(A) & =[1\,2\,3][4\,5\,6],\\
Y_{1}=Y_{1}(A) & =[1\,2\,4][3\,5\,6],\\
Y_{2}=Y_{2}(A) & =[1\,2\,5][3\,4\,6],\\
Y_{3}=Y_{3}(A) & =[1\,3\,4][2\,5\,6],\\
Y_{4}=Y_{4}(A) & =[1\,3\,5][2\,4\,6],\\
Y_{5}=Y_{5}(A) & =[1\,2\,3][1\,4\,5][2\,4\,6][3\,5\,6]-[1\,2\,4][1\,3\,5][2\,3\,6][4\,5\,6],
\end{alignedat}
\label{eq:Ys-appendix}
\end{equation}
where $[i\,j\,k]:=\det(\bm{a}_{i}\bm{a_{j}}\bm{a}_{k})$, and we count
the weight $Y_{0},...,Y_{4}$ by 1 and $Y_{5}$ by 2 since they are
sections of $\mathcal{L}$ and $\mathcal{L}^{\otimes2}$, respectively,
for a $GL(3,\mathbb{C})\times(\mathbb{C}^{*})^{6}$-equivariant line
bundle $\mathcal{L}$ with the fiber $\mathbb{C}_{det}\otimes\mathbb{C}_{(1^{6})}$.
The GIT quotient $GL(3,\mathbb{C})\setminus M(3,6)^{ss}/(\mathbb{C}^{*})^{6}$
coincides with the Zariski closure of the image $A\mapsto[Y_{0,}Y_{1},...,Y_{5}]$
in the weighted projective space $\mathbb{P}(1^{5},2)$, which we
have denoted by $\mathcal{M}_{6}$ in the text. 

From symmetry reason, we extend the weight one variables $Y_{0},...,Y_{4}$
to 
\[
\begin{matrix}Y_{6}=[1\,2\,6][3\,4\,5], & Y_{7}=[1\,3\,6][2\,4\,5], & Y_{8}=[1\,4\,6][2\,3\,5],\\
Y_{9}=[1\,5\,6][2\,3\,4], & Y_{10}=[1\,4\,5][2\,3\,6].
\end{matrix}
\]
These satisfy the following linear relations, which are nothing but
Pl\"ucker relations of the Grassmannian $G(3,6)$:
\begin{equation}
\begin{array}{cc}
Y_{0}-Y_{1}+Y_{2}-Y_{6}=0, & Y_{0}-Y_{6}+Y_{7}-Y_{10}=0,\\
Y_{2}-Y_{3}-Y_{7}+Y_{8}=0, & Y_{2}-Y_{3}-Y_{6}+Y_{9}=0,\\
Y_{3}-Y_{4}+Y_{6}+Y_{10}=0.
\end{array}\label{eq:AppC-rel-Ys}
\end{equation}

\vskip0.3cm\noindent\textbf{C.2. Semi-invariants for $\mathcal{M}_{3,3}$.}
When we write an ordered 6 lines in general position by $A=(\bm{a}_{1}\bm{a}_{2}...\bm{a}_{6})$
as above, the birational map (\ref{eq:birat-map-M6-M33}) may be expressed
by 
\[
A\mapsto A^{*}=(\bm{a}_{2}\times\bm{a}_{3}\,\bm{a}_{3}\times\bm{a}_{1}\,\bm{a}_{1}\times\bm{a}_{2}\,\,\bm{a}_{4}\,\,\bm{a}_{5}\,\,\bm{a}_{6})=:(\bm{c}_{1}\,\bm{c}_{2}\,\bm{c}_{3}\,\bm{a}_{4}\,\bm{a}_{5}\,\bm{a}_{6}),
\]
where $\bm{a}_{i}\times\bm{a}_{j}$ represents the exterior product
of two space vectors $\bm{a}_{i},\bm{a}_{j}$. Similarly to the case
of $A$, two algebraic groups $GL(3,\mathbb{C})$ and $(\mathbb{C}^{*})^{6}$
act on the column vectors of $A^{*}$, but with different representations.
This time, the semi-invariants are given by
\begin{equation}
\begin{alignedat}{1}X_{0}=X_{0}(A^{*}) & =(\bm{c}_{1},\bm{a}_{4})(\bm{c}_{2},\bm{a}_{5})(\bm{c}_{3},\bm{a}_{6}),\\
X_{1}=X_{1}(A^{*}) & =(\bm{c}_{1},\bm{a}_{5})(\bm{c}_{2},\bm{a}_{6})(\bm{c}_{3},\bm{a}_{4}),\\
X_{2}=X_{2}(A^{*}) & =(\bm{c}_{1},\bm{a}_{6})(\bm{c}_{2},\bm{a}_{4})(\bm{c}_{3},\bm{a}_{5}),\\
X_{3}=X_{3}(A^{*}) & =(\bm{c}_{1},\bm{a}_{4})(\bm{c}_{2},\bm{a}_{6})(\bm{c}_{3},\bm{a}_{5}),\\
X_{4}=X_{4}(A^{*}) & =(\bm{c}_{1},\bm{a}_{5})(\bm{c}_{2},\bm{a}_{4})(\bm{c}_{3},\bm{a}_{6}),\\
X_{5}=X_{5}(A^{*}) & =(\bm{c}_{1},\bm{a}_{6})(\bm{c}_{2},\bm{a}_{5})(\bm{c}_{3},\bm{a}_{4}),
\end{alignedat}
\label{eq:Xs-appendix2}
\end{equation}
with $(\bm{x},\bm{y}):=\sum_{i=1}^{3}x_{i}y_{i}$. Using these semi-invariants,
the GIT quotient of the configuration space of 3 points and 3 lines
in $\mathbb{P}^{2}$ coincides with the Zariski closure of the image
$A^{*}\mapsto[X_{0},X_{1},...,X_{5}]$ in $\mathbb{P}^{5}$, which
is the toric variety $\mathcal{M}_{3,3}$. 

\vskip0.3cm\noindent\textbf{C.3. The birational map $\phi:\mathcal{M}_{3,3}\dashrightarrow\mathcal{M}_{6}$
and $S_{6}$ actions. }The birational map (\ref{eq:birat-map-M6-M33})
can be written explicitly by eliminating the variables $\bm{a}_{i}$
from (\ref{eq:Ys-appendix}) and (\ref{eq:Xs-appendix2}). Using a
Gr\"obner basis package in symbolic manipulations, we obtain
\begin{equation}
\begin{alignedat}{1}Y_{0}= & X_{0}+X_{1}+X_{2}-X_{3}-X_{4}-X_{5},\\
Y_{1}= & X_{1}-X_{5},\\
Y_{2}= & X_{3}-X_{2},\\
Y_{3}= & X_{4}-X_{2},\\
Y_{4}= & X_{0}-X_{5},\\
Y_{5}= & X_{0}X_{1}+X_{0}X_{2}+X_{1}X_{2}-X_{3}X_{4}-X_{3}X_{5}-X_{4}X_{5},
\end{alignedat}
\label{eq:YbyX-appndix}
\end{equation}
which represents the birational map $\phi:\mathcal{M}_{3,3}\dashrightarrow\mathcal{M}_{6}$.
The inverse rational map $\phi^{-1}$ takes the following form:
\begin{equation}
\begin{alignedat}{1}X_{0}= & \frac{1}{2Y_{0}}\left(Y_{0}(\,\,Y_{0}-Y_{1}+Y_{2}+Y_{3}+Y_{4})-Y_{1}Y_{4}+Y_{2}Y_{3}+Y_{5}\right),\\
X_{1}= & \frac{1}{2Y_{0}}\left(Y_{0}(\,\,Y_{0}+Y_{1}+Y_{2}+Y_{3}-Y_{4})-Y_{1}Y_{4}+Y_{2}Y_{3}+Y_{5}\right),\\
X_{2}= & \frac{1}{2Y_{0}}\left(Y_{0}(-Y_{0}+Y_{1}-Y_{2}-Y_{3}+Y_{4})-Y_{1}Y_{4}+Y_{2}Y_{3}+Y_{5}\right),\\
X_{3}= & \frac{1}{2Y_{0}}\left(Y_{0}(-Y_{0}+Y_{1}+Y_{2}-Y_{3}+Y_{4})-Y_{1}Y_{4}+Y_{2}Y_{3}+Y_{5}\right),\\
X_{4}= & \frac{1}{2Y_{0}}\left(Y_{0}(-Y_{0}+Y_{1}-Y_{2}+Y_{3}+Y_{4})-Y_{1}Y_{4}+Y_{2}Y_{3}+Y_{5}\right),\\
X_{5}= & \frac{1}{2Y_{0}}\left(Y_{0}(\,\,Y_{0}-Y_{1}+Y_{2}+Y_{3}-Y_{4})-Y_{1}Y_{4}+Y_{2}Y_{3}+Y_{5}\right).
\end{alignedat}
\label{eq:XbyY-appendix}
\end{equation}

~

\section{\textbf{\textup{\label{sec:appendix-Singular-L}Singular lines in
$\mathcal{M}_{6}$}}}

Here, for convenience of the reader, we list the ideals for 15 lines
in $\mathcal{M}_{6}.$ Since all lines are in the hyperplane $Y_{5}=0$,
we omit $Y_{5}$ in each ideal.

\[
\begin{matrix}\left\langle Y_{2}-Y_{3},Y_{1}-Y_{4},Y_{0}+Y_{3}-Y_{4}\right\rangle , & \left\langle Y_{3},Y_{4},Y_{0}-Y_{1}+Y_{2}\right\rangle , & \left\langle Y_{2},Y_{4},Y_{0}-Y_{1}+Y_{3}\right\rangle ,\\
\left\langle Y_{1},Y_{4},Y_{0}+Y_{2}\right\rangle ,\qquad & \left\langle Y_{1},Y_{4},Y_{0}+Y_{3}\right\rangle ,\;\;\;\;\;\;\;\; & \left\langle Y_{2},Y_{3},Y_{0}-Y_{1}\right\rangle ,\qquad\;\\
\left\langle Y_{2},Y_{3},Y_{0}-Y_{4}\right\rangle ,\qquad & \left\langle Y_{1},Y_{3},Y_{0}+Y_{2}-Y_{4}\right\rangle , & \left\langle Y_{1},Y_{2},Y_{0}+Y_{3}-Y_{4}\right\rangle ;\\
\left\langle Y_{0},Y_{1}-Y_{3},Y_{2}-Y_{4}\right\rangle , & \left\langle Y_{0},Y_{3},Y_{4}\right\rangle , & \left\langle Y_{0},Y_{1}-Y_{2},Y_{3}-Y_{4}\right\rangle ,\\
\left\langle Y_{0},Y_{2},Y_{4}\right\rangle ,\qquad & \left\langle Y_{0},Y_{1},Y_{3}\right\rangle , & \left\langle Y_{0},Y_{1},Y_{2}\right\rangle .
\end{matrix}
\]
 We write these lines by $L_{1},...,L_{9};L_{10},...,L_{15}$ in order.
The first 9 lines correspond to the singular lines in $\mathcal{M}_{3,3}$
under the birational map $\phi:\mathcal{M}_{3,3}\dashrightarrow\mathcal{M}_{6}$.
As for the last 6 lines, which lie on $\left\{ Y_{0}=0\right\} $,
we can verify that the inverse images of these lines are planes in
$\mathcal{M}_{3,3}$ which are given by 
\[
P_{ijk}=\left\{ X_{0}=X_{i},\,X_{1}=X_{j},\,X_{2}=X_{k}\right\} \subset\mathcal{M}_{3,3}
\]
for 6 permutations $(ijk)$ of $(3,4,5)$. 

~

~

\section{\textbf{\textup{\label{sec:Appendix-Resolution}Properties used in
the proof of Theorem \ref{thm:Resolutions}}}}

We prove the properties used in the proof of Theorem \ref{thm:Resolutions}.
We continue using the notation introduced there. 

\begin{cla}\label{cla:flip} The following assertions hold: 

\begin{enumerate}

\item[(1)] $-(K_{\widetilde{\mathcal{M}}_{6}}+1/3\widetilde{E})$
is $f$-nef, and is numerically $f$-trivial only for $\widetilde{\ell}_{1},\dots,\widetilde{\ell}_{15}$.

\item[(2)] There exists a small contraction $\rho\colon\widetilde{\mathcal{M}}_{6}\to\overline{\mathcal{M}}_{6}$
over $\mathcal{M}_{6}$ contracting exactly $\widetilde{\ell}_{1}\cup\cdots\cup\widetilde{\ell}_{15}$.

\item[(3)] The contraction $\rho$ is a log flipping contraction
with respect to some klt pair $(\widetilde{\mathcal{M}}_{6},D)$.

\item[(4)] The flip $\widetilde{\mathcal{M}}_{6}\dashrightarrow\widetilde{\mathcal{M}}_{6}^{+}$
of $\rho$ exists and it coincides locally with the flip constructed
as in Proposition \ref{prop:lines-in-M33}.

\end{enumerate}

\end{cla}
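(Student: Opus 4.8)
The plan is to reduce all four assertions to the local model $\widetilde{\mathcal X}\to\mathcal X=\mathcal M_{3,3}^{Loc}$ studied in Section \ref{sec:MoreOn-M33}: by Proposition \ref{prop:Covering-M6} the fifteen singular points of $\mathcal M_6$, together with the resolution $f$ in their neighbourhoods, are all isomorphic to it, so the numerical data computed on $\widetilde{\mathcal X}$ can then be fed into the minimal model program globally.

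First I would establish (1) by a local intersection computation on $\widetilde{\mathcal X}$. From $N_{\widetilde\ell/\widetilde{\mathcal X}}\simeq\mathcal O_{\mathbb P^1}(-1)^{\oplus3}$ (the lemma preceding Proposition \ref{prop:flip-ell-P2}) and adjunction $\deg K_{\widetilde\ell}=K_{\widetilde{\mathcal X}}\cdot\widetilde\ell+\deg\det N_{\widetilde\ell/\widetilde{\mathcal X}}$ I get $K_{\widetilde{\mathcal X}}\cdot\widetilde\ell=-2-(-3)=1$, while $\widetilde E\cdot\widetilde\ell=(\widetilde E_x+\widetilde E_y+\widetilde E_z)\cdot\widetilde\ell=-3$ since $\widetilde E_x\cdot\widetilde\ell=\widetilde E_y\cdot\widetilde\ell=\widetilde E_z\cdot\widetilde\ell=-1$. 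Hence $(K_{\widetilde{\mathcal X}}+\tfrac13\widetilde E)\cdot\widetilde\ell=0$, and indeed the coefficient $1/3$ is forced by requiring this vanishing. The remaining $f$-contracted curve classes are the two rulings of the generic $\mathbb P^1\times\mathbb P^1$ fibres of $E_x,E_y,E_z$ over the coordinate lines and the three rulings of $D_{p_1},D_{p_2}\simeq(\mathbb P^1)^3$ (Propositions \ref{prop:BlowUp-along-xyz} and \ref{prop:BlowUp-at-p1p2}); working in the explicit affine charts described there, I would compute $K_{\widetilde{\mathcal X}}\cdot C$ and $\widetilde E\cdot C$ for each and verify $(K_{\widetilde{\mathcal X}}+\tfrac13\widetilde E)\cdot C<0$. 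Since $\overline{NE}(\widetilde{\mathcal X}/\mathcal X)$ is generated by these classes together with $[\widetilde\ell]$, this proves that $-(K+\tfrac13\widetilde E)$ is $f$-nef and numerically $f$-trivial exactly along $\widetilde\ell_1,\dots,\widetilde\ell_{15}$.

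For (2) and (3) I would introduce the perturbed boundary $D=(\tfrac13+\delta)\widetilde E$ with $0<\delta\ll1$. Since $\widetilde{\mathcal M}_6$ is smooth and $\widetilde E$ is simple normal crossing (Proposition \ref{prop:BlowUp-at-p1p2}) with all coefficients $<1$, the pair $(\widetilde{\mathcal M}_6,D)$ is klt. By (1) the nef divisor $-(K+\tfrac13\widetilde E)$ is a supporting function exhibiting $F:=\sum_i\mathbb R_{\ge0}[\widetilde\ell_i]$ as an extremal face of $\overline{NE}(\widetilde{\mathcal M}_6/\mathcal M_6)$, and $(K+D)\cdot\widetilde\ell_i=-3\delta<0$, so $F$ is $(K+D)$-negative. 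The relative cone and contraction theorems for the klt pair then yield the contraction $\rho\colon\widetilde{\mathcal M}_6\to\overline{\mathcal M}_6$ over $\mathcal M_6$ of exactly $F$; as $F$ is spanned by the disjoint curves $\widetilde\ell_i$, the exceptional locus of $\rho$ is one-dimensional and $\rho$ is small. Moreover $K+D\equiv_\rho\delta\widetilde E$, so $-(K+D)\equiv_\rho-\delta\widetilde E$ is $\rho$-ample (its degree on each $\widetilde\ell_i$ is $3\delta>0$), and $\rho$ is a log flipping contraction for $(\widetilde{\mathcal M}_6,D)$; this gives (2) and (3) simultaneously.

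Finally, for (4) the flip $\widetilde{\mathcal M}_6\dashrightarrow\widetilde{\mathcal M}_6^+$ of $\rho$ exists by the existence of flips for klt pairs, and since the flipped variety is $\mathrm{Proj}$ of the relative log-canonical algebra the flip is unique; it therefore suffices to match it over a single centre with the construction of Proposition \ref{prop:flip-ell-P2}, where blowing up $\widetilde\ell$ and contracting the second ruling of the exceptional $\mathbb P^1\times\mathbb P^2$ is the standard $(-1,-1,-1)$-flip replacing $\widetilde\ell$ by $\mathbb P^2$, on which $K+D$ is positive. The step I expect to be the main obstacle is the intersection computation underlying (1): carrying out the degree computations in the explicit charts — in particular handling the degeneration of the fibres over the two special points $p_1,p_2$ where $E_x,E_y,E_z$ meet along $\widetilde\ell$ — and confirming both the strict negativity of $(K+\tfrac13\widetilde E)$ on every fibre class and its exact vanishing along $\widetilde\ell$.
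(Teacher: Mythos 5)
Your proposal is correct in substance and reaches all four assertions, but it takes a partly different route from the paper, so a comparison is worthwhile. For (1) you compute $(K_{\widetilde{\mathcal{M}}_{6}}+\tfrac13\widetilde{E})\cdot\widetilde{\ell}=0$ directly from adjunction and $N_{\widetilde{\ell}/\widetilde{\mathcal{X}}}\simeq\mathcal{O}_{\mathbb{P}^{1}}(-1)^{\oplus3}$, then check negativity on the remaining fibre classes; the paper instead computes discrepancies ($K_{\mathcal{M}'_{6}}=f_{1}^{*}K_{\mathcal{M}_{6}}+E$, $K_{\widetilde{\mathcal{M}}_{6}}=f_{2}^{*}K_{\mathcal{M}'_{6}}+F$, $f_{2}^{*}E=\widetilde{E}+F$) to rewrite $-(K_{\widetilde{\mathcal{M}}_{6}}+\tfrac13\widetilde{E})\equiv_{f}-(\tfrac43\widetilde{E}+2F)$ and then verifies nefness in the local charts; the two computations are consistent ($K\cdot\widetilde{\ell}=1$, $\widetilde{E}\cdot\widetilde{\ell}=-3$, $F\cdot\widetilde{\ell}=2$) and require the same residual chart-by-chart checks that both you and the paper leave implicit. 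For (2)--(3) the paper applies Kawamata--Shokurov base point freeness to the $f$-nef and $f$-big divisor $-(K_{\widetilde{\mathcal{M}}_{6}}+\tfrac13\widetilde{E})$ to produce $\rho$, and then builds a separate boundary $D=\tfrac13\widetilde{E}+\tfrac1k G$ with $G\in|m\rho^{*}B-A|$ to make $\rho$ a log flipping contraction; you instead perturb the coefficient to $(\tfrac13+\delta)\widetilde{E}$ and invoke the relative cone and contraction theorems, getting both statements at once. Your version is more economical, but note that it requires identifying the extremal face $H^{\perp}\cap\overline{NE}(f)$ (for $H=-(K+\tfrac13\widetilde{E})$) with the cone actually spanned by $[\widetilde{\ell}_{1}],\dots,[\widetilde{\ell}_{15}]$; statement (1) as phrased only controls curves, not limit classes in the closed cone, so you should either argue that the relative Mori cone over each singular point is polyhedral and generated by torus-invariant curves in the (toric) local model, or fall back on the base-point-freeness argument, which sidesteps this issue entirely. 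Part (4) is the same in both treatments: existence from \cite{BCHM} (equivalently, the flip as $\mathrm{Proj}$ of the relative log canonical algebra) plus local uniqueness of flips to match with Proposition \ref{prop:flip-ell-P2}.
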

\begin{proof}
(1) Note that $K_{\mathcal{M}'_{6}}=f_{1}^{*}K_{\mathcal{M}_{6}}+E$
since $f_{1}$ is the blow-up along $\mathrm{Sing}\,\mathcal{M}_{6}$
and $\mathcal{M}_{6}$ has ODP generically along $\mathrm{Sing}\,\mathcal{M}_{6}$.
Let $F$ be the $f_{2}$-exceptional divisor. We have $K_{\widetilde{\mathcal{M}}_{6}}=f_{2}^{*}K_{\mathcal{M}_{6}}+F$
since $f_{2}$ is the blow-up at singular points isomorphic to the
vertex of the cone over the Segre $(\mP^{1})^{3}$. Therefore we have
$-(K_{\widetilde{\mathcal{M}}_{6}}+1/3\widetilde{E})=-(f_{2}^{*}f_{1}^{*}K_{\mathcal{M}_{6}}+1/3\widetilde{E}+f_{2}^{*}E+F)$.
Now note that $f_{2}^{*}E=\widetilde{E}+F$, which follows from the
local computations as in the proof of Proposition \ref{prop:BlowUp-along-xyz}
(note that, in the proof of Proposition \ref{prop:BlowUp-along-xyz},
we can read off that the divisor $E$ is defined by $u=0$ on the
chart of $\widetilde{\mathcal{X}}$ with $U=1$). Therefore we have
\[
-(K_{\widetilde{\mathcal{M}}_{6}}+1/3\widetilde{E})\equiv_{\mathcal{M}_{6}}-(4/3\widetilde{E}+2F).
\]
It is easy to see that $-(4/3\widetilde{E}+2F)$ is $f$-nef from
the local computations for $f_{1}$ and $f_{2}$.

(2) By Proposition \ref{prop:BlowUp-at-p1p2}, $(\widetilde{\mathcal{M}}_{6},1/3\widetilde{E})$
is a klt pair. Since $-(K_{\widetilde{\mathcal{X}}_{6}}+1/3\widetilde{E})$
is $f$-nef by (1), and also $f$-big, then $-(K_{\widetilde{\mathcal{M}}_{6}}+1/3\widetilde{E})$
is $f$-semiample by Kawamata-Shokurov's base point free theorem (\cite{KMM}).
Therefore, there exists a contraction $\rho\colon\widetilde{\mathcal{M}}_{6}\to\overline{\mathcal{M}}_{6}$
over $\mathcal{M}_{6}$ defined by a sufficient multiple of $-(K_{\widetilde{\mathcal{M}}_{6}}+1/3\widetilde{E})$.
Since $-(K_{\widetilde{\mathcal{M}}_{6}}+1/3\widetilde{E})$ is numerically
$f$-trivial only for $l_{1},\dots,l_{15}$ by (1), we see that $\rho$
is the desired contraction.

(3) The proof given here may look technical but more or less is standard
for experts. As we see in the proof of (1) and (2), $(\widetilde{\mathcal{M}}_{6},1/3\widetilde{E})$
is a klt pair such that $-(K_{\widetilde{\mathcal{M}}_{6}}+1/3\widetilde{E})$
is numerically $\rho$-trivial. Now let $A$, $B$ be ample divisors
on $\widetilde{\mathcal{M}}_{6}$ and $\overline{\mathcal{M}}_{6}$,
respectively. Then we see that $|m\rho^{*}B-A|\not=\emptyset$ for
$m\gg0$ since $\rho^{*}B$ is big. Let $G$ be a member of $|m\rho^{*}B-A|$.
Then $(\widetilde{\mathcal{M}}_{6},1/3\widetilde{E}+1/k\,G)$ is klt
for $k\gg0$ and $-(K_{\widetilde{\mathcal{M}}_{6}}+1/3\widetilde{E}+1/k\,G)$
is $\rho$-ample since $-(K_{\widetilde{\mathcal{M}}_{6}}+1/3\widetilde{E})$
is numerically $\rho$-trivial and $-G$ is $\rho$-ample. Setting
$D:=1/3\widetilde{E}+1/k\,G$, we obtain a desired log pair.

(4) The existence of the flip is a consequence of (3) and \cite[Cor.1.4.1]{BCHM}.
By the local uniquness of the flip \cite[Prop.5-11-1]{KMM}, it coincides
locally with the flip constructed as in Proposition \ref{prop:lines-in-M33}. 
\end{proof}
~

\newpage

\vspace{1cm}

{\footnotesize{}Shinobu Hosono}{\footnotesize \par}

{\footnotesize{}Department of Mathematics, Gakushuin University, }{\footnotesize \par}

{\footnotesize{}Mejiro, Toshima-ku, Tokyo 171-8588, Japan }{\footnotesize \par}

{\footnotesize{}e-mail: hosono@math.gakushuin.ac.jp}{\footnotesize \par}

~

{\footnotesize{}Bong H. Lian}{\footnotesize \par}

{\footnotesize{}Department of Mathematics, Brandeis University, }{\footnotesize \par}

{\footnotesize{}Waltham MA 02454, U.S.A. }{\footnotesize \par}

{\footnotesize{}e-mail: lian@brandeis.edu}{\footnotesize \par}

~

{\footnotesize{}Hiromichi Takagi}{\footnotesize \par}

{\footnotesize{}Department of Mathematics, Gakushuin University, }{\footnotesize \par}

{\footnotesize{}Mejiro, Toshima-ku, Tokyo 171-8588, Japan }{\footnotesize \par}

{\footnotesize{}e-mail: hiromici@math.gakushuin.ac.jp}{\footnotesize \par}

~

{\footnotesize{}S.-T. Yau}{\footnotesize \par}

{\footnotesize{}Department of Mathematics, Harvard University, }{\footnotesize \par}

{\footnotesize{}Cambridge MA 02138, U.S.A. }{\footnotesize \par}

{\footnotesize{}e-mail: yau@math.harvard.edu}{\footnotesize \par}

\end{document}